 \newtheorem{thm}{Theorem}[section]
 \newtheorem{cor}[thm]{Corollary}
 \newtheorem{lem}[thm]{Lemma}
 \theoremstyle{definition}
 \theoremstyle{remark}
 \numberwithin{equation}{subsection}
 \DeclareMathOperator{\mes}{mes}
 \DeclareMathOperator{\supp}{supp}
 \DeclareMathOperator{\Mod}{mod}
 \newcommand{\I}{\mathcal{I}}
 \newcommand{\N}{\mathbb{N}}
 \newcommand{\Real}{\mathbb{R}}
  \newcommand{\m}[1]{\mes\left\{#1\right\}}
 \newcommand{\set}[1]{\left\{#1\right\}}
 \newcommand{\F}[1]{\mathcal{F}_{#1}}
\begin{document}

\title[Ces\`{a}ro means of subsequences of partial sums]{Ces\`{a}ro means of subsequences of partial sums of trigonometric Fourier series}

\author{Gy\"orgy G\'at}

\address{Institute of Mathematics, University of Debrecen, H-4002 Debrecen, Pf. 400, Hungary}
\email{gat.gyorgy@science.unideb.hu}
\subjclass[2010]{42A24}
\keywords{Ces\`{a}ro means, subsequences of partial sums, trigonometric Fourier series, a.e. convergence, Zalcwasser's problem.}

\thanks{Research supported by the
Hungarian National Foundation for Scientific Research (OTKA),
grant no.  K111651 and by project EFOP-3.6.1-16-2016-00022 supported by the European Union, co-financed by the European Social Fund.}

\date{}

\begin{abstract}
In 1936 Zygmunt Zalcwasser  asked  with respect to the trigonometric system that how ``rare'' can a sequence of strictly monotone increasing integers $(n_j)$ be such that the almost everywhere relation
$\frac{1}{N}\sum_{j=1}^N S_{n_j}f \to f$ is fulfilled for each integrable function $f$. In this paper, we give an answer to this question.
It follows from the main result that this a.e. relation holds for every integrable function $f$ and  lacunary sequence $(n_j)$ of natural numbers.
\end{abstract}

\maketitle

\section{Introduction and the main theorem}

In 1936 Zalcwasser \cite{zal} asked how ``rare'' can a sequence of integers $(n_j)$ be such that
\[
\frac{1}{N}\sum_{j=1}^N S_{n_j}f \to f
  \]
a.e. for every function $f\in L^1$. In this paper, we give an answer to this question (Theorem \ref{main}).

It is of main interest in the theory of trigonometric Fourier series that how to reconstruct the function from the partial sums of its Fourier series.
It is known from Du Bois-Reymond \cite{dub} that the Fourier series of a continuous function can unboundedly diverge at some point.

A. N. Kolmogoroff \cite{kol1} constructed his famous example of a function $f\in L^1$
such that the partial sums  $S_mf(x)$ diverge unboundedly almost everywhere. In another paper \cite{kol2}
he constructed an everywhere divergent Fourier series.
In particular, it was not clear whether the Fourier series of a continuous function
can diverge everywhere. Carleson \cite{car} showed that if $f \in  L^2$, then the partial sums converge to the function almost everywhere.
The condition $f \in  L^2$ in the Carleson theorem was weakened by Hunt \cite{hunt} ($f\in L^p\, (p>1)$) and recently
Antonov \cite{ant} who proved that if $f$ is in the class $L\log^+L \log^+\log^+\log^+ L$, then  the partial sums of the Fourier series converge to the function almost everywhere again. It is a fundamental question how to reconstruct a function in  $L^1$ from its Fourier series.

Inspired by Fej\'er's results, Lebesgue  \cite{leb}  showed that for each integrable function  we have the almost everywhere convergence of the Fej\'er means $\sigma_nf = \frac{1}{n+1}\sum_{m=0}^n S_mf\to f$.

It is also of prior interest  what it can be said - with respect to this reconstruction issue - if we have only a subsequence of the partial sums.
With respect to the partial sums and the Lebesgue space $L^1$ the bad news is that in 1982 Totik  \cite{totik} showed that for each subsequence $(n_j)$ of the sequence of natural numbers there exists an integrable function $f$ such that $\sup_{j}|S_{n_j}f|=+\infty$ everywhere. Moreover,
Konyagin  \cite{konT} proved that for any increasing sequence $(n_j)$ of positive integers and any nondecreasing function
$\phi: [0, +\infty) \to [0, +\infty)$ satisfying the condition $\phi(u)=o(u\log\log u)$, there is a function
$f \in\phi(L)$ such that $\sup_{j}|S_{n_j}f|=+\infty$ everywhere. That is, a summation method is needed.

In 1936 Zalcwasser \cite{zal} asked how ``rare''  the sequence of integers $(n_j)$ can  be such that
\[
\frac{1}{N}\sum_{j=1}^N S_{n_j}f \to f
  \]
for every function $f\in L^1$.
This problem  was completely solved with respect to the trigonometric system for continuous functions and uniform convergence
in \cite{sal, tz, bel_anal, car2}. That is, if the sequence $(n_j)$ is convex, then the condition $\sup_j j^{-1/2}\log n_j <+\infty$ is necessary and
sufficient for the uniform convergence for every continuous function.

With respect to convergence almost everywhere and integrable functions the situation is more complicated. In 1936 Zalcwasser \cite{zal} proved the a.e. relation
$\frac{1}{N}\sum_{j=1}^N S_{j^2}f \to f$ for each integrable function $f$. In his paper Salem \cite[page 394]{sal} writes that this theorem of Zalcwasser is extended to $j^3$ and $j^4$ but
there is no citation in \cite{sal} about it.
Belinsky proved \cite{bel_pams} the existence of a sequence $n_j\sim \exp(\sqrt[3]{j})$ such that the relation $\frac{1}{N}\sum_{j=1}^{N}S_{n_j}f\to f$ holds a.e. for every integrable function. In this paper, Belinsky also conjectured that if the sequence $(n_j)$ is convex, then the condition $\sup_j j^{-1/2}\log n_j <+\infty$ is necessary and sufficient again. So, that would be the answer to the problem of Zalcwasser \cite{zal} in this case (trigonometric system, a.e. convergence and $L^1$ functions).
In this paper, - among others - it is proved that this is not the case.

The system of functions
\(
e^{\imath nx} \quad (n=0, \pm 1, \pm 2, \dots)
\)
($x\in \Real, \imath = \sqrt{-1}$) is called the trigonometric
system. It is orthogonal over any interval of length $2\pi$,
specifically over $T:= [-\pi, \pi)$. 
Let $f\in L^1(T)$, that is $f$ is an integrable function on $T$. The $k$th Fourier
coefficient of $f$ is
\[
\hat f(k) := \frac{1}{2\pi}\int_{T}f(x) e^{-\imath kt} dt,
\]
where $k$ is any integer number. The $n$th ($n\in\N$) partial sum
of the Fourier series of $f$ is
\[
S_nf(y) := \sum_{k=-n}^{n}\hat f(k)e^{\imath ky}.
\]
The $n$th ($n\in\N$) Fej\'er or $(C,1)$ mean of the function $f$ is
defined in the following way:
\[
\sigma_nf(y) := \frac{1}{n+1}\sum_{k=0}^n S_kf(y).
\]
It is known that
\[
\sigma_nf(y) = \frac{1}{\pi}\int_Tf(x)K_n(y-x) dx,
\]
where the function $K_n$ is known as the $n$th Fej\'er kernel; we
will now find an appropriate expression for it (see e.g. the book
of Bary \cite{B}), namely 
\[
K_n(u) = \frac{1}{2(n+1)}\left(
\frac{\sin(\frac{u}{2}(n+1))}{\sin(\frac{u}{2})} \right)^2.
\]
From this expression one can immediately derive the following
properties of the kernel. They will play an essential role later.
\[
K_n(u) \ge 0.
\]
\[
K_n(u) \le \frac{\pi^2}{2(n+1)u^2} \quad (0< |u| \le \pi).
\]

Now, we state the main theorem of the paper.
\begin{thm}\label{main} Let $f\in L^1(T)$ be a function and $(n_j)$ be a sequence of natural numbers with the property that
$n_{j+1} \ge \left(1+\frac{1}{j^{\delta}}\right)n_j$ holds for $j\in\mathbb{N}$ and for some $0<\delta < 1/2$. Then the almost everywhere relation
\[
\lim_{N\to \infty}\frac{1}{N}\sum_{j=1}^{N} S_{n_j}f = f
\]
holds on $T$.
\end{thm}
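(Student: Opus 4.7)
The starting point is the identity
\[
S_n f - \sigma_n f = n\bigl(\sigma_n f - \sigma_{n-1} f\bigr),
\]
immediate from $\sigma_n f - \sigma_{n-1}f = (n+1)^{-1}(S_n f - \sigma_{n-1}f)$. Splitting
\[
\frac{1}{N}\sum_{j=1}^{N} S_{n_j}f \;=\; \frac{1}{N}\sum_{j=1}^{N} \sigma_{n_j}f \;+\; R_N f,\qquad R_N f := \frac{1}{N}\sum_{j=1}^{N} n_j\bigl(\sigma_{n_j}f - \sigma_{n_j-1}f\bigr),
\]
the first sum converges to $f$ a.e.\ for every $f\in L^1(T)$, because $\sigma_n f\to f$ a.e.\ by Lebesgue's theorem and subsequential Ces\`aro averages of an a.e.-convergent sequence share its limit. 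The theorem therefore reduces to proving $R_N f\to 0$ a.e.

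For a trigonometric polynomial $P$ of degree $d$, an elementary Fourier-side computation gives
\[
n_j\bigl(\sigma_{n_j}P - \sigma_{n_j-1}P\bigr)(y) \;=\; \sum_{|k|\le d}\frac{|k|}{n_j+1}\hat P(k)\,e^{\imath ky} \qquad (n_j>d),
\]
which is termwise $O(d\|P\|_\infty/n_j)$, so $R_N P\to 0$ uniformly on $T$. By density of trigonometric polynomials in $L^1(T)$ and the standard Stein principle, it suffices to prove a weak-type $(1,1)$ estimate for the maximal operator $R^* f := \sup_N|R_N f|$. The convolution kernel of $R_N$ is
\[
\Psi_N(u) \;=\; \frac{1}{N}\sum_{j=1}^{N} n_j\bigl(K_{n_j}(u) - K_{n_j-1}(u)\bigr),
\]
and the plan is to produce a pointwise majorant for $\sup_N|\Psi_N(u)|$ by combining the Fej\'er-kernel bounds $0\le K_n(u)\le \pi^2/(2(n+1)u^2)$ with the closed form of $K_{n}-K_{n-1}$ afforded by $D_n=(n+1)K_n-nK_{n-1}$, and by splitting the $u$-variable into dyadic scales $|u|\asymp 2^{-\ell}$ matched against the block of indices $j$ for which $n_j\asymp 2^{\ell}$. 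The resulting majorant should feed a Calder\'on--Zygmund decomposition of $f$ at level $\lambda$ to yield $\m{y\in T : R^* f(y)>\lambda}\le C\|f\|_1/\lambda$.

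The main obstacle is precisely this weak-type bound. Spectrally the picture is transparent: the Fourier multiplier of $R_N$ at a frequency $k\ne 0$ equals $N^{-1}|k|\sum_{j:\,n_j\ge|k|}(n_j+1)^{-1}$, and the gap hypothesis $n_{j+1}\ge(1+j^{-\delta})n_j$ forces $\log n_j\gtrsim j^{1-\delta}$, so a direct estimate bounds this multiplier by $C(\log|k|)^{\delta/(1-\delta)}/N$. The assumption $\delta<1/2$ is what makes the exponent $\delta/(1-\delta)<1$, keeping the relevant auxiliary series finite and suggesting the correct decay rate for $R_N$. Translating this $L^2$-level control into a genuine weak-type $(1,1)$ bound for $R^*$, while simultaneously handling the singularity of $\Psi_N$ near $u=0$ and the cancellation of the lacunary-like exponential sum $\sum_j e^{\imath n_j u}$ for $|u|$ away from the origin, is where the delicate analytic work lies; within each dyadic block one will need a careful pairing of consecutive differences $K_{n_j}-K_{n_j-1}$ to recover the factor $j^{-\delta}$ coming from the gap hypothesis before summing in $j$.
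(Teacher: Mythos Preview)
Your proposal is not a proof but a plan, and the main step---the weak-type $(1,1)$ bound for $R^*f=\sup_N|R_Nf|$---is left entirely open. You say yourself that ``translating this $L^2$-level control into a genuine weak-type $(1,1)$ bound \dots\ is where the delicate analytic work lies''; but that delicate work is the whole theorem. Note also that $n(K_n-K_{n-1})=D_n-K_n$, so $\Psi_N=\tfrac1N\sum_j(D_{n_j}-K_{n_j})$; in particular $\Psi_N(0)\sim\tfrac1{2N}\sum_j n_j\to\infty$, so there is no integrable pointwise majorant for $\sup_N|\Psi_N|$, and the hope of feeding a simple kernel bound into a Calder\'on--Zygmund argument is misplaced. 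The spectral estimate you sketch bounds the multiplier of $R_N$ by $C(\log|k|)^{\delta/(1-\delta)}/N$, which is an $L^2$ statement with a logarithmic loss; this is very far from weak $(1,1)$, and there is no standard machinery that closes this gap for such a family of operators.

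The paper's route is substantially different and does not attempt a weak-type $(1,1)$ inequality. Instead of $\sigma_n$ it subtracts the de la Vall\'ee--Poussin mean $V_nf=\tfrac1n\sum_{l=n}^{2n-1}S_lf$, which places $S_{n_j}f-V_{n_j}f$ in the frequency annulus $|k|\in[n_j,2n_j)$. It then multiplies each term by a damping factor $\sigma_{m_j}1_{\overline{\beta F_{\beta_j}}}$ (built from the Calder\'on--Zygmund set $F$ of $f$ at level $\lambda$) and proves, for \emph{lacunary} $(n_j)$, the mixed estimate
\[
\Bigl\|\sum_{j=1}^N (S_{n_j}f-V_{n_j}f)\,\sigma_{m_j}1_{\overline{\beta F_{\beta_j}}}\Bigr\|_2^2 \le C_\beta\,N\log^5(N{+}1)\,\|f\|_1\,\lambda,
\]
where the equality with $\sum_j\|\cdot\|_2^2$ comes from genuine orthogonality of the annuli. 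A separate ``replacement'' lemma shows that inserting the damping factors costs only a set of measure $\le C\sqrt{\|f\|_1/\lambda}$. The general hypothesis $n_{j+1}\ge(1+j^{-\delta})n_j$ enters by splitting $\{1,\dots,N\}$ into about $K_0\sim N^{\delta}$ arithmetic progressions, along each of which the subsequence is lacunary with ratio $>2.5$; applying the orthogonality lemma blockwise and summing over blocks yields $\m{\sup_N|T_Nf|>\lambda}\le C_\delta\sqrt{\|f\|_1/\lambda}$, which is weaker than weak $(1,1)$ but still suffices, via density of polynomials, for a.e.\ convergence. None of these ideas---the de la Vall\'ee--Poussin subtraction giving disjoint spectra, the multiplicative damping by Fej\'er means of indicator functions, the blockwise reduction to lacunary subsequences, or the acceptance of a $\sqrt{\|f\|_1/\lambda}$ estimate---appears in your outline.
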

\begin{cor}\label{main_cor} Let $(n_j)$ be a lacunary sequence of natural numbers. Then the almost everywhere relation
\(\lim_{N\to \infty}\frac{1}{N}\sum_{j=1}^{N} S_{n_j}f = f \)
holds for every $f\in L^1(T)$.
\end{cor}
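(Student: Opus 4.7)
The plan is to deduce Corollary \ref{main_cor} directly from Theorem \ref{main} by verifying that, up to a harmless modification of finitely many initial terms, every lacunary sequence meets the growth hypothesis $n_{j+1} \ge (1 + 1/j^\delta) n_j$ of the theorem for a suitable $\delta \in (0, 1/2)$.

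Fix $\delta \in (0, 1/2)$ and let $q := \inf_j n_{j+1}/n_j > 1$ denote the lacunary ratio. Choose $J_0 \in \N$ with $1 + 1/J_0^\delta \le q$, i.e.\ $J_0 \ge (q-1)^{-1/\delta}$. The point is that for every $k \ge 1$ the original ratio satisfies $n_{k+1}/n_k \ge q \ge 1 + 1/(J_0 + k)^\delta$, so the theorem's condition is already in force past position $J_0$. The difficulty lies in the initial segment $j < J_0$, where $1 + 1/j^\delta$ can exceed $q$; this we cure by prepending $J_0$ auxiliary integers. Since the Cesàro average is insensitive to modification of any fixed finite block (such a change contributes only a fixed a.e.-finite function divided by $N$), we may first discard finitely many leading terms of $(n_j)$ and assume $n_1 \ge C$ for any preassigned constant $C$ depending only on $q$ and $\delta$. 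Taking $C := 2\prod_{j=1}^{J_0}(1 + 1/j^\delta)$, we define $\tilde n_1 = 1$ and inductively $\tilde n_{j+1} := \lceil \tilde n_j(1 + 1/j^\delta)\rceil$ for $1 \le j \le J_0 - 1$; finally set $\tilde n_{J_0 + k} := n_k$ for $k \ge 1$. By construction, $(\tilde n_j)_{j \ge 1}$ is a strictly increasing sequence of natural numbers with $\tilde n_{j+1}/\tilde n_j \ge 1 + 1/j^\delta$ for every $j \ge 1$; at the junction $j = J_0$ the inequality $n_1/\tilde n_{J_0} \ge 1 + 1/J_0^\delta$ follows from the lower bound on $n_1$.

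Theorem \ref{main} applied to $(\tilde n_j)$ yields $\frac{1}{N}\sum_{j=1}^N S_{\tilde n_j}f \to f$ almost everywhere. For $N > J_0$,
\[
\frac{1}{N}\sum_{j=1}^N S_{\tilde n_j}f \;=\; \frac{1}{N}\sum_{j=1}^{J_0} S_{\tilde n_j}f \;+\; \frac{1}{N}\sum_{k=1}^{N-J_0} S_{n_k}f,
\]
and the first summand tends to $0$ a.e.\ as $N \to \infty$ (a fixed a.e.-finite function divided by $N$). Hence $\frac{1}{N}\sum_{k=1}^{N-J_0} S_{n_k}f \to f$ a.e.; absorbing the innocuous factor $N/(N-J_0) \to 1$ yields the Cesàro-mean convergence $\frac{1}{M}\sum_{k=1}^M S_{n_k}f \to f$ a.e.\ with $M = N - J_0 \to \infty$, as required.

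The only subtlety is the quantitative bookkeeping at the junction between the prepended block and the original tail, which demands that $n_1$ be sufficiently large in comparison with $\prod_{j=1}^{J_0}(1 + 1/j^\delta)$; this is secured a priori by the harmless truncation of finitely many leading terms, after which the remainder of the argument is straightforward.
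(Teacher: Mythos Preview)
Your argument is correct and is precisely the intended deduction from Theorem~\ref{main}; the paper itself states the corollary without proof, treating it as an immediate consequence. You have carefully supplied the one detail that is not entirely automatic: when the lacunary ratio $q$ lies in $(1,2)$, the growth condition $n_{j+1}\ge(1+j^{-\delta})n_j$ of Theorem~\ref{main} fails at $j=1$ for every $\delta>0$, and you repair this by prepending a fixed finite block of auxiliary terms and invoking the insensitivity of Ces\`aro averages to finite modifications. (If $q\ge 2$ the hypothesis of Theorem~\ref{main} is satisfied directly for any $\delta\in(0,1/2)$, and no modification is needed.)

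One minor quantitative remark: the explicit choice $C=2\prod_{j=1}^{J_0}(1+j^{-\delta})$ may be slightly too small to guarantee $n_1/\tilde n_{J_0}\ge 1+J_0^{-\delta}$, since the ceilings in the recursion $\tilde n_{j+1}=\lceil\tilde n_j(1+j^{-\delta})\rceil$ can contribute an additive $+1$ at each step and the induction $\tilde n_j\le 2\prod_{i<j}(1+i^{-\delta})$ does not quite close. This is harmless: it suffices to take $C$ to be any constant exceeding $\tilde n_{J_0}(1+J_0^{-\delta})$, a quantity depending only on $J_0$ (hence only on $q$ and $\delta$), and the rest of the argument goes through unchanged.
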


We remark, that the corresponding version of Corollary \ref{main_cor} for the Walsh-Paley system can be found in \cite{gatJAT}. On the other hand, no part of the paper \cite{gatJAT} could be used here, but the fact that the author of \cite{gatJAT} could prove Corollary \ref{main_cor} for the Walsh system was an inspiration for  this article.

We say some preliminary words about the main ideas of the proof of Theorem \ref{main}.
Let $V_nf :=\frac{1}{n}\sum_{j=n}^{2n-1}S_jf$ be the $n$th  de la Vall\'ee-Poussin mean of the integrable function $f$.
Instead of the $(C,1)$ means of $S_{n_j}f$ we can investigate $(C,1)$ means of $S_{n_j}f-V_{n_j}f$, since for the de la Vall\'ee-Poussin means $V_{n_j}f$
we have the a.e. convergence $V_{n_j}f\to f$. We define some sequence of sets $\beta F_{\beta_j}$ and the whole
sections $3,4$ and $5$ are dedicated to prove the ``orthogonality'' lemma (Lemma \ref{orthogonality}). That is,
\[
\left\|\sum_{j=1}^{N}\left( S_{n_j}f-V_{n_j}f\right)  \left(\sigma_{m_j}1_{\overline{\beta F_{\beta_j}}}\right)\right\|_2^2
 \le C_{\beta} N\log^5 (N+1)\|f\|_1\lambda,
\]
where constant $C_{\beta}$ depends only on $\beta$ and it is uniform in $f,\lambda, N$ and $(n_j)$. To be honest, ``orthogonality'' lemma (Lemma \ref{orthogonality}) is proved for lacunary sequences $(n_j)$ (with quotient greater than $2.5$), but this fact will be
got around in the proof of the main theorem (Theorem \ref{main}).
In the ``replacement'' lemma (Lemma \ref{replacement}) we basically prove that the sequences
\[
\frac{1}{N}\sum_{j=1}^{N}\left( S_{n_j}f-V_{n_j}f\right)  \left(\sigma_{m_j}1_{\overline{\beta F_{\beta_j}}}\right),
\quad
\frac{1}{N}\sum_{j=1}^{N}
\left( S_{n_j}f-V_{n_j}f\right)
\]
are a.e. equiconvergent. This will lead to the a.e. convergence
$\frac{1}{N}\sum_{j=1}^{N}\left( S_{n_j}f-V_{n_j}f\right)\to 0$.
Roughly speaking, the proof of the main theorem is based on the ``orthogonality'' and  ``replacement'' lemmas.
In the sequel, we start the process on the way to the ``orthogonality'' lemma.

\section{A Decomposition Lemma}

The dyadic subintervals of $T$ are defined in the following way.
\[
\I_0 := \set{T}, \quad \I_1 := \set{[-\pi,0), [0,\pi)},
\]
\[
\I_2 :=  \set{[-\pi,-\pi/2), [-\pi/2,0), [0,\pi/2), [\pi/2,\pi)},
\dots
\]

\[
\I := \bigcup_{n=0}^{\infty}\I_n.
\]

The elements of $\I$ are said to be  dyadic intervals. If $F\in
\I$, then there exists a unique $n\in \mathbb{N}$ such that $F\in
\I_n$, and consequently $\mes(F)= |F| = \frac{2\pi}{2^n}$. Each $\I_n$ has
$2^n$ disjoint elements ($n\in\N$).

The following Calderon-Zygmund type decomposition
lemma can be found for instance in \cite[page 17]{stein} or  \cite[page 90]{SWS} (more precisely, in a slightly different way)
or in \cite{gatAMAPN} (with an elementary proof). This will play a prominent role in the proof of
the main theorem of this paper.

\begin{lem}\label{cz}
Let $f\in L^1(T)$, and $\lambda>\|f\|_1/(2\pi)$ . Then there
exists a sequence of integrable functions $(f_i)$ such that
\[
f = \sum_{i=0}^{\infty}f_i \quad \mbox{a.e.},
\]
\[
\|f_0\|_{\infty} \le 2\lambda, \quad \|f_0\|_{1} \le 2\|f\|_{1},
\quad \mbox{and}
\]
\[
\supp f_i \subset I^{i}, \quad \mbox{where}
\]
$I^{i}\in \I$ are disjoint dyadic intervals depending only on $|f|$ (and $\lambda$),
\[
\mes (I^{i}) = \frac{2\pi}{2^{k_i}} \quad \mbox{for some}
\]
$k_i \ge 1\,  (i\ge 1)$. Moreover,
$\int_{T} f_i(x) dx=\int_{I^i} f_i(x) dx=0\, (i\ge 1)$,
\[
\lambda < \frac{1}{\mes(I^{i})}\int_{I^{i}}|f|\le 2\lambda, \quad\frac{1}{\mes(I^{i})}\int_{I^{i}}|f_i| \le 4\lambda
\]
and for the union
\[
F := \bigcup_{i=1}^{\infty}I^{i}
\]
of the disjoint dyadic intervals $I^{i}$  ($i\ge 1$)
we have $\mes(F) = |F|  \le \|f\|_{1}/\lambda$.
\end{lem}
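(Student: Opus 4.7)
The plan is to prove Lemma \ref{cz} by the classical dyadic stopping-time construction adapted to $T=[-\pi,\pi)$. Since the hypothesis $\lambda>\|f\|_1/(2\pi)$ means that the average of $|f|$ over the root interval $T\in\I_0$ is strictly less than $\lambda$, I can start the recursion at the top and descend.

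First I would carry out the stopping-time selection. Starting from $T$, I bisect each dyadic interval $J$ into its two children in $\I_{n+1}$ and, for every child $J'$, compare the average $\frac{1}{|J'|}\int_{J'}|f|$ to $\lambda$: if it exceeds $\lambda$, I mark $J'$ as a \emph{selected} interval and stop subdividing it; otherwise I continue bisecting inside $J'$. Let $(I^i)_{i\ge 1}$ be the resulting (at most countable) collection of selected dyadic intervals; by construction they are pairwise disjoint and each $I^i\in\I_{k_i}$ with $k_i\ge 1$. The selection rule immediately gives $\lambda<\frac{1}{|I^i|}\int_{I^i}|f|$, and since the unique parent $\widehat{I^i}\in\I_{k_i-1}$ of $I^i$ was \emph{not} selected, we also get
\[
\frac{1}{|I^i|}\int_{I^i}|f|\le \frac{|\widehat{I^i}|}{|I^i|}\cdot \frac{1}{|\widehat{I^i}|}\int_{\widehat{I^i}}|f|\le 2\lambda.
\]
The bound $|F|\le\|f\|_1/\lambda$ for $F=\bigcup_i I^i$ follows at once by summing $|I^i|\le\lambda^{-1}\int_{I^i}|f|$ and using disjointness.

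Next I would define the decomposition. Set, for $i\ge 1$,
\[
f_i(x):=\Bigl(f(x)-\tfrac{1}{|I^i|}\!\int_{I^i}\! f\Bigr)\mathbf 1_{I^i}(x),\qquad f_0:=f-\sum_{i\ge 1}f_i,
\]
so that $f=\sum_{i\ge 0}f_i$ a.e., $\supp f_i\subset I^i$, and $\int_{I^i}f_i=0$. The $L^1$-average bound $\frac{1}{|I^i|}\int_{I^i}|f_i|\le 4\lambda$ is obtained from the triangle inequality and the two estimates on the average of $|f|$ over $I^i$ just established. Explicitly, $f_0$ equals $f$ on $T\setminus F$ and equals the constant $\tfrac{1}{|I^i|}\int_{I^i}f$ on each $I^i$.

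To verify the bounds on $f_0$, on each $I^i$ we have $|f_0|\le\frac{1}{|I^i|}\int_{I^i}|f|\le 2\lambda$; on $T\setminus F$ every dyadic interval containing a point $x$ was rejected by the stopping rule, so the dyadic averages of $|f|$ at $x$ are all $\le\lambda$, and Lebesgue's differentiation theorem (in its dyadic form on $T$) yields $|f(x)|\le\lambda$ for a.e.\ $x\in T\setminus F$. Hence $\|f_0\|_\infty\le 2\lambda$. For the $L^1$-bound, integrating the pointwise inequality $|f_0|\le|f|$ on $T\setminus F$ together with $\int_{I^i}|f_0|=\bigl|\int_{I^i}f\bigr|\le\int_{I^i}|f|$ on each $I^i$ gives $\|f_0\|_1\le\|f\|_1\le 2\|f\|_1$.

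I expect no serious obstacle: everything reduces to the stopping-time bookkeeping and an application of Lebesgue differentiation along the dyadic filtration. The only subtle point worth double-checking is that the selected intervals $I^i$ depend on $|f|$ and $\lambda$ only (not on finer features of $f$), which is automatic because the stopping criterion uses only the averages of $|f|$; and that the top-level interval $T$ is itself never selected, which is exactly where the hypothesis $\lambda>\|f\|_1/(2\pi)$ is used to start the recursion, guaranteeing $k_i\ge 1$.
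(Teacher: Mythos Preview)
Your argument is correct and is precisely the standard dyadic stopping-time construction; the explicit formulas you write for $f_i$ and $f_0$ coincide with the ones the paper records in the remark following the lemma. The paper itself does not supply a proof of this lemma but cites it from standard sources (Stein, Schipp--Wade--Simon, and an elementary proof by the author in another paper), so there is nothing further to compare.
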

Also using the notation with respect to Lemma \ref{cz} we define $\F{} := \set{I^{i} : i=1,\dots,}$. That is, $\F{}$ is the set of dyadic intervals, whose union  is the set $F$.
Moreover, for any dyadic interval $I$, $I\in \F{}$ if and only if $|I|^{-1}\int_{I}|f| >\lambda$ and $|J|^{-1}\int_{J}|f| \le \lambda$ for every dyadic interval $J\supsetneq I$.
For any positive number $\beta>0$ let $\F{\beta} := \set{I\in \F{} : |I|> \beta}$, $F_{\beta} := \cup_{I\in \F{\beta}} I$.
Besides, remark that $f_i = (f - |I^i|^{-1}\int_{I^i}f)1_{I^i}$ ($i\ge 1$), $f_0 = f1_{\bar F} + \sum_{i=1}^{\infty}(|I^i|^{-1}\int_{I^i}f)  1_{I^i}$ ($1_{\bar F}$ is the characteristic function of the complement of  $F$).

Let $I_n(x)$ be the dyadic interval of measure $2\pi/2^n$ with $x\in I_n(x)$.
For any dyadic interval $I\in \mathcal{I}$ and integer $i$ let $I^{(i)} := I + |I|i$ be the $i$th neighbour
of $I$ (the addition $I+|I|i$ is done  modulo $T=[-\pi, \pi)$, that is a circle represents $T$). For instance $I^{0} = I$ and
for $I=I_n(x)\in\mathcal{I}$, $I^{(1)} = I_n^{(1)}(x) := I_n(x)+ 2\pi/2^n, I^{(-1)} = I_n^{(-1)}(x) := I_n(x)- 2\pi/2^n$
are the right and left adjacent (modulo $T$) dyadic intervals of $I_n(x)$ with the same measure. We also use the notation $I^+ := I^{(1)}, I^- := I^{(-1)}$.

For any $I_n(x)\in\mathcal{I}$ let $3I_n(x) := \bigcup_{i=-1}^1 I^{(i)}_n(x)$ be the tripled of $I_n(x)$. That is, $3I_n(x)$ is an interval with the same center as $I_n(x)$ and  with the tripled measure of $I_n(x)$. Similarly, we define
\[
5I_n(x) := \bigcup_{i=-2}^2 I^{(i)}_n(x)
\]
and so on. That is, we can define $7I_n(x), 9I_n(x)\dots$.
Moreover, use the notation for any odd integer $\gamma$
\[
\gamma F_{\beta} := \bigcup_{I\in \F{\beta}}\gamma I, \quad \gamma F :=  \bigcup_{I\in \F{}} \gamma I.
\]

We need a preliminary lemma:
\begin{lem}\label{gammaIgammaJ} Let $\gamma$ be an odd natural number, $J$ be a dyadic interval. Then we  have
\[
\sum_{I^{(i)}\subset J, I\in \F{}} |I^{(i)}| \le C_{\gamma}|J| \quad \mbox{for}\quad |i| \le \frac{\gamma-1}{2}
\]
and
\[
\sum_{I,J\in \F{}} |\gamma I\cap \gamma J| \le C_{\gamma}|F|.
\]
\end{lem}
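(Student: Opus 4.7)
My plan is to prove the two inequalities by direct geometric/combinatorial arguments, with the second one reduced to the first via a neighbor decomposition of $\gamma I$.

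For the first inequality I would argue as follows. If $I\in \F{}$ satisfies $I^{(i)}\subset J$, then $|I|=|I^{(i)}|\le |J|$, and from $I = I^{(i)} - i|I|$ we get $I\subset J - i|I|$. Since $|i|\le(\gamma-1)/2$ and $|I|\le |J|$, the translate $J - i|I|$ is a shift of $J$ by at most $\tfrac{\gamma-1}{2}|J|$, hence is contained in the extended interval $\gamma J$. Therefore every such $I$ lies inside $\gamma J$. Because the intervals of $\F{}$ are pairwise disjoint, summing their measures gives
\[
\sum_{I^{(i)}\subset J,\ I\in\F{}}|I^{(i)}| \;=\; \sum_{I^{(i)}\subset J,\ I\in\F{}}|I|\;\le\;|\gamma J| \;=\; \gamma|J|,
\]
so $C_\gamma=\gamma$ works.

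For the second inequality I would use that the elementary neighbors $I^{(k)}$ (for fixed $I$, $|k|\le(\gamma-1)/2$) are pairwise disjoint, and likewise for $J^{(l)}$. Thus
\[
|\gamma I\cap\gamma J|\;=\;\sum_{|k|,|l|\le(\gamma-1)/2}\bigl|I^{(k)}\cap J^{(l)}\bigr|,
\]
and I would analyze each term by splitting on the relation between $|I|$ and $|J|$. Since $I^{(k)}$ and $J^{(l)}$ are themselves dyadic intervals, either they are nested or they are disjoint.

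\textit{Case $|I|=|J|$.} Non-empty intersection forces $I^{(k)}=J^{(l)}$, equivalently $J = I + (k-l)|I|$, which determines $J$ uniquely from $I$ and $(k,l)$; the contribution for fixed $(k,l)$ is at most $\sum_{I\in\F{}}|I|\le|F|$.

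\textit{Case $|I|>|J|$.} Non-empty intersection gives $J^{(l)}\subset I^{(k)}$, with contribution $|J|$. Applying the first inequality \emph{with $I^{(k)}$ playing the role of $J$ and $l$ playing the role of $i$} (legitimate because $I^{(k)}$ is itself a dyadic interval) yields, for each fixed $I$ and $(k,l)$,
\[
\sum_{J\in\F{},\,J^{(l)}\subset I^{(k)}}|J|\;\le\;\gamma|I^{(k)}|\;=\;\gamma|I|,
\]
and summing over $I\in\F{}$ produces at most $\gamma|F|$.

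\textit{Case $|I|<|J|$.} Symmetric to the previous case.

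Finally, summing over the $O(\gamma^2)$ admissible pairs $(k,l)$ bounds the full double sum by $C_\gamma |F|$ for a constant depending only on $\gamma$ (of order $\gamma^3$). The only delicate points I expect are the precise verification of the containment $J-i|I|\subset \gamma J$ and the nested/disjoint dichotomy for shifted dyadic intervals on the circle; once these geometric facts are in place, both inequalities reduce to elementary disjointness bookkeeping in $\F{}$.
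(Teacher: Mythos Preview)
Your proposal is correct and follows essentially the same approach as the paper: for the first inequality you show $I\subset\gamma J$ via the translate containment and use disjointness of $\F{}$, and for the second you decompose $\gamma I\cap\gamma J$ into neighbor intersections $I^{(k)}\cap J^{(l)}$, use the dyadic nested/disjoint dichotomy, and reduce to the first inequality. The paper does exactly this (it merges your $|I|=|J|$ case into $|I|\le|J|$, and writes the neighbor decomposition as a union rather than asserting measure-equality, which is the safer formulation on the circle; you should use $\le$ there rather than $=$, but this only helps the bound).
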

\begin{proof}
   Without loss of generality we can suppose that $i\ge 0$. If $I^{(i)}\subset J$, then $|I^{(i)}|\le  |J|$ and $I = I^{(0)} = (I^{(i)})^{(-i)} \subset\bigcup_{j=0}^{i} J^{(-j)}\subset \gamma J$. The sets $I\in \F{}$ are disjoint and consequently
  \[
  \sum_{I^{(i)}\subset J, I\in \F{}} |I^{(i)}|  = \sum_{I^{(i)}\subset J, I\in \F{}} |I| \le
  \sum_{I\subset \gamma J, I\in \F{}} |I| \le |\gamma J| = \gamma |J|.
  \]
  Now, turn our attention to $\sum_{I,J\in \F{}} |\gamma I\cap \gamma J|$.
  Suppose that we take the sum for pairs $I,J$ satisfying $|I|\le |J|$ and take  fixed $i,j\in \{1/2-\gamma/2, \dots, \gamma/2-1/2\}$.

  The intersection $I^{(i)}\cap J^{(j)}$ of dyadic intervals  can  be different from $\emptyset$ if and only if
  $I^{(i)}\subset J^{(j)}$. By the first inequality of Lemma \ref{gammaIgammaJ} proved already we have
  \[
  \sum_{J\in \F{}}\sum_{I\in \F{}, |I|\le |J|}|I^{(i)}\cap J^{(j)}| \le C_{\gamma} \sum_{J\in \F{}}|J| \le C_{\gamma} |F|.
  \]
Taking into account that $\gamma I\cap \gamma J = \bigcup_{i,j=1/2-\gamma/2}^{\gamma/2-1/2} (I^{(i)}\cap J^{(j)})$ and the fact that
the case $|J|\le |I|$ can be discussed in the same way  the proof of Lemma \ref{gammaIgammaJ} is  complete.
\end{proof}

\section{The integral on $\gamma F_{\epsilon}$}

For $f\in L^1$ and $y\in T$ set a version of  Hilbert transforms of $f$ at $y$ for $n\in \N, |n| := \lfloor\log_2 n \rfloor$ (that is, $2^{|n|}\le n < 2^{|n|+1}$) as:
\begin{equation}\label{hilbert_def}
H_n f(y) := \int_{T\setminus 3I_{|n|}(y)} f(x)\cot\left(\frac{y-x}{2}\right) dx.
\end{equation}

The transform $H_n$ is of type $(L^p, L^p)$ for any $1<p<\infty$ and it is also of weak type $(L^1, L^1)$ as it can be seen in the following way.
Let $\delta = |I_{|n|}(y)| = 2\pi/2^{|n|}$. Then  $(y-\delta, y+\delta)\subset 3I_{|n|}(y)$. Then we have
\[
\begin{split}
& |H_n f(y)| \le \left|\int_{T\setminus (y-\delta, y+\delta)} f(x)\cot\left(\frac{y-x}{2}\right) dx\right| + Cn\int_{3I_{|n|\setminus}(y)\setminus (y-\delta, y+\delta)} |f(x)| dx
\\
& \le \sup_{\epsilon>0}\left|\int_{T\setminus (y-\epsilon, y+\epsilon)} f(x)\cot\left(\frac{y-x}{2}\right) dx\right| + \frac{C}{\delta}\int_{y-2\delta}^{y+2\delta}|f(x)| dx.
\end{split}
\]
Since the maximal (``ordinary'') Hilbert transform (\cite[Chapter 3]{BS}) and the integral mean value operator are of type $(L^p, L^p)$ for any $1<p<\infty$ and they are also of weak type $(L^1, L^1)$, then so is the ``newly defined'' Hilbert transform.

The first lemma needed is:
\begin{lem}\label{gammaFjH2}
Let $\beta > \gamma >5$ be odd integers, $\epsilon>0$, $n,m\in\N$ with $n\le 100m$ and $f\in L^1(T), \lambda >\|f\|_1/(2\pi)$. Then the inequality
\[
\int_{\gamma F_{\epsilon}}|H_n f(y)|^2 |\sigma_m 1_{\overline{\beta F_{\epsilon}}}(y)|^2 dy \le C_{\beta, \gamma}\|f\|_1\lambda
\]
holds, where the constant $C_{\beta, \gamma}$ can depend only on $\beta$ and $\gamma$ (it is uniform in $f, n, m, \epsilon, \lambda$) and $1_{\overline{\beta F_{\epsilon}}}$ is the characteristic function of the complement
of $\beta F_{\epsilon}$.
\end{lem}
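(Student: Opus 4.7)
The plan is to apply the Calder\'on--Zygmund decomposition of Lemma~\ref{cz} at level $\lambda$, writing $f=f_0+b$ with $b=\sum_{I\in\F{}}f_I$, and to handle the two pieces by very different techniques. For the good part $f_0$, the trivial bound $\abs{\sigma_m 1_{\overline{\beta F_\epsilon}}}\le 1$ together with the $(L^2,L^2)$-boundedness of $H_n$ recalled just before the lemma yields
\[
\int_{\gamma F_\epsilon}\abs{H_nf_0}^2\abs{\sigma_m 1_{\overline{\beta F_\epsilon}}}^2\,dy\le \norm{H_nf_0}_2^2\le C\norm{f_0}_\infty\norm{f_0}_1\le C\lambda\norm{f}_1,
\]
using $\norm{f_0}_\infty\le 2\lambda$ and $\norm{f_0}_1\le 2\norm{f}_1$ from Lemma~\ref{cz}. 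The whole difficulty therefore lies in estimating the bad part $b$.

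Two ingredients will drive the bad-part estimate. The first is a pointwise decay for the weight $w:=\sigma_m 1_{\overline{\beta F_\epsilon}}$ on $\gamma F_\epsilon$: if $y\in\gamma I$ with $I\in\F{\epsilon}$, then $\beta I\subset \beta F_\epsilon$, so $1_{\overline{\beta F_\epsilon}}\le 1_{\overline{\beta I}}$, and since $\operatorname{dist}(\gamma I,\,T\setminus\beta I)\ge \tfrac{\beta-\gamma}{2}|I|$, the kernel bound $K_m(u)\le \pi^2/(2(m+1)u^2)$ yields
\[
\abs{w(y)}\le C_{\beta,\gamma}\min\!\Bigl(1,\;\frac{1}{(m+1)|I|}\Bigr).
\]
The second ingredient is an atom-wise bound for $H_nf_J$: using $\int f_J=0$, for $y$ with $3I_{|n|}(y)\cap J=\emptyset$ one rewrites
\[
H_nf_J(y)=\int_J f_J(x)\Bigl[\cot\tfrac{y-x}{2}-\cot\tfrac{y-c_J}{2}\Bigr]dx,
\]
where $c_J$ is the center of $J$; combined with $\abs{(\cot)'(u)}\lesssim u^{-2}$ and $\norm{f_J}_1\le 4\lambda|J|$ from Lemma~\ref{cz}, this gives the far estimate $\abs{H_nf_J(y)}\le C\lambda|J|^2/d(y,J)^2$ whenever $d(y,J)\gtrsim|J|$. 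For $J\subset 3I_{|n|}(y)$ the integrand vanishes identically and $H_nf_J(y)=0$; on the remaining near atoms the bound $\abs{\cot((y-x)/2)}\lesssim 2^{|n|}\lesssim m$ on the surviving integration range combined with $\norm{f_J}_1\le 4\lambda|J|$ gives $\abs{H_nf_J(y)}\lesssim m\lambda|J|$.

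The assembly step expands $\abs{H_nb(y)}^2\le \Bigl(\sum_J \abs{H_nf_J(y)}\Bigr)^2$, splits the double sum into far/far, far/near, and near/near interactions, integrates against $\abs{w(y)}^2$ over each $\gamma I$ with $I\in\F{\epsilon}$, and sums over $I$. The far contributions telescope via the decay $|J|^2/d(y,J)^2$ and the geometric sums in Lemma~\ref{gammaIgammaJ}; the near contributions, where $J$ contains or is comparable in size and location to $y\in \gamma I$, are absorbed by the weight decay $\abs{w}^2\lesssim ((m+1)|I|)^{-2}$, and this is precisely where the hypothesis $n\le 100m$ enters, matching the scale $2^{-|n|}$ of the $H_n$-cutoff to the scale $1/m$ of the Fej\'er weight. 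Finally, $\sum_{I\in\F{\epsilon}}|I|\le |F|\le \norm{f}_1/\lambda$ from Lemma~\ref{cz} converts the geometric gain into the target factor $\norm{f}_1\lambda$. The main obstacle I anticipate is the bookkeeping in the near-diagonal case: for $y\in\gamma I$ one must simultaneously handle contributions from the (at most one) atom $f_J$ actually containing $y$, from atoms that only touch the boundary of $3I_{|n|}(y)$, and from atoms $J$ with $|J|\ll |I|$ inside $\gamma I$; one needs to verify that the cancellation between $m\lambda|I|$ from the Hilbert side and $1/(m|I|)$ from the Fej\'er side leaves precisely the $\lambda\norm{f}_1$ scaling with no logarithmic loss.
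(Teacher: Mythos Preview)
Your outline follows the same strategy as the paper: Calder\'on--Zygmund decomposition, trivial treatment of $f_0$ via the $(L^2,L^2)$ bound for $H_n$, and for the bad part $f^0=\sum_J f_J$ expand the square as a double sum over $J,K\in\F{}$ and split according to whether each of $J,K$ is ``near'' or ``far'' from $y$. The paper organises this as four pieces $A^1,\dots,A^4$ according to whether $y\in\gamma J$ and/or $y\in\gamma K$, and integrates directly over $\gamma F_\epsilon$ rather than summing separately over $I\in\F{\epsilon}$ as you propose.

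There is one genuine gap in your near/near analysis. You state the weight decay as $|w(y)|\lesssim 1/((m+1)|I|)$ for $y\in\gamma I$ with $I\in\F{\epsilon}$, and combine it with the near bound $|H_nf_J(y)|\lesssim m\lambda|J|$ to get a pointwise contribution of order $\lambda^2|J||K|/|I|^2$. This blows up when a near atom $J$ satisfies $|J|\gg|I|$, and nothing prevents that: a large $J\in\F{}$ can sit right next to a small $I\in\F{\epsilon}$. The paper's fix (inequality (\ref{sigma<16/mIJK})) is the observation that if $|J|\ge|I|>\epsilon$ then automatically $J\in\F{\epsilon}$, so the same Fej\'er-kernel argument with $J$ in place of $I$ gives $|w(y)|\lesssim 1/(m|J|)$; hence in fact
\[
|w(y)|\le \frac{C_{\beta,\gamma}}{m\,\max(|I|,|J|,|K|)}\qquad\text{for }y\in\gamma F_\epsilon\cap\gamma J\cap\gamma K,
\]
and then $|J||K|/\max(|J|,|K|)^2\le 1$ closes the estimate via Lemma~\ref{gammaIgammaJ}. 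Without this sharpening your near/near term does not sum. Two smaller points: the far/far piece ($A^4$ in the paper) is not handled by Lemma~\ref{gammaIgammaJ} but by the elementary separation $\sin^{-2}a\,\sin^{-2}b\lesssim(\sin^{-2}a+\sin^{-2}b)/\sin^2(a-b)$ for $J\ne K$ and a direct $|J|^4/d^4$ estimate for $J=K$; and in the mixed case ($A^2,A^3$) only one power of the weight is spent (using $|w|^2\le|w|$), the other factor coming from a maximal-interval covering of $\gamma F_\epsilon\cap\gamma J$.
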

\begin{proof}
Without loss of generality, in order not to write too many conjugate signs, we suppose that $f$ is a real function.
  By applying the Calderon-Zygmund decomposition lemma (Lemma \ref{cz}) for $f$ we have
  $f = \sum_{i=0}^{\infty}f_i$, where $\|f_0\|_{\infty}\le 2\lambda, \|f_0\|_1 \le 2\|f\|_1$. Since the Hilbert transform is of type $(L^2, L^2)$ and
  $\|\sigma_m 1_{\overline{\beta F_{\epsilon}}}\|_{\infty}\le 1$, then we have
\[
\int_{\gamma F_{\epsilon}}|H_n f_0(y)|^2 |\sigma_m 1_{\overline{\beta F_{\epsilon}}}(y)|^2 dy \le \|H_n f_0(y)\|^2_2 \le C\|f_0\|_2^2 \le C\lambda\|f\|_1.
\]
That is, instead of $f$ we have to investigate only $f^0 :=  \sum_{i=1}^{\infty}f_i$.
Since the elements of $\F{}$ are disjoint dyadic intervals, therefore
\[
H_nf^0(y) = \sum_{J\in \F{}}\int_{J\setminus 3I_{|n|}(y)} f^0(x)\cot\left(\frac{y-x}{2}\right) dx,
\]
and then

\[
\begin{split}
     & \int_{\gamma F_{\epsilon}}|H_n f^0(y)|^2 |\sigma_m 1_{\overline{\beta F_{\epsilon}}}(y)|^2 dy\\
& = \sum_{(J,K)\in  \F{}\times \F{}}\int_{\gamma F_{\epsilon}}
\int_{J\setminus 3I_{|n|}(y)} f^0(x)\cot\left(\frac{y-x}{2}\right) dx\\
& \times\int_{K\setminus 3I_{|n|}(y)} f^0(z)\cot\left(\frac{y-z}{2}\right) dz
  |\sigma_m 1_{\overline{\beta F_{\epsilon}}}(y)|^2 dy.
\end{split}
\]
The sum over pairs $(J,K)\in  \F{}\times \F{}$ will be divided into the following parts: ($\mathcal{G}=  \F{}\times \F{}$)

\[
\begin{split}
      &  \sum_{(J,K)\in \mathcal{G}} \int_{\gamma F_{\epsilon}} = \sum_{(J,K)\in \mathcal{G}} \int_{\gamma F_{\epsilon}\cap\gamma J \cap \gamma K} \\
& +  \sum_{(J,K)\in \mathcal{G}} \int_{(\gamma F_{\epsilon}\cap\gamma J) \setminus \gamma K}
 +  \sum_{(J,K)\in \mathcal{G}} \int_{(\gamma F_{\epsilon}\cap\gamma K) \setminus \gamma J}\\
 & +  \sum_{(J,K)\in \mathcal{G}} \int_{\gamma F_{\epsilon}\setminus(\gamma J\cup \gamma K)}
 =: \sum_{i=1}^{4}A^i.
\end{split}
\]

First, we investigate $A^1$. For $x\notin 3I_{|n|}(y)$  we have
\[
\left|\int_{J\setminus 3I_{|n|}(y)} f^0(x)\cot\left(\frac{y-x}{2}\right) dx\right| \le Cn \int_J |f^0(x)| dx \le Cn |J| |J|^{-1}\int_J |f^0| \le C\lambda n|J|
\]
as it comes from Lemma \ref{cz}. Similarly,
\[
\left|\int_{K\setminus 3I_{|n|}(y)} f^0(z)\cot\left(\frac{y-z}{2}\right) dz\right|  \le C\lambda n|K|.
\]
Thus,
\[
A^1 \le \sum_{(J,K)\in \mathcal{G}} \int_{\gamma F_{\epsilon}\cap\gamma J \cap \gamma K} C\lambda^2 n^2 |J| |K| |\sigma_m 1_{\overline{\beta F_{\epsilon}}}(y)|^2 dy.
\]
Let  $y\in \gamma F_{\epsilon} \cap\gamma J \cap \gamma K$, where $(J,K)\in \mathcal{G}$. Then, there exists an $I\in \F{\epsilon}$ such that
$y\in \gamma I\cap\gamma J \cap \gamma K $, where $(I,J,K)\in \mathcal{G}_{\epsilon} = \F{\epsilon}\times \F{}\times \F{}$. We give an upper bound for
the nonnegative real number $\sigma_m 1_{\overline{\beta F_{\epsilon}}}(y)$.

\begin{equation}\label{sigma<16/mI}
\begin{split}
    &  \sigma_m 1_{\overline{\beta F_{\epsilon}}}(y) = \frac{1}{\pi}\int_{-\pi}^{\pi}  1_{\overline{\beta F_{\epsilon}}}(x) K_m(y-x) dx \\
     & = \frac{1}{\pi}\int_{\overline{\beta F_{\epsilon}}} K_m(y-x) dx \le \frac{8}{m}\int_{\overline{\beta F_{\epsilon}}}\frac{1}{|y-x  \, (\Mod T)|^2}dx  \\
     & \le \frac{16}{m}\int_{\set{z: z> (\beta-\gamma)|I|/2}}\frac{1}{z^2} dz \le \frac{32}{\beta-\gamma}\frac{1}{m |I|}
\end{split}
\end{equation}
because for $y\in \gamma I, I\in \F{\epsilon}$ and $x\in \overline{\beta F_{\epsilon}}$ we have $x\notin \beta I$ and consequently
$|y-x \, (\Mod T)|> (\beta-\gamma)|I|/2$. Remark that $y-x (\Mod T)$ means  $y-x (\Mod T)= y-x + u2\pi \in T$ for a $u \in\{-1,0,1\}$. That is, if $y-x$ is not in interval $T$, then it is shifted by $2\pi$.

Moreover, if $|J|\ge |I|$, then by $I\in \F{\epsilon}$ which is equivalent with $|I|> \epsilon$ (and of course $I\in \F{}$) we also have
$|J|> \epsilon$, that is, $J\in \F{\epsilon}, y\in \gamma J$ and consequently by the fact that $x\in \overline{\beta F_{\epsilon}}$
\[
\sigma_m 1_{\overline{\beta F_{\epsilon}}}(y)  \le \frac{32}{\beta-\gamma}\frac{1}{m |J|}.
\]
The same can be said about $K$ because $y\in \gamma K$. This yields

\begin{equation}\label{sigma<16/mIJK}
\sigma_m 1_{\overline{\beta F_{\epsilon}}}(y)\le  \frac{32}{(\beta-\gamma)m \max(|I|, |J|, |K|)} \quad \mbox{for\ } y\in \gamma I\cap\gamma J\cap \gamma K.
\end{equation}

Henceforth it is easy to give a bound for $A^1$:
\[
\begin{split}
    &
A^1 \le \sum_{(J,K)\in \mathcal{G}}\frac{C\lambda^2 n^2}{(\beta-\gamma)^2m^2}\int_{\gamma F_{\epsilon}\cap\gamma J\cap \gamma K}
\frac{|J| |K|}{(|J| + |K|)^2} dy\\
& \le \frac{C\lambda^2}{(\beta-\gamma)^2}\sum_{(J,K)\in \mathcal{G}}\frac{|J| |K| |\gamma F_{\epsilon}\cap\gamma J\cap \gamma K|}{(|J| + |K|)^2} \\
& \le \frac{C\lambda^2}{(\beta-\gamma)^2}\sum_{(J,K)\in \F{}\times\F{}}\frac{|J| |K| |\gamma J\cap \gamma K|}{(|J| + |K|)^2}\\
& \le \frac{C\lambda^2}{(\beta-\gamma)^2}\sum_{(J,K)\in \F{}\times\F{}} |\gamma J\cap \gamma K|
=: A^{1,1}.
\end{split}
\]
Thus, by Lemma \ref{gammaIgammaJ} and Lemma \ref{cz} we have
\[
A^1 \le A^{1,1} \le \frac{C\lambda^2}{(\beta-\gamma)^2}C_{\gamma} |F|
\le C_{\beta,\gamma}\|f\|_1\lambda.
\]
Next, turn our attention to $A^2$. Since $x\notin 3I_{|n|}(y)$, then as in the investigation of $A^1$ we have
\[
\left|\int_{J\setminus 3I_{|n|}(y)} f^0(x)\cot\left(\frac{y-x}{2}\right) dx\right|  \le C\lambda n|J|
\]
again. Meanwhile, as in the investigation of $A^1$ for any $y\in \gamma I\cap \gamma \tilde J$ ($I\in \F{\epsilon}, \tilde J\in \F{}$) it yields
\[
\sigma_m 1_{\overline{\beta F_{\epsilon}}}(y)\le  \frac{32}{m(\beta-\gamma)}\frac{1}{\max(|I|, |\tilde J|)}
\]
and
\[
\left|\sigma_m 1_{\overline{\beta F_{\epsilon}}}(y)\right|^2\le  \frac{32}{m(\beta-\gamma)}\frac{1}{\max(|I|, |\tilde J|)} \le \frac{32}{m(\beta-\gamma)}\frac{1}{|\tilde J|}
\]
($B\le a,C$ gives $B^2\le Ca$.)
Recall that $\gamma J = \bigcup_{j=1/2-\gamma/2}^{\gamma/2-1/2}J^{(j)}$ and fix a $j$.

Let $\F{}^{\prime}$ be a subset of $\F{}$. We call the dyadic interval $\tilde J^{(j)}$  maximal with respect to $\F{}^{\prime}$ if
$\tilde J\in \F{}^{\prime}$ and there is no interval
$ J\in \F{}^{\prime}$ such that $\tilde J^{(j)} \subsetneq J^{(j)}$.
This will be abbreviated as $\tilde J\in \F{}^{\prime}, \tilde J^{(j)}$ maximal.
If $\F{}^{\prime}= \F{}$, then we simply use the term maximal intervals.
Thus, recalling that two dyadic intervals are disjoint or one of them contains the other, one has
\[
\begin{split}
    & A^2 \le \sum_{j=1/2-\gamma/2}^{\gamma/2-1/2}\sum_{K\in \F{}}\sum_{\tilde J\in\F{}, \tilde J^{(j)} \mbox{\ {\tiny maximal}}}\sum_{J\in\F{},  J^{(j)} \subset\tilde J^{(j)}}
    \int_{(\gamma F_{\epsilon}\cap J^{(j)}) \setminus \gamma K}
    C\lambda n|J|    \\
& \times \left|\int_{K\setminus 3I_{|n|}(y)} f^0(z)\cot\left(\frac{y-z}{2}\right) dz\right| \left|\sigma_m 1_{\overline{\beta F_{\epsilon}}}(y)\right|^2 dy \\
& \le
 \sum_{j=1/2-\gamma/2}^{\gamma/2-1/2}\sum_{K\in \F{}}\sum_{\tilde J\in\F{}, \tilde J^{(j)} \mbox{\ {\tiny maximal}}}\sum_{J\in\F{},  J^{(j)} \subset\tilde J^{(j)}}
    \int_{(\gamma F_{\epsilon}\cap \tilde J^{(j)}) \setminus \gamma K}
    C\lambda n|J|    \\
& \times \left|\int_{K\setminus 3I_{|n|}(y)} f^0(z)\cot\left(\frac{y-z}{2}\right) dz\right| \left|\sigma_m 1_{\overline{\beta F_{\epsilon}}}(y)\right|^2 dy \\
& \le
\sum_{j=1/2-\gamma/2}^{\gamma/2-1/2}\sum_{K\in \F{}}\sum_{\tilde J\in\F{}, \tilde J^{(j)} \mbox{\ {\tiny maximal}}}\sum_{J\in\F{},  J^{(j)} \subset\tilde J^{(j)}}
    \int_{(\gamma F_{\epsilon}\cap \tilde J^{(j)}) \setminus \gamma K}
    C\lambda n|J|  \frac{C}{m(\beta-\gamma)}\frac{1}{|\tilde J|}  \\
& \times \left|\int_{K\setminus 3I_{|n|}(y)} f^0(z)\cot\left(\frac{y-z}{2}\right) dz\right|  dy. \\
 \end{split}
\]
Since by Lemma \ref{gammaIgammaJ}
\[
\sum_{J\in\F{},  J^{(j)} \subset\tilde J^{(j)}} \frac{|J|}{|\tilde J|} = \sum_{J\in\F{},  J^{(j)} \subset\tilde J^{(j)}} \frac{|J^{(j)}|}{|\tilde J^{(j)}|} \le C_{\gamma},
\]
in the estimation of $A^2$ we have
\[
\begin{split}
    & A^2 \le \frac{C_{\gamma}\lambda}{\beta-\gamma}
\sum_{j=1/2-\gamma/2}^{\gamma/2-1/2}\sum_{K\in \F{}}\sum_{\tilde J\in\F{}, \tilde J^{(j)} \mbox{\ {\tiny maximal}}}
    \int_{(\gamma F_{\epsilon}\cap \tilde J^{(j)}) \setminus \gamma K}
   \\
& \times \left|\int_{K\setminus 3I_{|n|}(y)} f^0(z)\cot\left(\frac{y-z}{2}\right) dz\right|  dy.
 \end{split}
\]

Since the maximal dyadic intervals   $\tilde J^{(j)}$ $(\tilde J\in\F{})$ are disjoint, then
\[
\begin{split}
    & A^2 \le \frac{C_{\gamma}\lambda}{\beta-\gamma}\sum_{K\in \F{}}
    \int_{T \setminus \gamma K}
\left|\int_{K\setminus 3I_{|n|}(y)} f^0(z)\cot\left(\frac{y-z}{2}\right) dz\right|  dy.
 \end{split}
\]

Two dyadic intervals are disjoint or one of them is contained in the other. Is it possible that
$I_{|n|}(y) \subset K$? If yes, then $y\in I_{|n|}(y) \subset K\subset 3K \subset \gamma K$ gives a contradiction since  $y\in T\setminus \gamma K$.
That is, either $K\setminus I_{|n|}(y)=\emptyset$ or $K\setminus I_{|n|}(y) = K$.
On the other hand, if $I^+_{|n|}(y) \subset K$, then $y\in I_{|n|}(y) 
\subset 3K \subset\gamma K$ gives the same contradiction. That is,
either $K\setminus I^+_{|n|}(y)=\emptyset$ or $K\setminus I^+_{|n|}(y) = K$. The same situation concerns  $I^-_{|n|}(y)$ which means
that $K\setminus 3I_{|n|}(y)$ is either  $\emptyset$ or $K$.

In the first case ($K\setminus 3I_{|n|}(y)=\emptyset$) there is nothing to prove. In the second case
($K\setminus 3I_{|n|}(y)=K$)
we have $\int_{K\setminus 3I_{|n|}(y)} f^0(z) dz = 0$, because by Lemma \ref{cz} $\int_{K} f^0(z) dz = 0$.
Then, denoting by $z_0$ the center of the interval $K$ we have
\begin{equation}\label{intK-3I}
\begin{split}
   &
 \left|\int_{K\setminus 3I_{|n|}(y)} f^0(z)\cot\left(\frac{y-z}{2}\right) dz\right| =
 \left|\int_{K\setminus 3I_{|n|}(y)} f^0(z)\left(\cot\left(\frac{y-z}{2}\right)- \cot\left(\frac{y-z_0}{2}\right)\right) dz\right|\\
  & \le C \int_{K\setminus 3I_{|n|}(y)} |f^0(z)| \frac{|K|}{\sin^2\left(\frac{y-z_0}{2}\right)} dz \\
  & \le
      \frac{C|K|}{\sin^2\left(\frac{y-z_0}{2}\right)}\int_{K}|f^0(z)| dz
\end{split}
\end{equation}
because
\[
\left|\cot\left(\frac{y-z}{2}\right)- \cot\left(\frac{y-z_0}{2}\right)\right| \le C|z-z_0|\frac{1}{\sin^2\left(\frac{y-z_0}{2}\right)}
\le C\frac{|K|}{\sin^2\left(\frac{y-z_0}{2}\right)}.
\]
Besides, as above we have $\int_{K}|f^0(z)| dz \le 4\lambda |K|$. Thus,

\[
A^2 \le \frac{C_{\gamma}\lambda^2}{\beta-\gamma}\sum_{K\in \F{}}|K|^2
\int_{T \setminus \gamma K}\frac{1}{\sin^2\left(\frac{y-z_0}{2}\right)} dy.
\]

Consequently,
\[
A^2 \le \frac{C_{\gamma}\lambda^2}{\beta-\gamma}\sum_{K\in \F{}}|K|^2
\int_{T \setminus \gamma K}\frac{1}{\left(y-z_0\right)^2} dy.
\]
Since $z_0$ is the center of $K$, we have ($\gamma > 5$)
\[
\int_{T \setminus \gamma K}\frac{1}{\left(y-z_0\right)^2} dy \le
C\int_{\frac{\gamma-1}{2}|K|}^{\infty}\frac{1}{t^2} dt \le \frac{C}{|K|}.
\]
Consequently, we have
\[
\begin{split}
    &
A^2 \le \frac{C_{\gamma}\lambda^2}{\beta-\gamma}\sum_{K\in \F{}}|K|^2
\frac{1}{|K|} \le \frac{C_{\gamma}\lambda^2}{\beta-\gamma}\sum_{K\in \F{}}|K|\\
& = \frac{C_{\gamma}\lambda^2}{\beta-\gamma}|F| \le \frac{C_{\gamma}}{\beta-\gamma}\|f\|_1\lambda \le C_{\beta,\gamma}\|f\|_1\lambda.
\end{split}
\]
This completes the discussion for $A^2$. The sum $A^3$ is similar. Only the role of $J$ and $K$ is changed and therefore we also have
\[
A^3 \le C_{\beta,\gamma}\|f\|_1\lambda.
\]
Finally, we turn our attention to the sum $A^4$.
Apply the inequality $\sigma_m 1_{\overline{\beta F_{\epsilon}}}(y)\le 1$ and then
\[
\begin{split}
   &
A^4 \\
& \le \sum_{J, K\in \F{}}\int_{T \setminus (\gamma J \cup \gamma K)}
\left|\int_{J\setminus 3I_{|n|}(y)} f^0(x)\cot\left(\frac{y-x}{2}\right) dx\right|
  \left|\int_{K\setminus 3I_{|n|}(y)} f^0(z)\cot\left(\frac{y-z}{2}\right) dz\right| dy.
\end{split}
\]
In the same way as in the investigation of $A^2$ we get again ($\gamma > 5$) (see (\ref{intK-3I}))
\[
\begin{split}
   &
 \left|\int_{K\setminus 3I_{|n|}(y)} f^0(z)\cot\left(\frac{y-z}{2}\right) dz\right| 
    \le   \frac{C|K|}{\sin^2\left(\frac{y-z_0}{2}\right)}\int_{K}|f^0(z)| dz
   \le   \frac{C|K|^2\lambda}{\sin^2\left(\frac{y-z_0}{2}\right)},
\end{split}
\]
where $z_0$ is the center of $K$. Similarly,
\[
 \left|\int_{J\setminus 3I_{|n|}(y)} f^0(x)\cot\left(\frac{y-x}{2}\right) dx\right| \le  \frac{C|J|^2\lambda}{\sin^2\left(\frac{y-x_0}{2}\right)},
\]
where $x_0$ is the center of $J$.
Thus,
\begin{equation}\label{A4estimation}
 A^4
 \le C\lambda^2 \sum_{J, K\in \F{}}\int_{T \setminus (\gamma J \cup \gamma K)}
\frac{|J|^2 |K|^2}{\sin^2\left(\frac{y-x_0}{2}\right) \sin^2\left(\frac{y-z_0}{2}\right)} dy.
\end{equation}

Let $A^{4,1}$ be the part of the right hand side of (\ref{A4estimation}) for which $J\not = K$ (then $y_0\not= z_0$).
In this case
apply the inequality $\sin^2(a-b)\le \sin^2a + \sin^2b + 2|\sin a \sin b|\le 2(\sin^2a + \sin ^2b)$ for $a=(y-z_0)/2, b=(y-x_0)/2$.
We have
\begin{equation}
\label{1/(y-x0)(y-z0)split_to_sums}
\frac{1}{\sin^2\left(\frac{y-x_0}{2}\right) \sin^2\left(\frac{y-z_0}{2}\right)} \le
\frac{2}{\sin^2\left(\frac{x_0-z_0}{2}\right)}\left(
\frac{1}{\sin^2\left(\frac{y-x_0}{2}\right)} +
\frac{1}{\sin^2\left(\frac{y-z_0}{2}\right)}
\right).
\end{equation}
Just as in the investigation of the sum $A^2$ we get again ($\gamma > 5$)
\[
\begin{split}
& \int_{T \setminus (\gamma J \cup \gamma K)}
\frac{1}{\sin^2\left(\frac{y-x_0}{2}\right)} + \frac{1}{\sin^2\left(\frac{y-z_0}{2}\right)} dy \\
& \le
C\int_{T\setminus \gamma J}\frac{1}{(y-x_0)^2} dy + C\int_{T\setminus \gamma K}\frac{1}{(y-z_0)^2} dy \\
& \le C\left(\frac{1}{|J|} + \frac{1}{|K|}\right).
\end{split}
\]
Consequently,
\[
A^{4,1} \le C\lambda^2\sum_{J,K\in \F{}, J\not= K} \frac{|J|^ 2|K|^2}{\sin^2\left(\frac{x_0-z_0}{2}\right)}\left(\frac{1}{|J|} + \frac{1}{|K|}\right).
\]
$x_0$ and $z_0$ are the centers of the disjoint dyadic intervals $J$ and $K$. Therefore,
\[
\sum_{J\in \F{}, J\not= K} \frac{|J|}{\sin^2\left(\frac{x_0-z_0}{2}\right)}
\le C\int_{T\setminus K}\frac{1}{(t-z_0)^2} dt \le C \int_{\frac{|K|}{2}}^{+\infty}\frac{1}{t^2} dt \le \frac{C}{|K|}.
\]
Similarly,
\[
\sum_{K\in \F{}, K\not= J} \frac{|K|}{\sin^2\left(\frac{x_0-z_0}{2}\right)} \le \frac{C}{|J|}.
\]
These assumptions give
\[
A^{4,1} \le C\lambda^2\sum_{K\in \F{}}|K| +  C\lambda^2\sum_{J\in \F{}}|J| \le C\lambda^2 |F| \le C\|f\|_1\lambda.
\]
On the other hand, let
$A^{4,2}$ be the part in the right hand side of (\ref{A4estimation}) (estimation of $A^4$) for which $J=K$. For this we have
\[
\begin{split}
&
A^{4,2} \\
& \le C\lambda^2 \sum_{J\in \F{}}\int_{T \setminus \gamma J }
\frac{|J|^4}{\sin^4\left(\frac{y-x_0}{2}\right)} dy \\
& \le
C\lambda^2 \sum_{J\in \F{}}|J|^4\int_{T \setminus \gamma J }\frac{1}{(y-x_0)^4} dy \le C\lambda^2 \sum_{J\in \F{}}|J|
\le  C\|f\|_1\lambda.
\end{split}
\]
Consequently,
\begin{equation}\label{A4estimation2}
A^4 \le C\|f\|_1\lambda.
\end{equation}

This completes the proof of Lemma \ref{gammaFjH2}.
\end{proof}

The second lemma to be proved is
\begin{lem}\label{gammaFjS}
Let $\beta > \gamma >5$ be odd integers, $\epsilon>0$, $l,m\in\N$ with $l\le 100 m$ and $f\in L^1(T), \lambda >\|f\|_1/(2\pi)$. Then the inequality
\[
\int_{\gamma F_{\epsilon}}|S_l f(y)|^2 |\sigma_m 1_{\overline{\beta F_{\epsilon}}}(y)|^2 dy \le C_{\beta, \gamma}\|f\|_1\lambda,
\]
holds, where the constant $C_{\beta, \gamma}$ can depend only on $\beta$ and $\gamma$ (and it is uniform in $f, l, m, \epsilon, \lambda$).
\end{lem}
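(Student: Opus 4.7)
The plan is to reduce Lemma~\ref{gammaFjS} to Lemma~\ref{gammaFjH2} through the identity
\[
D_l(u) = \sin(lu)\cot(u/2) + \cos(lu)
\]
for the Dirichlet kernel, combined with the modulation trick $g := fe^{-il\cdot}$, for which $|g|=|f|$.

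Writing $S_l f = (T_l f + R_l f)/(2\pi)$ with $T_l f(y) = \int_T f(x)\sin(l(y-x))\cot((y-x)/2)\,dx$ and $R_l f(y) = \int_T f(x)\cos(l(y-x))\,dx$, the term $R_l f(y) = \cos(ly)\int_T f(x)\cos(lx)\,dx + \sin(ly)\int_T f(x)\sin(lx)\,dx$ is pointwise bounded by $\|f\|_1$, so its contribution to the desired integral is at most $C\|f\|_1^2 \le C\|f\|_1\lambda$ by $2\pi\lambda \ge \|f\|_1$. For $T_l f$ I will split the integration domain at $3I_{|l|}(y)$. On the complement, since $\sin(l(y-x)) = \IM[e^{il(y-x)}]$,
\[
T_l^{\mathrm{tail}}f(y) = \int_{T\setminus 3I_{|l|}(y)} f(x)\sin(l(y-x))\cot\!\left(\frac{y-x}{2}\right)dx = \IM\bigl[e^{ily}H_l(fe^{-il\cdot})(y)\bigr].
\]
The crucial observation is that since $|g|=|f|$, the Calder\'on-Zygmund decomposition of $g$ at level $\lambda$ produces the \emph{identical} dyadic family $\F{}$ and the identical sets $F_\epsilon$, $\gamma F_\epsilon$, $\beta F_\epsilon$ as for $f$. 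Lemma~\ref{gammaFjH2} applied to $g$ with $n = l \le 100m$ therefore bounds the tail contribution by $C_{\beta,\gamma}\|g\|_1\lambda = C_{\beta,\gamma}\|f\|_1\lambda$.

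For the local piece $T_l^{\mathrm{loc}}f(y) = \int_{3I_{|l|}(y)} f(x)\sin(l(y-x))\cot((y-x)/2)\,dx$, I apply the Calder\'on-Zygmund decomposition $f = f_0 + f^0$. The $L^2$-part $f_0$ is handled by the uniform Schur bound $\|T_l^{\mathrm{loc}}\|_{L^2\to L^2}\le C$ (which follows from $\int|\sin(l(y-x))\cot((y-x)/2)|\mathbf{1}_{3I_{|l|}(y)}(x)\,dx \le C$), giving $\|T_l^{\mathrm{loc}}f_0\|_2^2 \le C\|f_0\|_2^2 \le C\lambda\|f\|_1$. For the singular part $f^0 = \sum_J f_J$ I will run the four-sum decomposition $A^1, A^2, A^3, A^4$ of Lemma~\ref{gammaFjH2} restricted to pairs $(J,K)$ such that $3I_{|l|}(y)$ meets both $J$ and $K$. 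A key geometric fact is that for $\gamma \ge 5$, the combination $y\notin\gamma K$ and $3I_{|l|}(y)\cap K\ne\emptyset$ forces $|K|\le 2\pi/2^{|l|}$, so only \emph{small} $K$ appear in $A^2, A^3, A^4$, and they satisfy $l|K|\le C$ and $K\subset 3I_{|l|}(y)$. In $A^1$ the near-diagonal bound $Cl$ in place of $Cn$ is absorbed as in Lemma~\ref{gammaFjH2} using $l\le 100m$ and the pointwise estimate $\sigma_m 1_{\overline{\beta F_\epsilon}}(y)\le C_{\beta,\gamma}/(m|I|)$ for $y\in\gamma I$, $I\in\F{\epsilon}$, established in (\ref{sigma<16/mI}). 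In $A^2, A^3, A^4$ the cancellation $\int_J f_J = 0$ combined with $|D_l'(u)|=O(l/|u|+1/|u|^2)$ and the small-interval constraint $l|K|\le C$ reduces the analogues of the Hilbert-case estimates to essentially the same bounds, and the contribution is again $\le C_\gamma\|f\|_1\lambda$.

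The hard part will be the bookkeeping in the $A^2, A^3, A^4$ pieces of the local part: the derivative $|D_l'(u)|$ carries an extra factor of $l$ compared with $|\cot'(u/2)|$, and matching this against the constraints $|K|\le 2\pi/2^{|l|}$ (so $l|K|\le C$), together with the $1/m$-decay of $\sigma_m 1_{\overline{\beta F_\epsilon}}$, is the delicate step.
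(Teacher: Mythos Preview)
Your overall architecture matches the paper's: split $S_lf$ into a tail over $T\setminus 3I_{|l|}(y)$ (reduced to $H_l$ of modulations of $f$, then Lemma~\ref{gammaFjH2}) plus a local piece over $3I_{|l|}(y)$, and treat the local piece via Calder\'on--Zygmund and the $A^1,\dots,A^4$ splitting. The tail reduction, the $f_0$ part, and your $A^1$ argument are all correct and essentially identical to the paper.

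The gap is in the local $A^2,A^3$ (and your description of $A^4$). Your plan is to keep the oscillating kernel $g(u)=\sin(lu)\cot(u/2)$ and exploit $\int_K f^0=0$ together with $|g'(u)|=O(l/|u|+1/|u|^2)$. But in this regime the cancellation buys nothing over the trivial bound. For $y\notin\gamma K$ with $K\subset 3I_{|l|}(y)$ one gets
\[
\Bigl|\int_K f^0(z)g(y-z)\,dz\Bigr|\le C\lambda|K|^2\Bigl(\frac{l}{|y-z_0|}+\frac{1}{|y-z_0|^2}\Bigr),
\]
and after using $l|K|\le C$ the first term is only $C\lambda|K|/|y-z_0|$, i.e.\ decay of order $1/|y-z_0|$ rather than the $1/|y-z_0|^2$ that drives the Hilbert-case estimate. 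Integrating in $y$ over $\{2|K|\lesssim|y-z_0|\lesssim 1/l\}$ produces a factor $\log\bigl(C/(l|K|)\bigr)$, and the resulting sum $\sum_{K}|K|\log\bigl(C/(l|K|)\bigr)$ is \emph{not} controlled by $|F|$ when there are many intervals $K$ much smaller than $1/l$. So the claim that one ``reduces to essentially the same bounds'' as in the Hilbert case is not justified.

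The paper sidesteps this entirely by first majorizing the local piece pointwise,
\[
\Bigl|\tfrac{1}{\pi}\int_{3I_{|l|}(y)}f(x)D_l(y-x)\,dx\Bigr|\le E_l|f|(y):=l\int_{3I_{|l|}(y)}|f(x)|\,dx,
\]
and then proving the weighted $L^2$ bound for $E_lh$ with an \emph{arbitrary} $h$ (applied afterwards to $h=|f|$, which has the same $\mathcal F$). The point is that the kernel of $E_l$ is constant on $3I_{|l|}(y)$: once you observe (as you did) that $y\notin\gamma K$ forces $K\cap 3I_{|l|}(y)\in\{\emptyset,K\}$, the mean-zero property $\int_K h^0=0$ kills the integrand \emph{exactly}, so $A^2=A^3=A^4=0$ and only $A^1$ survives. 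Replacing your oscillating-kernel argument for the local piece by this $E_l$ reduction closes the gap with no further work.
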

\begin{proof}
Denote by $|l|$ the lower integer part of the binary logarithm of $l$. It is well-known that

\[
\frac{1}{e^{\imath z}-1} = -\frac{1}{2} - \frac{\imath}{2}\cot \frac{z}{2}
\]

and for the Dirichlet kernel

\begin{equation}\label{dirichlet_kernel}
D_l(z) = \frac{1}{2}\sum_{k=-l}^{l}e^{\imath kz} = \left(e^{\imath(l+1)z} - e^{-\imath lz}\right)\left(-\frac{1}{4} - \frac{\imath}{4}\cot \frac{z}{2}\right).
\end{equation}

Then, let

\begin{equation}\label{modified_partial}
\tilde S_l f(y) := \frac{1}{\pi}\int_{T\setminus 3I_{|l|}(y)} f(x)D_l(y-x) dx.
\end{equation}

From the definition of the Hilbert transform (\ref{hilbert_def}) we have
\[
\begin{split}
    &  |\tilde S_l f(y) | \le \frac{1}{2\pi}\int_{T\setminus 3I_{|l|}(y)}|f(x)| dx \\
     & + \frac{1}{4\pi}\left|H_l(f(\cdot)e^{-\imath(l+1)\cdot})(y)\right| +  \frac{1}{4\pi}\left|H_l(f(\cdot)e^{\imath l\cdot})(y)\right|
\end{split}
\]
and by this we also have
\begin{equation}\label{^2estimation_on_modified_partial}
|\tilde S_l f(y) |^2 \le \|f\|_1^2 + \left|H_l(f(\cdot)e^{-\imath(l+1)\cdot})(y)\right|^2 + \left|H_l(f(\cdot)e^{\imath l\cdot})(y)\right|^2.
\end{equation}

Recall the Calderon-Zygmund decomposition lemma, that is, Lemma \ref{cz}. Then $\|f\|_1^2 \le 2\pi\|f\|_1\lambda$. Besides, since
$|f(\cdot)| = |f(\cdot)e^{\imath k\cdot}|$ for any $k\in \mathbb{Z}$, the set $\F{}$ for the function $f(\cdot)e^{-\imath k\cdot}$ will be the same as for the
function $f$. That is, by Lemma \ref{gammaFjH2} we have
\[
\int_{\gamma F_{\epsilon}}|\tilde S_l f(y)|^2 |\sigma_m 1_{\overline{\beta F_{\epsilon}}}(y)|^2 dy \le C_{\beta, \gamma}\|f\|_1\lambda.
\]
Define  the operator $E_l$ as follows:
\begin{equation}\label{Eldef}
E_lf(y) := l\int_{3I_{|l|}(y)}f(x) dx.
\end{equation}

Now we have to check the difference of $S_lf$ and $\tilde S_lf$. This is nothing else but
\begin{equation}\label{SltildeSldifference}
\frac{1}{\pi}\left|\int_{3I_{|l|}(y)}f(x)D_l(y-x) dx\right| \le \frac{l+1/2}{\pi}\int_{3I_{|l|}(y)}|f(x)| dx \le E_l|f|(y).
\end{equation}

We investigate the operator $E_l$ as it would be in the statement of Lemma \ref{gammaFjS} instead of $S_l$. Recall again the Calderon-Zygmund decomposition lemma, that is, Lemma \ref{cz} for $f=\sum_{i=0}^{\infty}f_i = f_0+f^0$. For $f_0 $ we have
$|E_lf_0(y)|^2 \le 12\pi \|f_0\|_{\infty} |E_lf_0(y)|$ and consequently,
\[
\int_{\gamma F_{\epsilon}}|E_l f_0(y)|^2 |\sigma_m 1_{\overline{\beta F_{\epsilon}}}(y)|^2 dy \le C\int_{-\pi}^{\pi}\|f_0\|_{\infty}|E_lf_0(y)| dy \le
C\|f_0\|_{\infty} \|f_0\|_1 \le C\|f\|_1\lambda.
\]
Next, let's see the discussion for $f^0$.
Using the notation of the proof of Lemma \ref{gammaFjH2} we have

\[
\begin{split}
     & \int_{\gamma F_{\epsilon}}|E_l f^0(y)|^2 |\sigma_m 1_{\overline{\beta F_{\epsilon}}}(y)|^2 dy\\
& \le  \sum_{J,K\in \F{}}\int_{\gamma F_{\epsilon}}l^2
\left|\int_{J\cap 3I_{|l|}(y)} f^0(x) dx\right|
\left|\int_{K\cap 3I_{|l|}(y)} f^0(z) dz\right|
  |\sigma_m 1_{\overline{\beta F_{\epsilon}}}(y)|^2 dy.
\end{split}
\]
The sum over pairs $(J,K)\in  \F{} \times \F{}$ will be divided into the following parts: ($\mathcal{G} =  \F{}\times \F{}$)
\[
\begin{split}
      &  \sum_{(J,K)\in \mathcal{G}} \int_{\gamma F_{\epsilon}} = \sum_{(J,K)\in \mathcal{G}} \int_{\gamma F_{\epsilon}\cap\gamma J \cap \gamma K} \\
& +  \sum_{(J,K)\in \mathcal{G}} \int_{(\gamma F_{\epsilon}\cap\gamma J) \setminus \gamma K}
 +  \sum_{(J,K)\in \mathcal{G}} \int_{(\gamma F_{\epsilon}\cap\gamma K) \setminus \gamma J}\\
 & +  \sum_{(J,K)\in \mathcal{G}} \int_{\gamma F_{\epsilon}\setminus(\gamma J\cup \gamma K)}
 =: \sum_{i=1}^{4}A^i.
\end{split}
\]
First, check $A^3$. In this case $y\in \gamma K \setminus \gamma J$. That is, $y\notin \gamma J$.

If $|I_{|l|}(y)| =\frac{2\pi }{2^{|l|}} < |J|$, then
(by $y\in I_{|l|}(y)$) we have $3I_{|l|}(y)\cap J =\emptyset$ and consequently
$\int_{J\cap 3I_{|l|}(y)} f^0(x) dx=0$ gives that the every addend in $A^3$ corresponding to intervals $J$ of this type is $0$.

On the other hand,
$|I_{|l|}(y)|  \ge  |J|$ gives that either $J\cap I_{|l|}(y)=\emptyset$ or $J\cap I_{|l|}(y)=J$. The same can be said about
the intervals $I^+_{|l|}(y)$ and $I^-_{|l|}(y)$.
This gives that either $J\cap 3I_{|l|}(y)= (J\cap I_{|l|}(y))\cup (J\cap I^+_{|l|}(y))\cup(J\cap I^-_{|l|}(y)) = \emptyset$ or
$J\cap 3I_{|l|}(y)= J$. In both cases by Lemma \ref{cz} we have $\int_{J\cap 3I_{|l|}(y)}f^0(x) dx =0$.

That is, in any cases $y\notin\gamma J$ gives $\int_{J\cap 3I_{|l|}(y)}f^0(x) dx =0$. This implies
that every addend in $A^3$ corresponding to any interval $J$ (regardless of its measure) is $0$.
Thus, $A^3=0$. The same argument gives  $A^4=0$ and changing
the role of $J$ and $K$ gives  $A^2=0$. That is, the only sum (or you may say case) remained to be investigated is $A^1$. In this situation we can just follow the corresponding steps of the proof of Lemma \ref{gammaFjH2} (see (\ref{sigma<16/mI}) and (\ref{sigma<16/mIJK})).
That is, again we have for $y\in \gamma I\cap \gamma J\cap\gamma K$ (where $I$ is some element of $\F{\epsilon}$):
\[
\sigma_m 1_{\overline{\beta F_{\epsilon}}}(y)  \le \frac{C}{\beta-\gamma}\frac{1}{m (|I|+|J|+|K|)}\le \frac{C}{\beta-\gamma}\frac{1}{m (|J|+|K|)}.
\]
Denoting $\F{}\times\F{}$ by $\mathcal{G}$ again
\[
\begin{split}
&
A^1 \le C\sum_{(J,K)\in \mathcal{G}} \int_{\gamma F_{\epsilon}\cap\gamma J \cap \gamma K}
l^2\left|\int_{J\cap 3I_{|l|}(y)} f^0(x) dx\right| \left|\int_{K\cap 3I_{|l|}(y)} f^0(z) dz\right| \\
& \times
\frac{1}{(\beta-\gamma)^2m^2(|J|+|K|)^2}.
\end{split}
\]
By the decomposition Lemma \ref{cz} we have
\[
\left|\int_{J\cap 3I_{|l|}(y)} f^0(x) dx\right| \le |J| |J|^{-1}\int_{J}|f^0|\le 4\lambda|J|, \quad \left|\int_{K\cap 3I_{|l|}(y)} f^0(z) dz\right| \le 4\lambda|K|.
\]
Thus,
\[
\begin{split}
    &
A^1 \le \sum_{(J,K)\in \mathcal{G}}\frac{C\lambda^2 l^2}{(\beta-\gamma)^2m^2}\int_{F_{\epsilon}\cap\gamma J\cap \gamma K}
\frac{|J| |K|}{(|J| + |K|)^2} dy\\
& \le \frac{C\lambda^2}{(\beta-\gamma)^2}\sum_{(J,K)\in \mathcal{G}}\frac{|J| |K| |F_{\epsilon}\cap\gamma J\cap \gamma K|}{(|J| + |K|)^2} \\
& \le \frac{C\lambda^2}{(\beta-\gamma)^2}\sum_{(J,K)\in \F{}\times\F{}} |\gamma J\cap \gamma K|
=: A^{1,1}.
\end{split}
\]
Have a look at Lemma \ref{gammaIgammaJ} or alternatively recall that in the proof of Lemma \ref{gammaFjH2} it was proved that $A^{1,1}$ is not greater than $\frac{C_{\gamma}}{(\beta-\gamma)^2}\|f\|_1\lambda$.
That is, for the operator $E_l $ we have
\[
\int_{\gamma F_{\epsilon}}|E_l f^0(y)|^2 |\sigma_m 1_{\overline{\beta F_{\epsilon}}}(y)|^2 dy \le C_{\beta,\gamma}\|f\|_1\lambda.
\]
Since this inequality is also proved for the function $f_0$, then it is also verified for $f=f_0+f^0$. Apply this inequality for the function $|f|$ instead of $f$.
(Remark that $\F{}$ depends only on $|f|$ and $\lambda$.)
This gives
\[
\int_{\gamma F_{\epsilon}}|E_l|f|(y)|^2 |\sigma_m 1_{\overline{\beta F_{\epsilon}}}(y)|^2 dy \le C_{\beta,\gamma}\|f\|_1\lambda.
\]

That is, by
$\left|\int_{3I_{|l|}(y)}f(x)D_l(y-x) dx\right| \le  \pi E_l|f|(y)$ we have
\[
\int_{\gamma F_{\epsilon}}\left|\int_{3I_{|l|}(y)}f(x)D_l(y-x) dx\right|^2
\sigma_m 1_{\overline{\beta F_{\epsilon}}}(y)|^2 dy \le C_{\beta,\gamma}\|f\|_1\lambda
\]
and finally taking into account that $\tilde S_l$ has already been estimated it follows that
\[
\int_{\gamma F_{\epsilon}}\left|S_lf(y)\right|^2
\sigma_m 1_{\overline{\beta F_{\epsilon}}}(y)|^2 dy \le C_{\beta,\gamma}\|f\|_1\lambda.
\]
This completes the proof of Lemma \ref{gammaFjS}.
\end{proof}

By Lemma \ref{gammaFjS} it is easy to prove the next corollary concerning the difference of partial sums and  de la Vall\'ee-Poussin means. Namely,
\begin{cor}\label{gammaFjS-V}
Let $\beta > \gamma >5$ be odd integers, $\epsilon>0$, $n,m\in\N$ with $n\le 50m$ and $f\in L^1(T), \lambda >\|f\|_1/(2\pi)$. Then the inequality
\[
\int_{\gamma F_{\epsilon}}|S_nf(y)-V_nf(y)|^2 |\sigma_m 1_{\overline{\beta F_{\epsilon}}}(y)|^2 dy \le C_{\beta, \gamma}\|f\|_1\lambda
\]
holds. The constant $C_{\beta, \gamma}$ can depend only on $\beta$ and $\gamma$ (and it is uniform in $f, n, m, \epsilon, \lambda$).
\end{cor}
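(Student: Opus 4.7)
The plan is to reduce directly to Lemma \ref{gammaFjS}. The de la Vall\'ee-Poussin mean $V_nf = \frac{1}{n}\sum_{j=n}^{2n-1}S_jf$ is an average of partial sums $S_jf$ whose indices all lie in $[n,2n-1]$. Under the hypothesis $n\le 50m$, every such index satisfies $j\le 2n-1\le 100m$, so the previous lemma is directly applicable to each individual $S_jf$ appearing in the expansion of $V_nf$ as well as to $S_nf$ itself.

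The first step is to dominate the integrand pointwise by invoking Jensen's inequality (equivalently, the convexity of $t\mapsto t^2$):
\[
|S_nf-V_nf|^2 \le 2|S_nf|^2 + 2|V_nf|^2 \le 2|S_nf|^2 + \frac{2}{n}\sum_{j=n}^{2n-1}|S_jf|^2.
\]
Multiplying by the non-negative weight $|\sigma_m 1_{\overline{\beta F_{\epsilon}}}(y)|^2$ and integrating over $\gamma F_{\epsilon}$, I obtain
\[
\int_{\gamma F_{\epsilon}}|S_nf-V_nf|^2|\sigma_m 1_{\overline{\beta F_{\epsilon}}}|^2\, dy \le 2 I_n + \frac{2}{n}\sum_{j=n}^{2n-1} I_j,
\]
where $I_l := \int_{\gamma F_{\epsilon}}|S_lf|^2|\sigma_m 1_{\overline{\beta F_{\epsilon}}}|^2\,dy$.

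The second step is to apply Lemma \ref{gammaFjS} to each term. Because every index $l\in\{n\}\cup\{n,\dots,2n-1\}$ satisfies $l\le 2n-1\le 100m$, the lemma gives $I_l\le C_{\beta,\gamma}\|f\|_1\lambda$ uniformly. Summing yields a bound of the form $4C_{\beta,\gamma}\|f\|_1\lambda$, which after absorbing the numerical factor into $C_{\beta,\gamma}$ is exactly the claimed estimate. There is no real obstacle here; the only point requiring attention is the book-keeping check that the size constraint $l\le 100m$ of Lemma \ref{gammaFjS} comfortably accommodates the full range $[n,2n-1]$ permitted by the hypothesis $n\le 50m$, which is precisely the reason the constants $50$ and $100$ were chosen with a factor of two between them.
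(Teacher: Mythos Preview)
Your proof is correct and follows essentially the same approach as the paper: both arguments reduce to Lemma \ref{gammaFjS} by expanding $V_nf$ as an average of partial sums $S_lf$ with $n\le l\le 2n-1$, invoking the convexity inequality $\bigl|\tfrac{1}{n}\sum x_i\bigr|^2\le \tfrac{1}{n}\sum|x_i|^2$, and checking that the condition $n\le 50m$ ensures $l\le 2n-1\le 100m$ so that the lemma applies to every term. The only cosmetic difference is that the paper first bounds the $V_nf$ integral separately and then combines it with the $S_nf$ estimate, whereas you bound $|S_nf-V_nf|^2$ directly via $2|S_nf|^2+2|V_nf|^2$; the substance is identical.
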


\begin{proof}
  The equality $V_nf = \frac{1}{n}\sum_{l=n}^{2n-1}S_lf$ and Lemma \ref{gammaFjS} give
\[
\begin{split}
    & \int_{\gamma F_{\epsilon}}|V_nf(y)|^2 |\sigma_m 1_{\overline{\beta F_{\epsilon}}}(y)|^2 dy \\
    & \le
\frac{1}{n}\sum_{l=n}^{2n-1}\int_{\gamma F_{\epsilon}}|S_lf(y)|^2 |\sigma_m 1_{\overline{\beta F_{\epsilon}}}(y)|^2 dy\\
& \le \frac{1}{n}\sum_{l=n}^{2n-1} C_{\beta, \gamma}\|f\|_1\lambda \\
& \le  C_{\beta, \gamma}\|f\|_1\lambda.
\end{split}
\]
 We also used the well-known inequality $\left|\frac{1}{n}\sum_{i=1}^{n} x_i\right|^2 \le \frac{1}{n}\sum_{i=1}^{n} |x_i|^2$ for complex numbers. Then by Lemma \ref{gammaFjS} the proof of Corollary \ref{gammaFjS-V} is complete.
\end{proof}

\section{The sum of integrals on $\gamma F\setminus\gamma F_{\beta_j}$}
This section is  probably the most difficult part of this paper. But its understanding is helped by the reading of the  previous  section.
Similar methods and notation are used in this section.

Throughout  this section let $(n_j)$ be a lacunary sequence of natural numbers. More precisely, $n_{j+1}/n_j \ge 2$ for each $j\in \mathbb{N}$.
Set the sequence $(\beta_j)$ as
$n_j\beta_j = 20(j+1)\log^2(j+1)$ ($j\in\mathbb{N}$) (thus $n_j\beta_j>16$). Let $f\in L^1$ and use the notation of Lemma \ref{cz}.

\begin{lem}\label{gammaF-gammaFjH}
Let $\gamma >5$ be an odd integer,  $N\in\N$. Let $f, g_j\in L^1(T)$ be such that $|g_j|=|f|$ everywhere   for $j=1,\dots, N, \lambda >\|f\|_1/(2\pi)$.
Then the inequality
\[
\sum_{j=1}^{N}\int_{\gamma F\setminus\gamma F_{\beta_j}}|H_{n_j} g_j(y)|^2 dy \le C_{\gamma} N\log^5 (N+1)\|f\|_1\lambda
\]
holds. The constant $C_{\gamma}$ can depend only on $\gamma$ (and it is uniform in $f, (g_j), (n_j), N, \lambda$).
\end{lem}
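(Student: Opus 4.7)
The plan is to follow the structural template of Lemma \ref{gammaFjH2}, applying the Calder\'on--Zygmund decomposition (Lemma \ref{cz}) to $|f|$. Since $|g_j|=|f|$ for every $j$, the family $\F{}$ produced by Lemma \ref{cz} is the same for all $g_j$, and we may write $g_j=g_{j,0}+g_j^0$ with $g_j^0=\sum_{I\in\F{}}g_{j,I}$. The bounded piece is handled at once by the $L^2$-boundedness of the Hilbert transform: $\sum_{j=1}^N\int_{\gamma F\setminus\gamma F_{\beta_j}}|H_{n_j}g_{j,0}|^2\,dy\le C\sum_j\|g_{j,0}\|_2^2\le CN\|f\|_1\lambda$. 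The geometric observation driving everything that follows is that $y\in\gamma F\setminus\gamma F_{\beta_j}$ forces every $I\in\F{}$ with $y\in\gamma I$ to satisfy $|I|\le\beta_j$; combined with the dyadic vanishing analysis already appearing in the proof of Lemma \ref{gammaFjS}, the only ``near'' intervals contributing to $H_{n_j}g_j^0(y)$ have sizes in the window $[c/n_j,\beta_j]$, which contains only $O(\log_2(n_j\beta_j))=O(\log(j+1))$ dyadic scales.

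Next, as in the proof of Lemma \ref{gammaFjH2}, I would expand $|H_{n_j}g_j^0(y)|^2$ as a double sum over $(J,K)\in\F{}\times\F{}$ and split $\gamma F\setminus\gamma F_{\beta_j}$ into the four pieces $A^1,A^2,A^3,A^4$ according to whether $y\in\gamma J$ and $y\in\gamma K$. The ``far--far'' piece $A^4$ carries over verbatim from Lemma \ref{gammaFjH2}: mean-zero of $g_{j,J},g_{j,K}$ together with the $\sin^2$-splitting identity (\ref{1/(y-x0)(y-z0)split_to_sums}) and Lemma \ref{gammaIgammaJ} yield $A^4\le C\|f\|_1\lambda$ for each $j$, hence $CN\|f\|_1\lambda$ after summation. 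For the mixed pieces $A^2,A^3$, the ``far'' interval is treated via the mean-zero plus Taylor estimate (\ref{intK-3I}), while the ``near'' interval lives in the logarithmic-scale window identified above and is bounded via the trivial estimate $|A_I(y)|\le C\lambda n_j|I|$, costing a single $\log(j+1)$ factor through Cauchy--Schwarz.

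The main obstacle is the ``near--near'' piece $A^1$, where both $J$ and $K$ have $y\in\gamma J\cap\gamma K$ and $|J|,|K|\le\beta_j$. In Lemma \ref{gammaFjH2} this was controlled through the Fej\'er-multiplier decay $|\sigma_m 1_{\overline{\beta F_\epsilon}}(y)|^2\le C/(m\max(|I|,|J|,|K|))^2$, which is unavailable here. My plan is to compensate by combining (i) the restriction to $O(\log(j+1))$ near dyadic scales with $O(\gamma)$ neighbours per scale (Lemma \ref{gammaIgammaJ}); (ii) Cauchy--Schwarz in each of $J$ and $K$ paying a logarithm; (iii) the lacunarity assumption $n_{j+1}/n_j\ge 2$, which ensures each fixed dyadic scale is ``near-active'' for only $O(\log N)$ indices $j$; and (iv) the geometric estimate $\sum_{I\in\F{}}|I|\le\|f\|_1/\lambda$ together with the numerical identity $n_j\beta_j=20(j+1)\log^2(j+1)$. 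Careful assembly of these ingredients is expected to yield the $N\log^5(N+1)$ factor, and verifying that no step of the near--near bookkeeping introduces more than a polylog loss is the most delicate point of the proof.
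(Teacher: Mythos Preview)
Your overall structure matches the paper's: Calder\'on--Zygmund decomposition, the four-piece split $A^1_j,\dots,A^4_j$, and the treatment of the bounded piece and of $A^4_j$ are exactly as in the paper. The genuine gap is in the near--near piece $A^1_j$ (and, to a lesser degree, $A^2_j,A^3_j$).

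The ingredients (i)--(iv) you list, combined only with the trivial pointwise bound $|A_J(y)|\le C\lambda n_j|J|$ on both near factors, lose a full power of $N$. After Cauchy--Schwarz over the $O_\gamma(\log(j+1))$ relevant $J$'s you are left with $\sum_J\int_{\gamma J}|A_J(y)|^2\,dy$, and the trivial estimate gives $\int_{\gamma J}|A_J(y)|^2\,dy\le C_\gamma\lambda^2 n_j^2|J|^3$. For the largest active $j$ one has $n_j|J|$ as large as $n_j\beta_j\asymp N\log^2 N$; since the $n_j$'s grow geometrically this term dominates the sum over active $j$'s, and after summing $\sum_J|J|\le|F|$ you end up with $C_\gamma\|f\|_1\lambda\, N^2\log^{O(1)}N$, not $N\log^5 N$. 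Your scale-counting and lacunarity observations (i), (iii) are correct but do not recover the missing factor. The same overcounting occurs in your sketch for $A^2_j$: summing the trivial near bound over $j$ costs a full $N$.

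What the paper does that your plan lacks is twofold. First, it introduces the auxiliary sets $\Delta_j=\gamma F_{16/n_j}\setminus\gamma F_{\beta_j}$ and proves their near-disjointness ($\Delta_j\cap\Delta_{j+k}=\emptyset$ for $k\ge\log^2 N$); this is the geometric form of your (iii) and is what makes the sum over $j$ cost only $\log^2 N$ rather than $N$ in $A^{2,2}_j$. Second, and crucially for $A^{1,2}_j$, the paper treats the two near factors \emph{asymmetrically}: the $J$-factor gets the trivial bound (plus a maximal-interval summation), but for the $K$-factor one applies Fubini and the logarithmic integral
\[
\int_{\Delta_j\cap\gamma K}\Bigl|\int_{K\setminus 3I_{|n_j|}(y)}g_j^0(z)\cot\tfrac{y-z}{2}\,dz\Bigr|\,dy
\le \int_K|g_j^0(z)|\int_{1/n_j<|y-z|\le\gamma\beta_j}\bigl|\cot\tfrac{y-z}{2}\bigr|\,dy\,dz
\le C_\gamma\lambda|K|\log(n_j\beta_j).
\]
This replaces a factor $n_j\beta_j\asymp N\log^2 N$ by $\log(n_j\beta_j)\asymp\log N$, which is exactly the saving your bookkeeping is missing. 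Combined with the bound on the number of $j$'s for which $\gamma K\cap\Delta_j\ne\emptyset$ (at most $(\gamma+1)\log^2 N$), this closes to $C_\gamma N\log^5(N+1)\|f\|_1\lambda$.
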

\begin{proof}
Without loss of generality we can suppose that $f$ is real.  First of all, $N\ge 32$ can be supposed because in the case of $N<32$ we can complement
$n_1, \dots , n_N$ with $n_{N+1}\ge 2n_N, n_{N+2}\ge 2n_{N+1}, \dots, n_{32} \ge  2n_{31}$ and the left hand side of the statement of Lemma \ref{gammaF-gammaFjH}
is increased, while the right hand side is still a constant (depending on $\gamma$) multiplied by $\|f\|_1\lambda$.
Apply Lemma \ref{cz}, that is the Calderon-Zygmund decomposition for the function $g_j$. That is, $g_j = \sum_{i=0}^{\infty}g_{j,i} = g_{j,0}+g_j^0$.
  Since $|g_j|=|f|$ everywhere, the set of intervals $\F{}$ for the function $g_j$ and $f$ are the same.
  Since the Hilbert transform is
  of type $(L^2, L^2)$, one has
\[
\int_{\gamma F\setminus\gamma F_{\beta_j}}|H_{n_j} g_{j,0}(y)|^2 dy \le \|H_{n_j} g_{j,0}\|_2^2 \le C\|g_{j,0}\|_2^2 \le C\|g_{j,0}\|_1\lambda  \le C\|g_{j}\|_1\lambda
\le C\|f\|_1\lambda.
\]
That is, instead of the functions $g_j$ it is enough to investigate $g_j^0$ only.
\[
\begin{split}
    &  \int_{\gamma F\setminus\gamma F_{\beta_j}}|H_{n_j} g_j^0(y)|^2 dy\\
     & = \sum_{J,K\in \F{}}\int_{\gamma F\setminus\gamma F_{\beta_j}}
     \int_{J\setminus 3I_{|n_j|}(y)} g_j^0(x)\cot\left(\frac{y-x}{2}\right) dx
\int_{K\setminus 3I_{|n_j|}(y)} g_j^0(z)\cot\left(\frac{y-z}{2}\right) dz dy.
\end{split}
\]
The sum over pairs $(J,K)\in \F{}\times\F{}$ and the integral $\int_{\gamma F\setminus\gamma F_{\beta_j}}$ will be divided into the following parts:
\[
\begin{split}
    & \sum_{J,K\in \F{}}\int_{\gamma F\setminus\gamma F_{\beta_j}} = \sum_{J,K\in \F{}}\int_{(\gamma F\setminus\gamma F_{\beta_j})\cap \gamma J\cap\gamma K} \\
&    + \sum_{J,K\in \F{}}\int_{((\gamma F\setminus\gamma F_{\beta_j})\cap \gamma J)\setminus\gamma K} \\
 &   + \sum_{J,K\in \F{}}\int_{((\gamma F\setminus\gamma F_{\beta_j})\cap \gamma K)\setminus\gamma J}\\
  &  + \sum_{J,K\in \F{}}\int_{(\gamma F\setminus\gamma F_{\beta_j})\setminus (\gamma J\cup\gamma K)} =:\sum_{i=1}^{4}A_j^i.
    \end{split}
    \]

    \textbf{First, we investigate $A_j^4$.}

    Is it possible that any of the dyadic intervals $I_{|n_j|}(y), I^+_{|n_j|}(y), I^-_{|n_j|}(y)$ is  a subset of $K$, when
    $y\in (\gamma F\setminus\gamma F_{\beta_j})\setminus (\gamma J\cup\gamma K)$? No, because $I^+_{|n_j|}(y)\subset K$ or  $I^-_{|n_j|}(y)\subset K$ would
    give  $y\in I_{|n_j|}(y)\subset \gamma K$ ($\gamma >3$) and this does not hold. Consequently, either
    $K\setminus 3I_{|n_j|}(y)=\emptyset$ or $K\setminus 3I_{|n_j|}(y)=K$. 
    In the first case ($K\setminus 3I_{|n_j|}(y)=\emptyset$) there is nothing to prove. In the second case
($K\setminus 3I_{|n_j|}(y)=K$)
\[
\int_{K\setminus 3I_{|n_j|}(y)} g_j^0(z) dz =0
\]
which gives (as in the proof of Lemma \ref{gammaFjH2}, at (\ref{intK-3I}))
\[
\begin{split}
   &
 \left|\int_{K\setminus 3I_{|n_j|}(y)} g_j^0(z)\cot\left(\frac{y-z}{2}\right) dz\right|
   \le \frac{C|K|}{\sin^2\left(\frac{y-z_0}{2}\right)}\int_{K}|g_j^0(z)| dz
   \le \frac{C\lambda |K|^2}{\sin^2\left(\frac{y-z_0}{2}\right)},
\end{split}
\]
where $z_0$ is the center of $K$.

\noindent
We can say the same with respect to the integral
$ \left|\int_{J\setminus 3I_{|n_j|}(y)} g_j^0(x)\cot\left(\frac{y-x}{2}\right) dx\right|$ and consequently
\[
A_j^4
 \le C\lambda^2 \sum_{ J, K\in \F{}}
 |J|^2 |K|^2
 \int_{(\gamma F\setminus\gamma F_{\beta_j}) \setminus (\gamma J \cup \gamma K)}
\frac{1}{\sin^2\left(\frac{y-x_0}{2}\right) \sin^2\left(\frac{y-z_0}{2}\right)} dy,
\]
where $x_0$ is the center of $J$.
The right hand side of this inequality is almost the same as in (\ref{A4estimation}). More precisely,
the integral in (\ref{A4estimation}) is greater (or the same) since the domain of the integral is larger (or the same).

Consequently, by (\ref{A4estimation2})
\[
\sum_{j=1}^{N}A^4_j \le  C N\|f\|_1\lambda.
\]

\textbf{Next, investigate $A_j^2$.}

We have to integrate with respect to $y$ on the set $((\gamma F\setminus\gamma F_{\beta_j})\cap \gamma J)\setminus\gamma K$. We divide the
set $\gamma F\setminus\gamma F_{\beta_j}$ into two disjoint subsets:
\[
\begin{split}
&
\gamma F\setminus\gamma F_{\beta_j} = \left(\bigcup_{I\in\F{}}\gamma I \setminus \bigcup_{I\in\F{}, |I|>\frac{16}{n_j}}\gamma I\right)
\cup
\left(\bigcup_{I\in\F{}, |I|> \frac{16}{n_j}}\gamma I \setminus \bigcup_{I\in\F{}, |I|>\beta_j}\gamma I\right)\\
& = (\gamma F\setminus\gamma F_{16/n_j}) \cup (\gamma F_{16/n_j}\setminus\gamma F_{\beta_j})\\
& =: (\gamma F\setminus\gamma F_{16/n_j}) \cup \Delta_j.
\end{split}
\]

Split $A_j^2$ with respect to the sets above as
\[
A_j^2 =  \sum_{J,K\in \F{}}\int_{((\gamma F\setminus\gamma F_{16/n_j})\cap \gamma J)\setminus\gamma K} + \sum_{J,K\in \F{}}\int_{(\Delta_j\cap \gamma J)\setminus\gamma K}
=: A_j^{2,1} + A_j^{2,2}.
\]
First, check $A_j^{2,1}$.
If $y\in (\gamma F\setminus\gamma F_{16/n_j})\cap \gamma J $, then $|J|\le \frac{16}{n_j}$ which gives by the definition of $g_j^0$ and Lemma \ref{cz} that
\[
\left|\int_{J\setminus 3I_{|n_j|}(y)} g_j^0(x)\cot\left(\frac{y-x}{2}\right) dx\right| \le Cn_j\int_{J}|g_j^0(x)| dx
\le C\lambda n_j|J|.
\]
And of course $n_j|J| \le 16$ (this will also be needed later). On the other hand, as in the case of $A_j^4$ we have again ($z_0$ is the center of $K$):
\[
 \left|\int_{K\setminus 3I_{|n_j|}(y)} g_j^0(z)\cot\left(\frac{y-z}{2}\right) dz\right|
\le \frac{C\lambda |K|^2}{\sin^2\left(\frac{y-z_0}{2}\right)}.
\]

Recall that  the dyadic interval $\tilde J^{(k)}$  is called $\F{}\setminus \F{16/n_j}$ maximal  if $\tilde J\in \F{}\setminus \F{16/n_j}$ and if there is no interval
$ J\in \F{}\setminus \F{16/n_j}$ such that $\tilde J^{(k)} \subsetneq J^{(k)}$. In the displayed formula below
$\tilde J^{(k)}$  maximal  means $\tilde J^{(k)}$ is  $\F{}\setminus \F{16/n_j}$ maximal.
Recall that two dyadic intervals are disjoint or one of them contains the other. Then by Lemma \ref{gammaIgammaJ} we have

\[
\begin{split}
    & A_j^{2,1} \le
    \sum_{k=1/2-\gamma/2}^{\gamma/2-1/2}\sum_{K\in \F{}}
    \sum_{\tilde J\in\F{}\setminus \F{16/n_j}, \tilde J^{(k)} \mbox{\ {\tiny maximal}}}
    \sum_{J\in\F{},  J^{(k)} \subset\tilde J^{(k)}}
    C\lambda n_j|J|\\
    & \times \int_{((\gamma F\setminus\gamma F_{16/n_j})\cap  J^{(k)})\setminus \gamma K} \frac{C\lambda |K|^2}{\sin^2\left(\frac{y-z_0}{2}\right)} dy\\
& \le
    \sum_{k=1/2-\gamma/2}^{\gamma/2-1/2}\sum_{K\in \F{}}\sum_{\tilde J\in\F{}\setminus \F{16/n_j}, \tilde J^{(k)} \mbox{\ {\tiny maximal}}}\sum_{J\in\F{},  J^{(k)} \subset\tilde J^{(k)}}
    C\lambda n_j|J|\\
    & \times \int_{((\gamma F\setminus\gamma F_{16/n_j})\cap  \tilde J^{(k)})\setminus \gamma K} \frac{C\lambda |K|^2}{\sin^2\left(\frac{y-z_0}{2}\right)} dy\\
& \le
    \sum_{k=1/2-\gamma/2}^{\gamma/2-1/2}\sum_{K\in \F{}}\sum_{\tilde J\in\F{}\setminus \F{16/n_j}, \tilde J^{(k)} \mbox{\ {\tiny maximal}}}
    C_{\gamma}\lambda n_j|\tilde J| \int_{((\gamma F\setminus\gamma F_{16/n_j})\cap  \tilde J^{(k)})\setminus \gamma K} \frac{C\lambda |K|^2}{\sin^2\left(\frac{y-z_0}{2}\right)} dy.\\
\end{split}
\]

Now, we use the fact that $n_j|\tilde J| \le 16$ and also that the maximal dyadic intervals (with respect to any fixed subset of $\F{}$) 
are disjoint. Thus,
\[
\begin{split}
    & A_j^{2,1} \le C_{\gamma}\sum_{K\in \F{}}\int_{(\gamma F\setminus\gamma F_{16/n_j})\setminus \gamma K}
\frac{C\lambda^2 |K|^2}{\sin^2\left(\frac{y-z_0}{2}\right)} dy\\
& \le C_{\gamma}\lambda^2\sum_{K\in \F{}} |K|^2\int_{T\setminus\gamma K}
\frac{1}{\sin^2\left(\frac{y-z_0}{2}\right)} dy\\
& \le C_{\gamma}\lambda^2\sum_{K\in \F{}} |K|^2\int_{|K|}^{\infty}\frac{1}{t^2} dt \\
& \le C_{\gamma}\|f\|_1\lambda.
\end{split}
\]

Now, turn our attention to $A_j^{2,2}$. That is,  check the integral with respect to $y$ on the set $\Delta_j$.
Since $y\in \Delta_j$, then $y\notin \bigcup_{I\in \F{}, |I|>\beta_j}\gamma I 
= \gamma F_{\beta_j}$ and $y\in \gamma J$ give $J\notin \F{\beta_j}$, that is,
$|J|\le \beta_j$ and consequently,
\[
\left|\int_{J\setminus 3I_{|n_j|}(y)} g_j^0(x)\cot\left(\frac{y-x}{2}\right) dx\right| \le C n_j|J| |J|^{-1}\int_{J}|g_j^0|
\le C\lambda n_j|J|.
\]
Later on, we will also use that $n_j|J| \le n_j\beta_j =  20(j+1)\log^2 (j+1) \le 40N\log^2(N+1)$.

Moreover, as at the beginning of the investigation of the case of $A_j^4$ we have again that
since $y\notin \gamma K$,  either
$K\setminus 3I_{|n_j|}(y)=\emptyset$ or $K\setminus 3I_{|n_j|}(y)=K$. In both cases
we have again (in the first case there is nothing to prove)
\[
 \left|\int_{K\setminus 3I_{|n_j|}(y)} g_j^0(z)\cot\left(\frac{y-z}{2}\right) dz\right|
 \le \frac{C\lambda |K|^2}{\sin^2\left(\frac{y-z_0}{2}\right)}
\]
($z_0$ is the center of $K$). So, we have

\[
\begin{split}
   &
\int_{(\Delta_j\cap \gamma J)\setminus \gamma K} \left|\int_{J\setminus 3I_{|n_j|}(y)} g_j^0(x)\cot\left(\frac{y-x}{2}\right) dx\right|
 \left|\int_{K\setminus 3I_{|n_j|}(y)} g_j^0(z)\cot\left(\frac{y-z}{2}\right) dz\right| dy\\
 & \le C\lambda^2 n_j|J| |K|^2 \int_{(\Delta_j\cap \gamma J)\setminus\gamma K}
 \frac{1}{\sin^2\left(\frac{y-z_0}{2}\right)} dy.
\end{split}
\]
Following the already known steps, by Lemma \ref{gammaIgammaJ}, by the inequality $n_j\beta_j \le 40N\log^2(N+1)$ and by the fact that the maximal (with respect to $\F{}\setminus \F{\beta_j}$) dyadic intervals $\tilde J^{(k)}$ ($k$ is fixed) are disjoint we get

\[
\begin{split}
   & \sum_{J\in \F{}, |J|\le \beta_j} n_j|J| |K|^2 \int_{(\Delta_j\cap  \gamma J)\setminus\gamma K}
 \frac{1}{\sin^2\left(\frac{y-z_0}{2}\right)} dy \\
   & \le
\sum_{k=1/2-\gamma/2}^{\gamma/2-1/2}\sum_{\tilde J\in\F{}\setminus \F{\beta_j}, \tilde J^{(k)} \mbox{\ {\tiny maximal}}}\sum_{J\in\F{},  J^{(k)} \subset\tilde J^{(k)}}
   n_j|J| |K|^2 \int_{(\Delta_j\cap  J^{(k)})\setminus\gamma K}
 \frac{1}{\sin^2\left(\frac{y-z_0}{2}\right)} dy \\
   & \le
\sum_{k=1/2-\gamma/2}^{\gamma/2-1/2}\sum_{\tilde J\in\F{}\setminus \F{\beta_j}, \tilde J^{(k)} \mbox{\ {\tiny maximal}}}\sum_{J\in\F{},  J^{(k)} \subset\tilde J^{(k)}}
   n_j|J| |K|^2 \int_{(\Delta_j\cap  \tilde J^{(k)})\setminus\gamma K}
 \frac{1}{\sin^2\left(\frac{y-z_0}{2}\right)} dy \\
   & \le
C_{\gamma}\sum_{k=1/2-\gamma/2}^{\gamma/2-1/2}\sum_{\tilde J\in\F{}\setminus \F{\beta_j}, \tilde J^{(k)} \mbox{\ {\tiny maximal}}}
   n_j|\tilde J| |K|^2 \int_{(\Delta_j\cap  \tilde J^{(k)})\setminus\gamma K}
 \frac{1}{\sin^2\left(\frac{y-z_0}{2}\right)} dy \\
   & \le
C_{\gamma}\sum_{k=1/2-\gamma/2}^{\gamma/2-1/2}\sum_{\tilde J\in\F{}\setminus \F{\beta_j}, \tilde J^{(k)} \mbox{\ {\tiny maximal}}}
   N\log^2 (N+1) |K|^2 \int_{(\Delta_j\cap  \tilde J^{(k)})\setminus\gamma K}
 \frac{1}{\sin^2\left(\frac{y-z_0}{2}\right)} dy \\
 & \le
C_{\gamma}  N\log^2 (N+1) |K|^2 \int_{\Delta_j\setminus\gamma K}
 \frac{1}{\sin^2\left(\frac{y-z_0}{2}\right)} dy.
\end{split}
\]

Recall that
\begin{equation}\label{Delta_j}
\Delta_j = \left(\bigcup_{I\in\F{}, |I|> \frac{16}{n_j}}\gamma I \setminus \bigcup_{I\in\F{}, |I|>\beta_j}\gamma I\right)
= \gamma F_{16/n_j}\setminus\gamma F_{\beta_j}.
\end{equation}

We prove that for $k\ge \log^2 N$ ($N\ge 32$) $\Delta_{j+k}$ and  $\Delta_{j}$ are disjoint for $j\le N$.
This follows from the lacunarity of the  sequences $(n_j)$, that is, from $n_{j+1} \ge 2n_j$, $n_{j+k}\ge 2^{k}n_j > 4(j+k+1)\log^2(j+k+1)n_j$ because
$2^{k} >  4(j+k+1)\log^2(j+k+1)$ for every $k\ge \log^2N, j\le N$ and $N\ge 32$.
That is, in this case $\beta_{j+k} < \frac{16}{n_j}$ and this shows that $\Delta_{j+k}\cap\Delta_j=\emptyset$ for $k\ge \log^2 N$.
This implies $\Delta_{a\lceil\log^2 N\rceil +b}\cap \Delta_{(a-1)\lceil \log^2 N\rceil +b}=\emptyset, b<\lceil\log^2 N\rceil, a\lceil\log^2 N\rceil+b\le N$ and consequently

\[
\begin{split}
    & C_{\gamma}\sum_{j=1}^{N} N\log^2 (N+1) \lambda^2 \sum_{K\in \F{}}|K|^2 \int_{\Delta_j\setminus\gamma K}
 \frac{1}{\sin^2\left(\frac{y-z_0}{2}\right)} dy\\
     & \le C_{\gamma}N\log^2 (N+1) \lambda^2 \sum_{K\in \F{}}|K|^2 \log^2 (N+1) \int_{T\setminus \gamma K}\frac{1}{\sin^2\left(\frac{y-z_0}{2}\right)} dy \\
     & \le C_{\gamma}N\log^4 (N+1) \lambda^2 \sum_{K\in \F{}}|K|
      \le C_{\gamma}N\log^4 (N+1) \|f\|_1 \lambda.
\end{split}
\]
Summarizing our achievements we get
\[
\sum_{j=1}^{N} A_j^{2} = \sum_{j=1}^{N} A_j^{2,1}  + \sum_{j=1}^{N} A_j^{2.2}  \le C_{\gamma}N\log^4 (N+1) \|f\|_1 \lambda
\]
and similarly $\sum_{j=1}^{N} A_j^3 \le C_{\gamma}N\log^4 (N+1) \|f\|_1 \lambda$.

\textbf{Finally, investigate $A_j^1$.}
That is, give an estimation for the sum of integrals
\[
\begin{split}
    &
\sum_{J,K\in \F{}}\int_{(\gamma F\setminus\gamma F_{\beta_j})\cap \gamma J\cap \gamma K}
\int_{J\setminus 3I_{|n_j|}(y)} g_j^0(x)\cot\left(\frac{y-x}{2}\right) dx\\
& \times\int_{K\setminus 3I_{|n_j|}(y)} g_j^0(z)\cot\left(\frac{y-z}{2}\right) dz dy
 =:A_j^1.
\end{split}
\]
We split the set
\[
\gamma F\setminus\gamma F_{\beta_j}  = (\gamma F\setminus\gamma F_{16/n_j}) \cup (\gamma F_{16/n_j}\setminus\gamma F_{\beta_j})
 = (\gamma F\setminus\gamma F_{16/n_j}) \cup \Delta_j
\]
as above again and estimate the integrals on the set $\gamma F\setminus\gamma F_{16/n_j}$ (this will be $A_j^{1,1}$) then on $\Delta_j$ (and that will be $A_j^{1,2}$).

If $y\in \gamma F\setminus\gamma F_{16/n_j}$, then $y\notin \gamma L$ for any $L\in \F{16/n_j}$. Consequently, $y\in (\gamma F\setminus\gamma F_{16/n_j}) \cap \gamma J\cap \gamma K$ gives
$|J|, |K| \le \frac{16}{n_j}$ and
\[
\begin{split}
    & \sum_{j=1}^{N} A_j^{1,1} \\
    & \le  \sum_{j=1}^{N}  \sum_{J,K\in \F{}}
    \int_{(\gamma F\setminus\gamma F_{16/n_j})\cap \gamma J\cap \gamma K}
\left|\int_{J\setminus 3I_{|n_j|}(y)} g_j^0(x)\cot\left(\frac{y-x}{2}\right) dx\right|\\
& \times \left|\int_{K\setminus 3I_{|n_j|}(y)} g_j^0(z)\cot\left(\frac{y-z}{2}\right) dz \right| dy \\
 &  \le C\sum_{j=1}^{N}  \sum_{J,K\in \F{}}
    \int_{(\gamma F\setminus\gamma F_{16/n_j})\cap \gamma J\cap \gamma K}
   n_j|J| \lambda   n_j|K| \lambda dy \\
     & \le C\lambda^2\sum_{j=1}^{N}\sum_{J,K\in \F{}} |(\gamma F\setminus\gamma F_{16/n_j})\cap \gamma J\cap \gamma K|\\
     & \le C_{\gamma}\lambda^2 N |F|\\
     &  \le C_{\gamma}N \|f\|_1\lambda
\end{split}
\]
as it comes from the method already used several times (see e.g. Lemma \ref{gammaIgammaJ}).

Finally, investigate
\[
\begin{split}
    & \sum_{j=1}^{N} A_j^{1,2} \\
    & \le  \sum_{j=1}^{N}  \sum_{J,K\in \F{}}
    \int_{(\gamma F_{16/n_j}\setminus\gamma F_{\beta_j})\cap \gamma J\cap \gamma K}
\left|\int_{J\setminus 3I_{|n_j|}(y)} g_j^0(x)\cot\left(\frac{y-x}{2}\right) dx\right|\\
& \times \left|\int_{K\setminus 3I_{|n_j|}(y)} g_j^0(z)\cot\left(\frac{y-z}{2}\right) dz \right| dy.
\end{split}
\]

Since $y\in (\gamma F_{16/n_j}\setminus\gamma F_{\beta_j})\cap \gamma J = \Delta_j\cap \gamma J$, one has $y\notin \gamma F_{\beta_j}$ and consequently
$J\notin \F{\beta_j}$, that is, $|J|\le \beta_j$. By this fact we have

\[
\left|\int_{J\setminus 3I_{|n_j|}(y)} g_j^0(x)\cot\left(\frac{y-x}{2}\right) dx\right| \le Cn_j|J|\lambda.
\]
Later on, we also will use that $ n_j|J|\le n_j\beta_j = 20(j+1)\log^2 (j+1) \le 40N\log^2 (N+1)$.
Thus,  the already known method gives an estimation for
\[
\sum_{j=1}^{N} A_j^{1,2} \le C\lambda\sum_{j=1}^{N}\sum_{J,K\in \F{}, |J|\le \beta_j}n_j|J|\int_{\Delta_j\cap \gamma J\cap \gamma K}
    \left|\int_{K\setminus 3I_{|n_j|}(y)} g_j^0(z)\cot\left(\frac{y-z}{2}\right) dz \right| dy.
\]

That is,
\[
\begin{split}
    & \sum_{j=1}^{N} A_j^{1,2}
     \le  C\lambda \sum_{j=1}^{N}\sum_{K\in \F{}}
\sum_{k=1/2-\gamma/2}^{\gamma/2-1/2}\sum_{\tilde J\in\F{}, |\tilde J|\le \beta_j, \tilde J^{(k)} \mbox{\ {\tiny maximal}}}\sum_{J\in\F{},  J^{(k)} \subset\tilde J^{(k)}}
     n_j|J| \\
    & \times
    \int_{\Delta_j\cap J^{(k)}\cap \gamma K}
    \left|\int_{K\setminus 3I_{|n_j|}(y)} g_j^0(z)\cot\left(\frac{y-z}{2}\right) dz \right| dy \\
& \le  C_{\gamma}\lambda \sum_{j=1}^{N}\sum_{K\in \F{}}\sum_{k=1/2-\gamma/2}^{\gamma/2-1/2}
\sum_{\tilde J\in\F{}, |\tilde J|\le \beta_j, \tilde J^{(k)} \mbox{\ {\tiny maximal}}}n_j|\tilde J|
    \\
& \times    \int_{\Delta_j\cap \tilde J^{(k)}\cap \gamma K}
    \left|\int_{K\setminus 3I_{|n_j|}(y)} g_j^0(z)\cot\left(\frac{y-z}{2}\right) dz \right| dy \\
&  \le  C_{\gamma}\lambda N\log^2 (N+1) \sum_{j=1}^{N} \sum_{K\in \F{}}\sum_{k=1/2-\gamma/2}^{\gamma/2-1/2}
\sum_{\tilde J\in\F{}, |\tilde J|\le \beta_j, \tilde J^{(k)} \mbox{\ {\tiny maximal}}}\\
&  \times \int_{\Delta_j\cap \tilde J^{(k)}\cap \gamma K}
    \left|\int_{K\setminus 3I_{|n_j|}(y)} g_j^0(z)\cot\left(\frac{y-z}{2}\right) dz \right| dy \\
&  \le  C_{\gamma}\lambda N\log^2 (N+1) \sum_{K\in \F{}}\sum_{j=1}^{N}
        \int_{\Delta_j\cap  \gamma K}
    \left|\int_{K\setminus 3I_{|n_j|}(y)} g_j^0(z)\cot\left(\frac{y-z}{2}\right) dz \right| dy \\
    \end{split}
\]
because by Lemma \ref{gammaIgammaJ} we have $\sum_{J\in\F{},  J^{(k)} \subset\tilde J^{(k)}}|J|\le C_{\gamma} |\tilde J^{(k)}| \le C_{\gamma}\beta_j$,
and because the $\F{}\setminus \F{\beta_j}$ maximal dyadic intervals $\tilde J^{(k)}$ are disjoint.

 Recall that the sets $\Delta_j$ are not necessarily disjoint, but ``nearly disjoint''. That is, $\Delta_{j+k}\cap \Delta_j=\emptyset$ for $k\ge \log^2 N$
 ($N\ge j, 32$).
 Besides, for each fixed $K\in \F{}$ there are at most $(\gamma +1)\log^2 N$ 
  $\Delta_j$-s such that $\gamma K\cap \Delta_j\not=\emptyset$. This comes as follows.
 We have two possible situations. $|K|>16/n_j$ and $|K|\le 16/n_j$.
 Let $j$ be the smallest index for which  $|K|>16/n_j$ and $\gamma K\cap \Delta_j\not= \emptyset$. In this case $|K|>16/n_j>\beta_{j+k}$
 for $k\ge \log^2 N$ and consequently $\gamma K\cap \Delta_{j+k} = \emptyset$.
If $|K|\le 16/n_j$  (and $\gamma K\cap \Delta_j\not= \emptyset$), then one of the $K^{(i)}$'s, say $K^{(i_j)}$ must be part of $\Delta_j$, and so those $\Delta_j$'s for which $i_j$ is the same, will intersect,
so the claim follows from the earlier proven fact about the disjointness of the different  $\Delta_j$'s.

Take such a $\Delta_j$ and let $y\in \gamma K\cap \Delta_j$ and $z\in K\setminus 3I_{|n_j|}(y)$. This latter gives
 $|y-z|> 2\pi/2^{|n_j|} > 1/n_j$. Besides, $z\in K, y\in \gamma K$ gives $|y-z|\le \gamma |K|\le \gamma \beta_j$ because $\gamma K\cap \Delta_j\not=\emptyset$ also means $|K|\le \beta_j$.

That is, $ 1/n_j < |y-z|\le \gamma\beta_j$ and consequently
\[
\begin{split}
    &
\int_{\Delta_j\cap \gamma K}
    \left|\int_{K\setminus 3I_{|n_j|}(y)} g_j^0(z)\cot\left(\frac{y-z}{2}\right) dz \right| dy\\
& \le \int_{K} |g_j^0(z)|\int_{\set{y: \frac{1}{n_j} < |y-z|\le \gamma\beta_j}} \left|\cot\left(\frac{y-z}{2}\right) \right| dy dz\\
& \le C_{\gamma}\int_{K} |g_j^0(z)|\log (\beta_jn_j) dz \\
& \le C_{\gamma}\lambda |K|\log (N+1).
\end{split}
\]
Moreover,  by what is written above we get
\[
\sum_{j=1}^{N}\int_{\Delta_j\cap \gamma K}
    \left|\int_{K\setminus 3I_{|n_j|}(y)} g_j^0(z)\cot\left(\frac{y-z}{2}\right) dz \right| dy \le C_{\gamma}|K|\lambda \log^3 (N+1).
\]
This inequality finally gives
\[
\sum_{j=1}^{N}A_j^{1,2}  \le C_{\gamma}N\log^5 (N+1)\|f\|_1\lambda.
\]
That is, the proof of Lemma \ref{gammaF-gammaFjH} is complete.
\end{proof}
We go further in section $4$ in the investigation of integrals on the set $\gamma F\setminus\gamma F_{\beta_j}$. The next lemma to be proved is:

\begin{lem}\label{gammaF-gammaFjS}
Let $\gamma >5$ be an odd integer,  $N\in\N$. Let $f\in L^1(T), \lambda >\|f\|_1/(2\pi)$, $n_j\le l_j\le 2n_j$ be natural numbers  $n_j\beta_j = 20(j+1)\log^2 (j+1)$ for $j=1,\dots, N$.
Then the equality
\[
\sum_{j=1}^{N}\int_{\gamma F\setminus\gamma F_{\beta_j}}|S_{l_j} f(y)|^2 dy \le C_{\gamma} N\log^5 (N+1)\|f\|_1\lambda
\]
holds. The constant $C_{\gamma}$ can depend only on $\gamma$ (and it is uniform in $f, (l_j), (n_j), N, \lambda$).
\end{lem}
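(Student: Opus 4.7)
The plan is to mirror the proof of Lemma~\ref{gammaFjS}, with Lemma~\ref{gammaF-gammaFjH} playing the role formerly played by Lemma~\ref{gammaFjH2}. Write $S_{l_j}f = \tilde S_{l_j}f + (S_{l_j}f-\tilde S_{l_j}f)$ with $\tilde S_{l_j}f$ as in (\ref{modified_partial}). By (\ref{SltildeSldifference}) the second summand is dominated by $E_{l_j}|f|$, so $|S_{l_j}f|^2\le 2|\tilde S_{l_j}f|^2 + 2(E_{l_j}|f|)^2$, and it suffices to bound each of the two resulting pieces on $\gamma F\setminus\gamma F_{\beta_j}$, summed in $j$, by $C_\gamma N\log^5(N+1)\|f\|_1\lambda$.

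For $\tilde S_{l_j}f$ I invoke the pointwise inequality (\ref{^2estimation_on_modified_partial}):
\[
|\tilde S_{l_j}f(y)|^2 \le \|f\|_1^2 + |H_{l_j}(f(\cdot)e^{-\imath(l_j+1)\cdot})(y)|^2 + |H_{l_j}(f(\cdot)e^{\imath l_j\cdot})(y)|^2.
\]
The $\|f\|_1^2$ contribution integrates over $\gamma F\setminus\gamma F_{\beta_j}$ to at most $\gamma|F|\|f\|_1^2\le \gamma\|f\|_1^3/\lambda\le C\gamma\|f\|_1\lambda$ (using $\|f\|_1<2\pi\lambda$), giving $C_\gamma N\|f\|_1\lambda$ after summation. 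For each Hilbert-type term, set $g_j(x):=f(x)e^{-\imath(l_j+1)x}$ (respectively $g_j(x):=f(x)e^{\imath l_j x}$); since $|g_j|=|f|$, the dyadic family $\F{}$ is unchanged and Lemma~\ref{gammaF-gammaFjH} applies. The only subtlety is that $H_{l_j}$ involves the cut-off scale $|l_j|\in\{|n_j|,|n_j|+1\}$ rather than $|n_j|$, but the proof of Lemma~\ref{gammaF-gammaFjH} is insensitive to this one-step dyadic shift: the key constants $16$ in $n_j|J|\le 16$ and $40N\log^2(N+1)$ in $n_j\beta_j\le 40N\log^2(N+1)$ simply double, and the bound $\le C_\gamma N\log^5(N+1)\|f\|_1\lambda$ persists.

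For the local correction $E_{l_j}|f|$, apply Lemma~\ref{cz} to $u:=|f|$, obtaining $u=u_0+u^0$ with $\|u_0\|_\infty\le 2\lambda$ and the same $\F{}$ as for $f$. For $u_0$ one has $(E_{l_j}u_0(y))^2\le C\lambda\,E_{l_j}u_0(y)$ (since $l_j|3I_{|l_j|}(y)|\le 12\pi$), and $\int_T E_{l_j}u_0\le C\|u_0\|_1\le C\|f\|_1$ by Fubini, giving $\le C\|f\|_1\lambda$ per $j$ and $CN\|f\|_1\lambda$ in total. For the oscillating part $u^0$, expand the square, decompose over pairs $(J,K)\in\F{}\times\F{}$, and split the domain into four pieces $A_j^1,\ldots,A_j^4$ exactly as in Lemma~\ref{gammaF-gammaFjH}. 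Imitating the $E_l$ step of Lemma~\ref{gammaFjS}, whenever $y\notin\gamma J$ the dyadic structure forces either $J\cap 3I_{|l_j|}(y)=\emptyset$ or $J\cap 3I_{|l_j|}(y)=J$, and in the latter case $\int_J u^0=0$ by Lemma~\ref{cz}; hence $A_j^2=A_j^3=A_j^4=0$, and only $A_j^1$ (where $y\in\gamma J\cap\gamma K$) requires estimation. This is carried out exactly as in Lemma~\ref{gammaF-gammaFjH} by further splitting $\gamma F\setminus\gamma F_{\beta_j}=(\gamma F\setminus\gamma F_{16/n_j})\cup\Delta_j$ with $\Delta_j:=\gamma F_{16/n_j}\setminus\gamma F_{\beta_j}$, using $l_j|J|\le 32$ on the first range and $l_j|J|\le 80N\log^2(N+1)$ on the second (both valid since $l_j\le 2n_j$), and combining Lemma~\ref{gammaIgammaJ} with the ``nearly disjoint'' property that any fixed $K\in\F{}$ meets $\gamma K\cap\Delta_j\ne\emptyset$ for at most $O_\gamma(\log^2 N)$ indices~$j$.

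The main obstacle is the contribution of $E_{l_j}u^0$ on $\Delta_j$: the product of $l_j|J|=O(N\log^2(N+1))$ with the combinatorial count $O(\log^2 N)$ of relevant $j$'s produces exactly the $\log^5(N+1)$ factor claimed and duplicates the $A_j^{1,2}$ bookkeeping of Lemma~\ref{gammaF-gammaFjH}. Apart from this adaptation, no genuinely new ideas beyond those already used in Lemmas~\ref{gammaFjS} and~\ref{gammaF-gammaFjH} are required.
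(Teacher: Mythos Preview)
Your proposal is correct and follows the paper's overall strategy: reduce $S_{l_j}f$ to a modified partial sum plus a local correction $E_{l_j}|f|$, handle the former via Lemma~\ref{gammaF-gammaFjH}, and treat $E_{l_j}$ by the $A_j^1$--$A_j^4$ decomposition with $A_j^2=A_j^3=A_j^4=0$. Two points deserve comment.

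First, a cosmetic difference: the paper does not invoke the robustness of Lemma~\ref{gammaF-gammaFjH} under the shift $|n_j|\mapsto|l_j|$. Instead it \emph{redefines} $\tilde S_{l_j}f(y):=\frac{1}{\pi}\int_{T\setminus 3I_{|n_j|}(y)}f(x)D_{l_j}(y-x)\,dx$ and $E_{l_j}f(y):=l_j\int_{3I_{|n_j|}(y)}f$, so that $H_{n_j}$ (not $H_{l_j}$) appears and Lemma~\ref{gammaF-gammaFjH} applies verbatim. Your adaptation argument is valid, but the paper's device is cleaner.

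Second, and more substantively, for the contribution of $E_{l_j}$ on $\Delta_j=\gamma F_{16/n_j}\setminus\gamma F_{\beta_j}$ the paper does \emph{not} simply replay the $A_j^{1,2}$ bookkeeping of Lemma~\ref{gammaF-gammaFjH}; it invokes a separate technical result, Lemma~\ref{deltaint}, whose proof introduces a further split $\Delta_j'\cup\Delta_j''$ according to whether $n_j\int_{3I_{|n_j|}(y)}|f|>50\lambda$, and then a careful Fubini argument on the $\mathcal A_{|n_j|}$--measurable set $\Delta_j'$. Your route---bound one factor by $l_j|J|\le 2n_j\beta_j\le CN\log^2(N+1)$ via the maximal-interval trick, then control $\sum_j\int_{\Delta_j\cap\gamma K}l_j\int_{K\cap 3I}|u^0|\,dy$ using the $O_\gamma(\log^2 N)$ count of relevant $j$'s---does work, but you have left implicit the step that for each such $j$ one has $\int_{\Delta_j\cap\gamma K}l_j\int_{K\cap 3I_{|l_j|}(y)}|u^0|\,dy\le C\lambda|K|$; this follows from Fubini and $|\{y:z\in 3I_{|l_j|}(y)\}|\le C/l_j$, and is the analogue of the $\int|\cot|\le C\log(n_j\beta_j)$ estimate in the Hilbert case. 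Once this is made explicit, your argument is actually shorter than the paper's Lemma~\ref{deltaint} and yields the slightly better exponent $\log^4(N+1)$ for the $E_{l_j}$ piece.
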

\begin{proof}
Similarly, as in the proof of Lemma \ref{gammaFjS}, but not the very same way (compare with (\ref{modified_partial}) and see the domain of the integral) we define
\[
\tilde S_{l_j} f(y) := \frac{1}{\pi}\int_{T\setminus 3I_{|n_j|}(y)} f(x)D_{l_j}(y-x) dx.
\]
From the definition of the Hilbert transform (\ref{hilbert_def})
in the same way as  (\ref{^2estimation_on_modified_partial}) is given we have
\[
|\tilde S_{l_j} f(y) |^2 \le \|f\|_1^2 + \left|H_{n_j}(f(\cdot)e^{-\imath(l_j+1)\cdot})(y)\right|^2 + \left|H_{n_j}(f(\cdot)e^{\imath l_j\cdot})(y)\right|^2.
\]
For $g_j(x) = f(x)e^{-\imath(l_j+1)x}$ and $h_j(x) = f(x)e^{\imath l_jx}$ we can apply  Lemma \ref{cz} and then we get that the set $\F{}$ is the same for $g_j, h_j$ and $f$
since their absolute values coincide. Then we can apply Lemma \ref{gammaF-gammaFjH} for them and this implies
\[
\begin{split}
   &
\sum_{j=1}^{N}\int_{\gamma F\setminus\gamma F_{\beta_j}}|\tilde S_{l_j} f(y)|^2 dy \\
& \sum_{j=1}^{N}\left(\|f\|_1^2 + \int_{\gamma F\setminus\gamma F_{\beta_j}}|H_{n_j} g_j(y)|^2 dy
+ \int_{\gamma F\setminus\gamma F_{\beta_j}}|H_{n_j} h_j(y)|^2 dy \right)\\
& \le N\|f\|_1^2 +  C_{\gamma} N\log^5 (N+1)\|f\|_1\lambda \le C_{\gamma} N\log^5 (N+1)\|f\|_1\lambda.
\end{split}
\]
Now, we have to check the difference of $S_{l_j}f$ and $\tilde S_{l_j}f$. Setting
\[
E_{l_j}f(y) := l_j\int_{3I_{|n_j|}(y)}f(x) dx
\]
(a bit different then it was in (\ref{Eldef})) we have (similarly as in (\ref{SltildeSldifference}))

\[
\left|S_{l_j}f(y)-\tilde S_{l_j}f(y)\right|
\le
\frac{1}{\pi}\left|\int_{3I_{|n_j|}(y)}f(x)D_{l_j}(y-x) dx\right| \le \frac{l_j+1/2}{\pi}\int_{3I_{|n_j|}(y)}|f(x)| dx \le E_{l_j}|f|(y).
\]
In the sequel we prove
\[
\sum_{j=1}^{N}\int_{\gamma F\setminus\gamma F_{\beta_j}}|E_{l_j} f(y)|^2 dy \le C_{\gamma} N\log^4 (N+1)\|f\|_1\lambda
\]
for every $f\in L^1(T)$. This inequality applied to the  function $|f|\in L^1(T)$ would complete the proof of Lemma \ref{gammaF-gammaFjS}.
(Set $\F{}$ (with fixed $\lambda$) is the same for $f$ and $|f|$.)

See again Lemma \ref{cz} for $f=\sum_{i=0}^{\infty}f_i = f_0+f^0$. Similarly, as in the proof of Lemma \ref{gammaFjS} we have
$|E_{l_j}f_0(y)|^2 \le 24\pi\|f_0\|_{\infty} |E_{l_j}f_0(y)|$ and consequently,
\[
\int_{\gamma F\setminus\gamma F_{\beta_j}}|E_{l_j} f_0(y)|^2 dy \le C\lambda\|E_{l_j} f_0\|_1 \le C\lambda\|f\|_1.
\]

Next, investigate $f^0$.

As earlier in this paper
\[
\begin{split}
     &
\sum_{j=1}^{N}\int_{\gamma F\setminus\gamma F_{\beta_j}}|E_{l_j} f^0(y)|^2 dy \\
& \le
C\sum_{j=1}^{N}\sum_{J,K\in\F{}} \int_{\gamma F\setminus\gamma F_{\beta_j}}l_j^2
\left|\int_{J\cap 3I_{|n_j|}(y)}f^0(x) dx\right|
\left|\int_{K\cap 3I_{|n_j|}(y)}f^0(z) dz\right| dy =: \sum_{j=1}^{N}A_j.
\end{split}
\]

The sum over pairs $(J,K)\in \F{}\times\F{}$ and the integral $\int_{\gamma F\setminus\gamma F_{\beta_j}}$ will be divided into the following parts:
\[
\begin{split}
    & \sum_{J,K\in \F{}}\int_{\gamma F\setminus\gamma F_{\beta_j}} = \sum_{J,K\in \F{}}\int_{(\gamma F\setminus\gamma F_{\beta_j})\cap \gamma J\cap\gamma K} \\
&    + \sum_{J,K\in \F{}}\int_{((\gamma F\setminus\gamma F_{\beta_j})\cap \gamma J)\setminus\gamma K} \\
 &   + \sum_{J,K\in \F{}}\int_{((\gamma F\setminus\gamma F_{\beta_j})\cap \gamma K)\setminus\gamma J}\\
  &  + \sum_{J,K\in \F{}}\int_{(\gamma F\setminus\gamma F_{\beta_j})\setminus (\gamma J\cup\gamma K)} =:\sum_{i=1}^{4}A_j^i.
    \end{split}
    \]
First, check $A_j^3$.

In this case $y\in (\gamma F\setminus\gamma F_{\beta_j})\setminus \gamma J$. That is, $y\notin \gamma J$. If
$|I_{|n_j|}(y)| =2\pi/2^{|n_j|}  \le |J|$, then $3I_{|n_j|}(y)\cap J =\emptyset$ and consequently
$\int_{J\cap 3I_{|n_j|}(y)}f^0(x) dx=0$.
If $|I_{|n_j|}(y)| > |J|$, then $I_{|n_j|}(y) \subset J$ is not possible and consequently either $J\cap I_{|n_j|}(y) = J$ or $J\cap I_{|n_j|}(y) = \emptyset$.  In both cases
$\int_{J\cap I_{|n_j|}(y)}f^0(x) dx=0$. The same can be said about the intervals $ I^+_{|n_j|}(y)$ and $ I^-_{|n_j|}(y)$. That is,
$\int_{J\cap 3I_{|n_j|}(y)}f^0(x) dx=0$ in every case when $y\notin \gamma J$.
This gives that $A_j^3=0$ and also that  $A_j^4=0$ and similarly (applying this procedure for $K$ instead of $J$) that  $A_j^2=0$.
That is, it is remained to investigate
\[
\begin{split}
     &
\sum_{j=1}^{N}A_j = \sum_{j=1}^{N}A_j^1 \\
& = \sum_{j=1}^{N}\sum_{J,K\in \F{}}
\int_{(\gamma F\setminus\gamma F_{\beta_j})\cap \gamma J\cap\gamma K}l_j^2
\left|\int_{J\cap 3I_{|n_j|}(y)}f^0(x) dx\right|
\left|\int_{K\cap 3I_{|n_j|}(y)}f^0(z) dz\right| dy.
\end{split}
\]
Recall that
\[
\gamma F\setminus\gamma F_{\beta_j} = \left(\bigcup_{I\in\F{}}\gamma I \setminus \bigcup_{I\in\F{}, |I|>\frac{16}{n_j}}\gamma I\right)
\cup \left(\bigcup_{I\in\F{}, |I|> \frac{16}{n_j}}\gamma I \setminus \bigcup_{I\in\F{}, |I|>\beta_j}\gamma I\right)
 = \left(\gamma F\setminus\gamma F_{16/n_j}\right) \cup \Delta_j.
 \]

 Either $|J|\le 16/n_j$ or  $(\gamma F\setminus\gamma F_{16/n_j})\cap \gamma J=\emptyset$. In other words, if we integrate on
  $(\gamma F\setminus\gamma F_{16/n_j})\cap \gamma J\cap\gamma K$, then we can suppose that $|J|, |K|\le 16/n_j$.
  This by inequalities
  \[
  |J|^{-1}\int_{J}|f^0|, \quad |K|^{-1}\int_{K}|f^0| \le 4\lambda
  \]
  (see Lemma \ref{cz}) and by Lemma \ref{gammaIgammaJ} implies that

\[
\begin{split}
     &\sum_{j=1}^{N}\sum_{J,K\in \F{}}\int_{(\gamma F\setminus\gamma F_{16/n_j})\cap \gamma J\cap\gamma K}l_j^2
\left|\int_{J\cap 3I_{|n_j|}(y)}f^0(x) dx\right|
\left|\int_{K\cap 3I_{|n_j|}(y)}f^0(z) dz\right| dy\\
&\le \sum_{j=1}^{N}\sum_{J,K\in \F{}}\int_{(\gamma F\setminus\gamma F_{16/n_j})\cap \gamma J\cap\gamma K} 2^{14}\lambda^2 dy\\
&\le 2^{14}\lambda^2\sum_{j=1}^{N} \sum_{J,K\in \F{}}|(\gamma F\setminus\gamma F_{16/n_j})\cap\gamma J\cap\gamma K|
\le 2^{14}\lambda^2\sum_{j=1}^{N} \sum_{J,K\in \F{}}|\gamma J\cap \gamma K|\\
& \le C_{\gamma}\lambda^2N |F|
\le C_{\gamma}N\|f\|_1\lambda.
\end{split}
\]
Now, we have to check the integrals $\int_{\Delta_j} dy$.  In this case, $y\in \Delta_j = \gamma F_{16/n_j}\setminus\gamma F_{\beta_j}$.
Thus, by the next lemma, that is by Lemma \ref{deltaint} below the proof of Lemma \ref{gammaF-gammaFjS} is complete.
\end{proof}

In order to complete the proof of Lemma \ref{gammaF-gammaFjS} we need one additional step which is written in the next lemma. It also uses the notation of
Lemma \ref{gammaF-gammaFjS} and other parts of this paper but from the point of view of
readability, we give this additional step in a separate lemma. 

\begin{lem}\label{deltaint}
\[
\sum_{j=1}^{N}\
\sum_{J,K\in \F{}}\int_{\Delta_j\cap \gamma J\cap\gamma K}n_j^2
\left|\int_{J\cap 3I_{|n_j|}(y)}f^0(x) dx\right|
\left|\int_{K\cap 3I_{|n_j|}(y)}f^0(z) dz\right| dy
 \le C_{\gamma}N\lambda\log^4 (N+1)\|f\|_1.
 \]
 The constant $C_{\gamma}$ can depend only on $\gamma$ (and it is uniform in $f,  (n_j), N$ and  $\lambda$).
\end{lem}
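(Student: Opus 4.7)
The key observation is that the factor $n_j^2$ in the integrand must be handled by splitting it as $n_j \cdot n_j$ and treating the two copies completely differently. First I would apply the bound from Lemma \ref{cz}, namely $\left|\int_{J\cap 3I_{|n_j|}(y)} f^0(x)\,dx\right|\le 4\lambda|J|$, to just one of the two inner integrals, bounding the integrand by $4\lambda\,n_j|J|\cdot n_j\left|\int_{K\cap 3I_{|n_j|}(y)}f^0(z)\,dz\right|$. The crucial pointwise bound is that, for $y\in\Delta_j\cap\gamma J$, the condition $y\notin\gamma F_{\beta_j}$ forces $J\notin\F{\beta_j}$, i.e.\ $|J|\le\beta_j$; hence
\[
n_j|J|\le n_j\beta_j=20(j+1)\log^2(j+1)\le 40N\log^2(N+1).
\]
Moreover, summing $n_j|J|$ over all $J\in\F{}$ with $y\in\gamma J$ and $|J|\le\beta_j$ is still bounded by $C_\gamma n_j\beta_j\le C_\gamma N\log^2(N+1)$, by the disjointness of $\F{}$ and a geometric series over the dyadic levels. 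Pulling this pointwise bound on the $J$-sum out of the integral reduces the problem to controlling
\[
I:=\sum_{j=1}^N\sum_{K\in\F{}}\int_{\Delta_j\cap\gamma K}n_j\left|\int_{K\cap 3I_{|n_j|}(y)}f^0(z)\,dz\right|dy.
\]

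The second step is a Fubini swap to handle $I$. Using the symmetry $z\in 3I_{|n_j|}(y)\Leftrightarrow y\in 3I_{|n_j|}(z)$ together with $|3I_{|n_j|}(z)|\le 12\pi/n_j$ and the bound $\int_K|f^0|\le 4\lambda|K|$ from Lemma \ref{cz}, one obtains for each fixed $j,K$
\[
\int_{\Delta_j\cap\gamma K}n_j\int_{K\cap 3I_{|n_j|}(y)}|f^0(z)|\,dz\,dy\le n_j\int_K|f^0(z)|\cdot|3I_{|n_j|}(z)|\,dz\le 12\pi\int_K|f^0|\le C\lambda|K|,
\]
uniformly in $j$. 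Summing naively in $j$ would cost a factor of $N$, but the near-disjointness of the $\Delta_j$'s---already established during the proof of Lemma \ref{gammaF-gammaFjH} via the lacunarity $n_{j+1}\ge 2n_j$---shows that for each fixed $K\in\F{}$ at most $(\gamma+1)\log^2 N$ indices $j$ satisfy $\gamma K\cap\Delta_j\neq\emptyset$. Hence $\sum_{j=1}^N\int_{\Delta_j\cap\gamma K}\cdots\, dy\le C_\gamma\lambda|K|\log^2(N+1)$, and summing over $K$ with $\sum_{K\in\F{}}|K|=|F|\le\|f\|_1/\lambda$ yields $I\le C_\gamma\log^2(N+1)\|f\|_1$. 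Multiplied by the pulled-out factor $C_\gamma\lambda N\log^2(N+1)$, this produces the desired bound $C_\gamma N\log^4(N+1)\lambda\|f\|_1$.

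The main obstacle is precisely the factor $n_j^2$ in the integrand. If one bounded both inner integrals by $4\lambda|J|$ and $4\lambda|K|$ and then used $n_j|J|,\,n_j|K|\le 40N\log^2(N+1)$ simultaneously, the resulting estimate would carry a factor of order $N^2\log^4(N+1)$, which is too large by a factor of $N$. The whole point of the approach is that only one copy of $n_j$ may be converted to $N\log^2(N+1)$ via the constraint $|J|\le\beta_j$; the second copy must be absorbed by the Fubini swap, whose crucial gain is the measure estimate $|3I_{|n_j|}(z)|\le 12\pi/n_j$ reflecting the single dyadic level associated with $n_j$. Only then does the near-disjointness of the $\Delta_j$'s keep the cost of the $j$-summation to an extra $\log^2(N+1)$ factor instead of $N$.
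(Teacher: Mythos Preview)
Your argument is correct and considerably simpler than the paper's. Both proofs begin the same way: bound one inner integral by $4\lambda|J|$, observe that $y\in\Delta_j\cap\gamma J$ forces $|J|\le\beta_j$, and sum the contributions of $J$ to extract the factor $C_\gamma\lambda\, n_j\beta_j\le C_\gamma\lambda N\log^2(N+1)$. (The paper does this via maximal subintervals, you via the elementary geometric bound $\sum_{J:\,y\in\gamma J,\,|J|\le\beta_j}|J|\le C_\gamma\beta_j$; both are fine, though your justification ``disjointness of $\F{}$'' should really read ``at most one dyadic interval of each scale contains a given point''.) The two proofs diverge at the remaining $K$-integral. You dispatch it in one line by the Fubini swap $z\in 3I_{|n_j|}(y)\Leftrightarrow y\in 3I_{|n_j|}(z)$ together with $|3I_{|n_j|}(z)|\le C/n_j$, absorbing the second copy of $n_j$ and reducing to $\int_K|f^0|\le C\lambda|K|$; the previously established count ``at most $(\gamma+1)\log^2N$ indices $j$ with $\gamma K\cap\Delta_j\neq\emptyset$'' then finishes the job. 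The paper instead enlarges $\Delta_j$ to $\tilde\Delta_j$, splits it into $\Delta_j'=\{y:\,n_j\int_{3I_{|n_j|}(y)}|f|>50\lambda\}$ and its complement $\Delta_j''$, handles $\Delta_j''$ trivially, and on $\Delta_j'$ performs a dyadic case analysis (on the shifts $a,b$ and on whether $J=K$) that converts the averaging operator into integrals over $(\Delta_j'\pm e_j)\cap J$, followed by a separate near-disjointness argument for the shifted sets $\Delta_j'+\epsilon e_j$. Your Fubini step replaces this entire machinery; in effect you have transported the strategy used for $A_j^{1,2}$ in Lemma~\ref{gammaF-gammaFjH} to the present ``local averaging'' kernel, where it works even more cleanly because no logarithm of $n_j\beta_j$ appears.
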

\begin{proof}
With the same argument as in the very beginning of Lemma \ref{gammaF-gammaFjH}, we can suppose that $N\ge 32$ again. In this proof - if it does not cause misunderstanding- $I_{|n_j|}(y)$ is simply denoted by $I$.
Set ($\frac{2\pi}{2^{|n_j|}} < 16/n_j$)
\[
\tilde \Delta_j := \gamma F_{\frac{2\pi}{2^{|n_j|}}} \setminus \gamma F_{\beta_j} \supset  \gamma F_{\frac{16}{n_j}} \setminus \gamma F_{\beta_j} = \Delta_j,
\]
and  (recall the definition of  $\Delta_j$ at (\ref{Delta_j}))
\[
\Delta_j^{\prime} := \{y\in \tilde\Delta_j : n_j\int_{3I_{|n_j|}(y)}|f(z)| dz > 50\lambda\}, \quad
\Delta_j^{\prime\prime} := \tilde\Delta_j \setminus \Delta_j^{\prime}.
\]
Remark that the set $\Delta_j^{\prime}$  is the union of dyadic intervals of length $2\pi/2^{|n_j|}$.
Then by the relations $f^0 = f - f_0, \|f_0\|_{\infty}\le 2\lambda$ we have
\[
\begin{split}
    & \sum_{j=1}^{N}
\sum_{J,K\in \F{}}\int_{\Delta_j^{\prime\prime}\cap \gamma J\cap\gamma K}n_j^2
\left|\int_{J\cap 3I}f^0(x) dx\right|
\left|\int_{K\cap 3I}f^0(z) dz\right| dy \\
     & \le C\lambda^2 \sum_{j=1}^{N}\sum_{J,K\in \F{}}|\Delta_j^{\prime\prime}\cap \gamma J\cap\gamma K| \le   C\lambda^2 \log^2 N
     \sum_{J,K\in \F{}}|\gamma J\cap\gamma K|
      \le C_{\gamma}\lambda  \log^2 N \|f\|_1,
\end{split}
\]
where the last but one inequality comes from the fact that $\Delta_j^{\prime\prime}\subset \tilde\Delta_j$ and $\Delta_k^{\prime\prime}\subset \tilde\Delta_k$ are disjoint
for $|j-k|> \log^2 N$ and $N\ge j,k,32$ (similarly as in the case of $\Delta_j, \Delta_k$ for $|j-k|> \log^2 N$) and the last  inequality comes from Lemma \ref{gammaIgammaJ}.

Now, let $|a|, |b|<\gamma/2$ be integers and
investigate for some $J,K\in \F{}$:
\[
A_{a,b} := \int_{\Delta_j^{\prime}\cap J^{(a)}\cap K^{(b)}}n_j^2 \int_{J\cap 3I}|f^0(x)| dx \int_{K\cap 3I}|f^0(z)| dz dy.
\]
If $|J|> \beta_j$, then by definition $\tilde \Delta_j\cap \gamma J=\emptyset$ and then $\Delta_j^{\prime}\cap J^{(a)}\cap K^{(b)}=\emptyset$. That is,
$|J|, |K| \le  \beta_j$ can be supposed.

If $|J| \le 16/n_j$, then   $n_j\int_{J\cap 3I}|f^0(x)| dx \le n_j |J|4\lambda \le 64\lambda$,
$n_j\int_{K\cap 3I}|f^0(z)| dz \le 4\lambda n_j\beta_j \le 160\lambda N\log^2 (N+1)$.
Thus, we have
$A_{a,b} \le C\lambda^2 N\log^2 (N+1) |\Delta_j^{\prime}\cap J^{(a)}\cap K^{(b)}|$ and
\[
\sum_{j=1}^{N}
\sum_{K\in \F{}}\sum_{J\in \F{}, |J|\le 16/n_j}|\Delta_j^{\prime}\cap J^{(a)}\cap K^{(b)}| \le
\sum_{J, K\in \F{}}\sum_{j=1}^{N}|\Delta_j^{\prime}\cap J^{(a)}\cap K^{(b)}| \le C_{\gamma}\log^2 N |F|.
\]
(Recall that for $|j-k|>\log^2 N$ and $j,k\le N$ we have $\Delta^{\prime}_j\cap \Delta^{\prime}_{k}=\emptyset$.)
The same can be said if $|K| \le 16/n_j$. That is, from now on we can suppose that $\beta_j \ge |J|, |K| > 16/n_j$.

\medskip
Let $\mathcal{A}_l$ be the $\sigma$-algebra generated by the dyadic intervals with measure  $2\pi/2^l$ ($l\in\mathbb{N}$).
For a fixed $j$, $e_j:=2\pi/2^{|n_j|}$,  $\Delta_j^{\prime}-e_j = \{y-e_j : y\in \Delta_j^{\prime}\}$ (and similarly for $\Delta_j^{\prime}+e_j$).  We prove
for $|j-k|> \log^2 N$, $\epsilon,\delta =-1,0,1$ that the sets $\Delta_j^{\prime} + \epsilon e_j$ and $\Delta_k^{\prime}+\delta e_k$ are disjoint,

Suppose that $k>j+\log^2 N$. If $z\in \Delta_j^{\prime} + \epsilon e_j$, then the distance of $z$ and $\Delta_j^{\prime}$ is not more than $2\pi/2^{|n_j|}$. Consequently at least one of $I_{|n_j|}(z), I^-_{|n_j|}(z), I^+_{|n_j|}(z)$ is a subset of the $\Delta_j^{\prime}\in \mathcal{A}_{|n_j|}$ measurable set.
Say, $I^-_{|n_j|}(z)$ is this set. Then for $y = z-e_j$ we have $y\in \Delta_j^{\prime}$ and thus $n_j\int_{3I_{|n_j|}(y)}|f| > 50\lambda$ and consequently there is at least one
set among $I_{|n_j|}(y), I^-_{|n_j|}(y), I^+_{|n_j|}(y)$ say $I^-_{|n_j|}(y)$ for which $2^{|n_j|}/(2\pi)\int_{I^-_{|n_j|}(y)}|f| > \lambda$. This by Lemma \ref{cz} and by the definition
of $\F{}$ gives the existence of a dyadic interval $L\in \F{}$ such that $I^-_{|n_j|}(y) \subset L$. Then $|L| \ge 2\pi/2^{|n_j|} > \beta_{k}$ for $k>j+\log^2 N$. Then by definition we have $\gamma L\cap \tilde\Delta_k=\emptyset$. Then  $\gamma L\cap \Delta_k^{\prime}=\emptyset$. This implies $(\gamma-2) L\cap \Delta_k^{\prime}+\delta e_k=\emptyset$
for $\delta=-1,0,1$. But $y\in 3L$ and then $z\in 5L$ and $\gamma \ge 7$ gives that $z\notin \Delta_k^{\prime}+\delta e_k$.
That is, we proved that
\[
 \Delta_j^{\prime} + \epsilon e_j\cap \Delta_k^{\prime}+\delta e_k = \emptyset \quad (\epsilon,\delta =-1,0,1, |j-k|> \log^2 N).
\]
Go back to check $A_{a,b}$. Then $y\in \Delta_j^{\prime}\cap J^{(a)}\cap K^{(b)}$. If $|a|>1$, then $y\notin 3J$ and then $J\cap 3I_{|n_j|}(y)=\emptyset$ and thus
$A_{a,b}=0$. The same can be said if $|b|>1$. That is, $|a|, |b|\le 1$ can be supposed.

\textbf{Case $1$.} From now on we suppose that $J\not= K$ (which also means $J\cap K = \emptyset$). Recall that in this proof - if it does not cause misunderstanding, $I_{|n_j|}(y)$ is simply denoted by $I$.
One of $I\cup I^+$ and $I\cup I^-$ should be   $I_{|n_j|-1}(y)$. Say, this is $I\cup I^+$ and this should be a subset of $J$ or a subset of $K$ 
(otherwise at least
one of $J\cap 3I$  and $K\cap 3I$ is the empty set and the corresponding integral on $J\cap 3I$ or on $K\cap 3I$ is zero). Say, $I\cup I^+ = I_{|n_j|-1}(y)\subset K$. Then $I^-\subset J$ should hold. This means $y\in K = K^{(0)}, y\in J+e_j$ and $y\notin J$
($J\cap K = \emptyset$). Thus, $y\in J^+ = J^{(1)}$. This case can happen only if $a=1, b=0$. Consequently, we have to check the cases only for
$(a,b)=\{(1,0), (-1,0), (0,1), (0,-1)\}$. Check the case $(a,b) = (1,0)$. Then by the relations $n_j\int_{K\cap 3I}|f^0(z)| dz \le 4n_j|K| \lambda \le 4n_j\beta_j\lambda \le CN\log^2 (N+1) \lambda$ and by the disjointness of the elements of $\F{}$ (recall that now $16/n_j < |J|, |K| \le \beta_j$)
\[
\begin{split}
&
B:= \sum_{K\in \F{16/n_j}\setminus \F{\beta_j}}A_{1,0} \le \sum_{K\in \F{16/n_j}\setminus \F{\beta_j}}
\int_{\Delta_j^{\prime}\cap K}n_j^2 \int_{J\cap 3I}|f^0(x)| dx \int_{K\cap 3I}|f^0(z)| dz dy\\
& \le C\lambda N\log^2 (N+1)  \sum_{K\in \F{}}\int_{\Delta_j^{\prime}\cap K}n_j \int_{J\cap 3I}|f^0(x)| dx  dy\\
& \le C\lambda N\log^2 (N+1) \int_{\Delta_j^{\prime}}n_j \int_{J\cap 3I}|f^0(x)| dx  dy.
\end{split}
\]
Since $J, \Delta_j^{\prime}$ are  measurable with respect to $\mathcal{A}_{|n_j|}$ ($J\in \F{}, |J|>16/n_j > 2\pi/2^{|n_j|}$) consequently  we have for every $g\in L^1$:
\begin{equation}\label{intDeltaJ3I}
\begin{split}
&
\int_{\Delta_j^{\prime}}\frac{2^{|n_j|}}{2\pi}\int_{J\cap I_{|n_j|}(y)}|g(x)| dx  dy = \int_{\Delta_j^{\prime}\cap J} |g(y)| dy,\\
& \int_{\Delta_j^{\prime}}\frac{2^{|n_j|}}{2\pi}\int_{J\cap I^-_{|n_j|}(y)}|g(x)| dx  dy = \int_{(\Delta_j^{\prime}-e_j)\cap J} |g(y)| dy,\\
& \int_{\Delta_j^{\prime}}\frac{2^{|n_j|}}{2\pi}\int_{J\cap I^+_{|n_j|}(y)}|g(x)| dx  dy = \int_{(\Delta_j^{\prime}+e_j)\cap J} |g(y)| dy.
\end{split}
\end{equation}
Thus, taking into account the estimation for  $B$ above:
\[
\begin{split}
& \sum_{j=1}^{N}\sum_{J, K\in \F{16/n_j}\setminus \F{\beta_j}}A_{1,0} \\
& \le
C\lambda N\log^2 (N+1) \sum_{j=1}^{N}\sum_{J\in \F{}}\left(\int_{\Delta_j^{\prime}\cap J} |f^0(y)| dy + \int_{(\Delta_j^{\prime}-e_j)\cap J} |f^0(y)| dy + \int_{(\Delta_j^{\prime}+e_j)\cap J} |f^0(y)| dy\right)\\
& \le
C\lambda N\log^2 (N+1) \sum_{j=1}^{N}\left(\int_{\Delta_j^{\prime}} |f^0(y)| dy + \int_{(\Delta_j^{\prime}-e_j)} |f^0(y)| dy + \int_{(\Delta_j^{\prime}+e_j)} |f^0(y)| dy\right) \\
&  \le CN\log^4 (N+1) \|f\|_1\lambda,
\end{split}
\]
since as we proved above  $(\Delta_j^{\prime}+\epsilon e_j)\cap (\Delta_k^{\prime}+\delta e_k) =\emptyset$ for any $|j-k|>\log^2 N$.
That is, the investigation of the case $1$ is done.

\textbf{Case $2$.} From now on we suppose that $J= K$. Then in $A_{a,b}$ it can be supposed that $a=b$.

\[
A_{a,a} \le C\lambda N\log^2 (N+1) \int_{\Delta_j^{\prime}\cap J^{(a)}}n_j \int_{J\cap 3I}|f^0(x)| dx  dy.
\]
Recall that $2\pi/2^{|n_j|} = |I|= |I_{|n_j|}(y)| < 16/n_j < |J|  \le \beta_j$ can be (and it is already) supposed. If $|a|>1$, then $y\in J^{(a)}$ gives $J\cap 3I =\emptyset$ and then $A_{a,a}=0$.
That is, $a=-1,0,1$. In the very same way as above in the case of $A_{a,b}$ by (\ref{intDeltaJ3I}) we get again in the case $J=K$
\[
A_{a,a} \le C\lambda N\log^2 (N+1) \left(\int_{\Delta_j^{\prime}\cap J} |f^0(y)| dy + \int_{(\Delta_j^{\prime}-e_j)\cap J} |f^0(y)| dy +\int_{(\Delta_j^{\prime}+e_j)\cap J} |f^0(y)| dy\right).
\]
Thus, summing up with respect to $J\in\F{}$ and $j$ we get the same bound as in Case $1$. The proof of Lemma \ref{deltaint} is complete.
\end{proof}

 A straightforward consequence of Lemma \ref{gammaF-gammaFjS} is

\begin{cor}\label{gammaF-gammaFjS-V}
Let $\gamma >5$ be an odd integer,  $N\in\N$. Let $f\in L^1(T)$, $\lambda >\|f\|_1/(2\pi)$, $(n_j)$ be a lacunary (with parameter not less than $2$) sequence of natural numbers and $n_j\beta_j = 20(j+1)\log^2 (j+1)$ for $j=1,\dots, N$.
Then the inequality
\[
\sum_{j=1}^{N}\int_{\gamma F\setminus\gamma F_{\beta_j}}|S_{n_j}f(y)-V_{n_j}f(y)|^2 dy \le C_{\gamma} N\log^5 (N+1)\|f\|_1\lambda
\]
holds.  The constant $C_{\gamma}$ can depend only on $\gamma$ (and it is uniform in $f,  (n_j), N$ and  $\lambda$).
\end{cor}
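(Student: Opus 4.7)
The plan is to follow the same strategy used for Corollary \ref{gammaFjS-V}, exploiting Lemma \ref{gammaF-gammaFjS} as a black box. Since $V_{n_j}f = \frac{1}{n_j}\sum_{l=n_j}^{2n_j-1} S_l f$ is a finite average of partial sums with indices $l$ satisfying $n_j \le l \le 2n_j$, the hypothesis of Lemma \ref{gammaF-gammaFjS} is applicable both to $S_{n_j}f$ (take $l_j = n_j$) and to each summand appearing in $V_{n_j}f$.

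First, I would split
\[
|S_{n_j}f(y)-V_{n_j}f(y)|^2 \le 2|S_{n_j}f(y)|^2 + 2|V_{n_j}f(y)|^2,
\]
so that
\[
\sum_{j=1}^{N}\int_{\gamma F\setminus\gamma F_{\beta_j}}|S_{n_j}f-V_{n_j}f|^2\, dy \le 2\sum_{j=1}^{N}\int_{\gamma F\setminus\gamma F_{\beta_j}}|S_{n_j}f|^2\, dy + 2\sum_{j=1}^{N}\int_{\gamma F\setminus\gamma F_{\beta_j}}|V_{n_j}f|^2\, dy.
\]
The first sum is bounded directly by Lemma \ref{gammaF-gammaFjS} applied with $l_j := n_j$, giving $C_{\gamma} N\log^5(N+1)\|f\|_1\lambda$.

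For the second sum, I would apply the elementary convexity inequality $\bigl|\frac{1}{n}\sum_{i=1}^n x_i\bigr|^2 \le \frac{1}{n}\sum_{i=1}^n |x_i|^2$ to obtain
\[
|V_{n_j}f(y)|^2 \le \frac{1}{n_j}\sum_{l=n_j}^{2n_j-1}|S_l f(y)|^2.
\]
Interchanging summations and integration and observing that every $l$ in the inner sum satisfies $n_j \le l \le 2n_j$, Lemma \ref{gammaF-gammaFjS} applied termwise yields
\[
\int_{\gamma F\setminus\gamma F_{\beta_j}}|V_{n_j}f|^2\, dy \le \frac{1}{n_j}\sum_{l=n_j}^{2n_j-1} C_{\gamma} N\log^5(N+1)\|f\|_1\lambda = C_{\gamma} N\log^5(N+1)\|f\|_1\lambda.
\]
Summing over $j=1,\dots,N$ gives an extra factor of $N$, which combined with the bound above yields the desired estimate up to absorbing numerical constants into $C_\gamma$.

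There is no real obstacle here; the only subtlety is to check that the hypothesis $n_j \le l_j \le 2n_j$ required by Lemma \ref{gammaF-gammaFjS} is respected for every intermediate $l$ used in the de la Vall\'ee-Poussin average, which is automatic from the definition $V_{n_j}f = \frac{1}{n_j}\sum_{l=n_j}^{2n_j-1}S_l f$. This completes the proof.
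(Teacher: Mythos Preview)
Your argument loses a factor of $N$ and therefore does not prove the stated bound. Lemma \ref{gammaF-gammaFjS} controls the \emph{sum} $\sum_{j=1}^{N}\int_{\gamma F\setminus\gamma F_{\beta_j}}|S_{l_j}f|^2\,dy$ for a given choice of $(l_1,\dots,l_N)$; it does not give the individual bound $\int_{\gamma F\setminus\gamma F_{\beta_j}}|S_{l}f|^2\,dy\le C_\gamma \log^5(N+1)\|f\|_1\lambda$ for a single $j$. The best you can extract termwise is that each nonnegative summand is at most the full right-hand side $C_\gamma N\log^5(N+1)\|f\|_1\lambda$, which is exactly what you wrote. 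Summing this over $j=1,\dots,N$ then produces $C_\gamma N^2\log^5(N+1)\|f\|_1\lambda$, not the required $C_\gamma N\log^5(N+1)\|f\|_1\lambda$; the extra $N$ cannot be absorbed into $C_\gamma$, which must be uniform in $N$.

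The fix is to keep the sum over $j$ intact when invoking the lemma. After the convexity step $|V_{n_j}f|^2\le \frac{1}{n_j}\sum_{l_j=n_j}^{2n_j-1}|S_{l_j}f|^2$, rewrite
\[
\sum_{j=1}^{N}\frac{1}{n_j}\sum_{l_j=n_j}^{2n_j-1}\int_{\gamma F\setminus\gamma F_{\beta_j}}|S_{l_j}f|^2\,dy
=\frac{1}{n_1\cdots n_N}\sum_{l_1=n_1}^{2n_1-1}\cdots\sum_{l_N=n_N}^{2n_N-1}\ \sum_{j=1}^{N}\int_{\gamma F\setminus\gamma F_{\beta_j}}|S_{l_j}f|^2\,dy,
\]
using that for each fixed $j$ the sums over $l_k$, $k\ne j$, are trivial. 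For every fixed tuple $(l_1,\dots,l_N)$ the inner $j$-sum is bounded by $C_\gamma N\log^5(N+1)\|f\|_1\lambda$ by Lemma \ref{gammaF-gammaFjS}, and averaging over the $n_1\cdots n_N$ tuples preserves this bound. This is precisely the manoeuvre in the paper's proof, and together with the direct application of the lemma to $l_j=n_j$ for the $S_{n_j}f$ part it yields the corollary with the correct power of $N$.
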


\begin{proof}
Use the inequality $|V_{n_j}f|^2 = \left|\frac{1}{n_j}\sum_{l_j=n_j}^{2n_j-1}S_{l_j}f\right|^2 \le \frac{1}{n_j}\sum_{l_j=n_j}^{2n_j-1}|S_{l_j}f|^2$.
By Lemma \ref{gammaF-gammaFjS} we have
\[
\begin{split}
    &  \sum_{j=1}^{N}\int_{\gamma F\setminus\gamma F_{\beta_j}}|V_{n_j}f(y)|^2 dy\\
     &  \le \sum_{j=1}^{N}\frac{1}{n_j}\sum_{l_j=n_j}^{2n_j-1}\int_{\gamma F\setminus\gamma F_{\beta_j}}|S_{l_j}f(y)|^2 dy\\
     & = \sum_{l_N=n_N}^{2n_N-1}\dots \sum_{l_1=n_1}^{2n_1-1}\frac{1}{n_N\cdots n_1}\sum_{j=1}^{N}\int_{\gamma F\setminus\gamma F_{\beta_j}}|S_{l_j}f(y)|^2 dy\\
     & \le \frac{1}{n_N\cdots n_1}\sum_{l_N=n_N}^{2n_N-1}\dots \sum_{l_1=n_1}^{2n_1-1} C_{\gamma}N\log^5 (N+1) \|f\|_1\lambda \le  C_{\gamma}N\log^5 (N+1) \|f\|_1\lambda.
\end{split}
\]
 Besides, apply Lemma \ref{gammaF-gammaFjS} to $S_{n_j}f$, that is, to the numbers $l_j=n_j$ and the proof of Corollary \ref{gammaF-gammaFjS-V} is complete.
\end{proof}

Summarize our achievements for the time being:
\begin{cor}\label{gammaFS-V}
Let $\beta>\gamma >5$ be  odd integers,  $N\in\N$. Let $f\in L^1(T), \lambda >\|f\|_1/(2\pi)$, $(n_j)$ be a lacunary (with parameter not less than $2$) sequence of natural numbers and $n_j\beta_j = 20(j+1)\log^2 (j+1)$, $n_j\le 50 m_j$ for $j=1,\dots, N$.
Then the inequality
\[
\sum_{j=1}^{N}\int_{\gamma F}|S_{n_j}f(y)-V_{n_j}f(y)|^2  \left|\sigma_{m_j}1_{\overline{\beta F_{\beta_j}}}(y)\right|^2 dy \le C_{\beta, \gamma} N\log^5 (N+1)\|f\|_1\lambda
\]
holds.  The constant $C_{\beta, \gamma}$ can depend only on $\gamma$ and $\beta$ (and it is uniform in $f,  (n_j), N$ and  $\lambda$).
\end{cor}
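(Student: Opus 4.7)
The plan is to split the integration domain $\gamma F$ into $\gamma F_{\beta_j}$ and its relative complement $\gamma F\setminus\gamma F_{\beta_j}$, and apply two already established results separately.

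On $\gamma F_{\beta_j}$, I would invoke Corollary \ref{gammaFjS-V} with $\epsilon=\beta_j$. The hypothesis $n_j\le 50 m_j$ in Corollary \ref{gammaFS-V} matches exactly the hypothesis of Corollary \ref{gammaFjS-V}, so that result gives
\[
\int_{\gamma F_{\beta_j}}\bigl|S_{n_j}f(y)-V_{n_j}f(y)\bigr|^2\bigl|\sigma_{m_j}1_{\overline{\beta F_{\beta_j}}}(y)\bigr|^2 dy \le C_{\beta,\gamma}\|f\|_1\lambda,
\]
and summing over $j=1,\dots,N$ produces the contribution $C_{\beta,\gamma} N\|f\|_1\lambda$, which is dominated by the right hand side $C_{\beta,\gamma}N\log^5(N+1)\|f\|_1\lambda$.

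On $\gamma F\setminus\gamma F_{\beta_j}$, I would simply use the trivial bound $\bigl|\sigma_{m_j}1_{\overline{\beta F_{\beta_j}}}\bigr|\le 1$ (since $\sigma_m$ of a nonnegative function bounded by $1$ is again bounded by $1$, using the positivity and integral normalization of the Fej\'er kernel). This reduces the estimate to
\[
\sum_{j=1}^{N}\int_{\gamma F\setminus\gamma F_{\beta_j}}\bigl|S_{n_j}f(y)-V_{n_j}f(y)\bigr|^2 dy,
\]
which is precisely the quantity controlled by Corollary \ref{gammaF-gammaFjS-V}, yielding the bound $C_{\gamma}N\log^5(N+1)\|f\|_1\lambda$ (the hypothesis on the lacunary parameter and on the definition $n_j\beta_j=20(j+1)\log^2(j+1)$ matches that corollary exactly).

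Adding the two pieces gives the claimed estimate with a constant depending only on $\beta$ and $\gamma$. There is no real obstacle here: all the difficult analytic work (the Calder\'on--Zygmund decomposition, the kernel estimates on neighbourhoods of the exceptional dyadic intervals, and the crucial gain from the lacunarity through the ``nearly disjoint'' sets $\Delta_j$) has already been absorbed into Corollary \ref{gammaFjS-V} and Corollary \ref{gammaF-gammaFjS-V}. The present corollary is essentially a bookkeeping step that packages these two regimes into a single inequality to be used downstream in the orthogonality lemma.
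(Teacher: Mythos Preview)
Your proposal is correct and follows essentially the same approach as the paper: split $\gamma F$ into $\gamma F_{\beta_j}$ and $\gamma F\setminus\gamma F_{\beta_j}$, apply Corollary~\ref{gammaFjS-V} on the first piece and, after using $\|\sigma_{m_j}1_{\overline{\beta F_{\beta_j}}}\|_\infty\le 1$, apply Corollary~\ref{gammaF-gammaFjS-V} on the second. The paper's own proof is stated in exactly these terms.
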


\begin{proof}
  For the sum of integrals $\int_{\gamma F_{\beta_j}} dy$ by Corollary \ref{gammaFjS-V} we have the estimation $C_{\beta,\gamma}N\|f\|_1\lambda$ and for
  the sum of integrals $\int_{\gamma F\setminus\gamma F_{\beta_j}} dy$ by the inequality $\|\sigma_{m_j}1_{\overline{\beta F_{\beta_j}}}\|_{\infty}\le 1$ and by Corollary \ref{gammaF-gammaFjS-V}  
  we have the estimation $C_{\gamma}N\log^5 (N+1)\|f\|_1\lambda$. This completes the proof of Corollary \ref{gammaFS-V}.
\end{proof}

The final ``integral section'' is
\section{The integral on $T\setminus\gamma F$}

Recall the definition of the Hilbert transform $H_n$ in (\ref{hilbert_def}).
The first lemma in this section  is:
\begin{lem}\label{T-gammaFH}
Let $\gamma >5$ be an odd integer, $n\in\N$  and $f\in L^1(T), \lambda >\|f\|_1/(2\pi)$. Then the inequality
\[
\int_{T\setminus\gamma F_{}}|H_n f(y)|^2  dy \le C\|f\|_1\lambda
\]
holds.  The constant $C$ is uniform in $f, n$  and  $\lambda$.
\end{lem}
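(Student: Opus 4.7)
The plan is to mirror the structure of Lemma \ref{gammaFjH2}, but now exploit the stronger condition $y\in T\setminus\gamma F$, which kills the analogues of $A^1, A^2, A^3$ and leaves only an $A^4$-type term. First I would apply the Calderón--Zygmund decomposition (Lemma \ref{cz}) to write $f=f_0+f^0$ with $f^0=\sum_{i\ge 1}f_i$. Since $\|f_0\|_\infty\le 2\lambda$ and $\|f_0\|_1\le 2\|f\|_1$, we have $\|f_0\|_2^2\le 4\lambda\|f\|_1$; combined with the $(L^2,L^2)$-boundedness of $H_n$ (established just before Lemma \ref{gammaFjH2}) this gives $\int_{T\setminus\gamma F}|H_nf_0|^2\le C\lambda\|f\|_1$.

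The main point is the estimate for $f^0$. Expanding
\[
H_n f^0(y)=\sum_{J\in\F{}}\int_{J\setminus 3I_{|n|}(y)}f^0(x)\cot\!\left(\tfrac{y-x}{2}\right)dx,
\]
and fixing $y\in T\setminus\gamma F$, I would repeat verbatim the dyadic dichotomy argument from the $A^4$ case of Lemma \ref{gammaFjH2}: because $y\notin\gamma J$, none of $I_{|n|}(y),I^\pm_{|n|}(y)$ can be a subset of $J$, so $J\setminus 3I_{|n|}(y)$ is either $\emptyset$ or all of $J$. In the nontrivial case we use $\int_J f^0=0$ (Lemma \ref{cz}) to subtract the value of $\cot$ at the center $x_J$ of $J$, obtaining
\[
\left|\int_{J\setminus 3I_{|n|}(y)} f^0(x)\cot\!\left(\tfrac{y-x}{2}\right)dx\right|\le \frac{C|J|}{\sin^2\!\left(\tfrac{y-x_J}{2}\right)}\int_J|f^0|\le \frac{C\lambda|J|^2}{\sin^2\!\left(\tfrac{y-x_J}{2}\right)},
\]
exactly as in (\ref{intK-3I}).

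Squaring the resulting bound and integrating gives
\[
\int_{T\setminus\gamma F}|H_nf^0(y)|^2\,dy\le C\lambda^2\sum_{J,K\in\F{}}|J|^2|K|^2\int_{T\setminus\gamma F}\frac{dy}{\sin^2\!\left(\tfrac{y-x_J}{2}\right)\sin^2\!\left(\tfrac{y-x_K}{2}\right)},
\]
which is exactly the object bounded in the $A^{4,1}$/$A^{4,2}$ analysis (see (\ref{A4estimation})--(\ref{A4estimation2})). For $J\neq K$ apply the splitting identity (\ref{1/(y-x0)(y-z0)split_to_sums}), integrate over $T\setminus\gamma J$ and $T\setminus\gamma K$ respectively (each integral is $\le C/|J|$ or $C/|K|$ as $\gamma>5$), and then use the sums $\sum_{J\neq K}|J|/\sin^2((x_J-x_K)/2)\le C/|K|$ established inside the proof of Lemma \ref{gammaFjH2}; for $J=K$ one integrates $|J|^4/(y-x_J)^4$ outside $\gamma J$ to get a contribution of order $|J|$. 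In both cases the final sum is controlled by $|F|\le\|f\|_1/\lambda$, which yields $C\lambda\|f\|_1$.

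The main (and essentially only) obstacle is the geometric lemma that makes the dyadic dichotomy valid for every $J\in\F{}$; once this is in place, the estimates are the ones already carried out in Section 3, simply applied on the larger domain $T\setminus\gamma F\supset T\setminus(\gamma J\cup\gamma K)$. No new ingredients beyond Lemmas \ref{cz} and \ref{gammaIgammaJ} and the $L^2$-boundedness of $H_n$ are needed.
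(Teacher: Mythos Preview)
Your proposal is correct and follows essentially the same route as the paper: reduce to $f^0$ via the $(L^2,L^2)$ bound for $H_n$ on $f_0$, use $y\notin\gamma J$ to force $J\setminus 3I_{|n|}(y)\in\{\emptyset,J\}$ and apply the cancellation estimate (\ref{intK-3I}), then observe that the resulting double sum is dominated by the right-hand side of (\ref{A4estimation}) because $T\setminus\gamma F\subset T\setminus(\gamma J\cup\gamma K)$, so (\ref{A4estimation2}) finishes it. The only difference is cosmetic: the paper simply cites (\ref{A4estimation2}) rather than re-running the $A^{4,1}/A^{4,2}$ computation as you do.
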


\begin{proof}
Without restriction of generality, in order to avoid writing too many conjugate signs, we suppose that $f$ is a real function.
Let $K\in \F{}, y\in T\setminus\gamma F_{}$, that is, $y\notin\gamma F_{}$ and check the set $K\setminus 3I_{|n_j|}(y)$. Is it possible that
 $K\setminus 3I_{|n_j|}(y)$ is not $K$ or the empty set? Only, when one of $I_{|n_j|}(y), I^+_{|n_j|}(y), I^-_{|n_j|}(y)$ is a subset of $K$ (two dyadic intervals are disjoint or one of them is contained in the other). If so, then it follows that $y\in I_{|n_j|}(y)\subset \gamma K$ which is a contradiction.
 Thus, either $K\setminus 3I_{|n_j|}(y)=\emptyset$ or $K\setminus 3I_{|n_j|}(y)=K$. In both cases
 in the same way as in the proof of Lemma \ref{gammaFjH2} (see (\ref{intK-3I})) (moreover in the first case there is nothing to prove):
\[
\begin{split}
   &
 \left|\int_{K\setminus 3I_{|n|}(y)} f^0(z)\cot\left(\frac{y-z}{2}\right) dz\right|
   \le   \frac{C|K|}{\sin^2\left(\frac{y-z_0}{2}\right)}\int_{K}|f^0(z)| dz
   \le   \frac{C|K|^2\lambda}{\sin^2\left(\frac{y-z_0}{2}\right)},
\end{split}
\]
where $z_0$ is the center of $K$. Similarly,
\[
 \left|\int_{J\setminus 3I_{|n|}(y)} f^0(x)\cot\left(\frac{y-x}{2}\right) dx\right| \le  \frac{C|J|^2\lambda}{\sin^2\left(\frac{y-x_0}{2}\right)},
\]
where $x_0$ is the center of $J$.
This gives
\[
\begin{split}
   &
\int_{T\setminus\gamma F_{}}|H_n f(y)|^2  dy
\le 2\int_{T\setminus\gamma F_{}}|H_n f_0(y)|^2  dy + 2\int_{T\setminus\gamma F_{}}|H_n f^0(y)|^2  dy \le C\|f\|_1\lambda
\\
& + C\sum_{J, K\in \F{}}\int_{T\setminus\gamma F_{}}
\left|\int_{J\setminus 3I_{|n|}(y)} f^0(x)\cot\left(\frac{y-x}{2}\right) dx\right|
  \left|\int_{K\setminus 3I_{|n|}(y)} f^0(z)\cot\left(\frac{y-z}{2}\right) dz\right| dy \\
& \le C\|f\|_1\lambda + C\lambda^2 \sum_{J, K\in \F{}}\int_{T\setminus\gamma F_{}}
\frac{|J|^2 |K|^2}{\sin^2\left(\frac{y-x_0}{2}\right) \sin^2\left(\frac{y-z_0}{2}\right)} dy  =:  C\|f\|_1\lambda + A.
\end{split}
\]
The term $A$ is smaller (or equival to) than the right-hand side in (\ref{A4estimation}) since the integrals are smaller because
$T\setminus \gamma F \subset T\setminus (\gamma J\cup \gamma K)$ for any $J,K\in \F{}$.
Consequently, estimation (\ref{A4estimation2}) can be applied and then $A  \le C\|f\|_1\lambda$.
This completes the proof of Lemma \ref{T-gammaFH}.
\end{proof}

The second lemma in this section is to be proved:
\begin{lem}\label{T-gammaFS}
Let $\gamma >5$ be an odd integer, $l\in\N$ and $f\in L^1(T), \lambda >\|f\|_1/(2\pi)$. Then the inequality
\[
\int_{T\setminus\gamma F}|S_l f(y)|^2  dy \le C\|f\|_1\lambda
\]
holds. The constant $C$   is uniform in $f, l$  and  $\lambda$.
\end{lem}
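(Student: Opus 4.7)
The plan is to mimic the proof of Lemma \ref{gammaFjS}, replacing the weight $|\sigma_m 1_{\overline{\beta F_\epsilon}}|^2$ by $1$ and the domain $\gamma F_\epsilon$ by $T\setminus \gamma F$, and feeding in Lemma \ref{T-gammaFH} instead of Lemma \ref{gammaFjH2}.

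First, I would define the ``modified partial sum''
\[
\tilde S_l f(y) := \frac{1}{\pi}\int_{T\setminus 3I_{|l|}(y)} f(x)D_l(y-x)\,dx
\]
and repeat exactly the identity (\ref{dirichlet_kernel})--(\ref{^2estimation_on_modified_partial}) to get
\[
|\tilde S_l f(y)|^2 \le \|f\|_1^2 + \left|H_l(f(\cdot)e^{-\imath(l+1)\cdot})(y)\right|^2 + \left|H_l(f(\cdot)e^{\imath l\cdot})(y)\right|^2.
\]
Since $\|f\|_1^2\le 2\pi\|f\|_1\lambda$, and since multiplication by $e^{\imath k\cdot}$ preserves $|f|$ and hence the set $\F{}$ produced by Lemma \ref{cz}, Lemma \ref{T-gammaFH} applied to the two exponentially modulated versions of $f$ gives
\[
\int_{T\setminus\gamma F}|\tilde S_l f(y)|^2\,dy \le C\|f\|_1\lambda.
\]

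Next I have to control the difference $S_lf-\tilde S_l f$. Exactly as in (\ref{SltildeSldifference}),
\[
|S_l f(y)-\tilde S_l f(y)|\le E_l|f|(y), \qquad E_l g(y) := l\int_{3I_{|l|}(y)} g(x)\,dx,
\]
and it suffices to prove $\int_{T\setminus\gamma F}|E_l|f|(y)|^2\,dy\le C\|f\|_1\lambda$. Apply Lemma \ref{cz} to the function $|f|$; since $\F{}$ depends only on $|f|$ and $\lambda$, the resulting $\F{}$ is the same one used throughout. Write $|f|=h_0+h^0$ with $h^0=\sum_{i\ge 1}h_i$, each $h_i$ supported on $I^i\in\F{}$ with $\int_{I^i}h_i=0$ and $\|h_0\|_\infty\le 2\lambda$. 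For $h_0$ the estimate is immediate from $|E_lh_0|^2\le C\lambda\,E_lh_0$ and $\|E_l h_0\|_1\le C\|h_0\|_1\le C\|f\|_1$.

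For $h^0$, expand the square and obtain a double sum over pairs $(J,K)\in\F{}\times\F{}$ of integrals of $|\int_{J\cap 3I_{|l|}(y)} h^0| \cdot |\int_{K\cap 3I_{|l|}(y)} h^0|$ over $T\setminus\gamma F$. Here comes the key observation, which is the analogue of the vanishing of $A_j^2,A_j^3,A_j^4$ in Lemma \ref{gammaFjS}: for \emph{every} $y\in T\setminus\gamma F$ and \emph{every} $J\in\F{}$ one has $y\notin\gamma J$, so by the dyadic nesting argument used there (none of $I_{|l|}(y),I^{\pm}_{|l|}(y)$ can sit inside $J$, hence $J\cap 3I_{|l|}(y)$ is either $\emptyset$ or $J$, and in the latter case $\int_J h^0=0$ since the $h_i$ have mean zero), the inner integral vanishes. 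Consequently \emph{every} term in the double sum is $0$, and the $h^0$ contribution is simply $0$. I do not expect any real obstacle here; the only point to watch is verifying that this vanishing step truly goes through uniformly for all $J\in\F{}$ on $T\setminus\gamma F$, which is precisely the geometric feature that makes the complement of $\gamma F$ the natural domain. Combining the $\tilde S_lf$ bound with the $E_l|f|$ bound, and using $(a+b)^2\le 2a^2+2b^2$, yields $\int_{T\setminus\gamma F}|S_lf|^2\,dy\le C\|f\|_1\lambda$, completing the proof.
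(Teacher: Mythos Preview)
Your proposal is correct and follows essentially the same route as the paper's own proof: reduce $S_l$ to $\tilde S_l$ via the Dirichlet-kernel identity, apply Lemma~\ref{T-gammaFH} to the two modulated copies of $f$, and handle the local piece via $E_l|f|$ and the Calder\'on--Zygmund decomposition. The only cosmetic difference is that the paper observes directly that $E_l f^0(y)=0$ for every $y\in T\setminus\gamma F$ (since each $\int_{J\cap 3I_{|l|}(y)}f^0$ vanishes), whereas you first expand $|E_l h^0|^2$ as a double sum and then kill every term; these are the same argument.
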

\begin{proof}
Recall the formula for the Dirichlet kernel (\ref{dirichlet_kernel})
and the definition of the modified partial sums (\ref{modified_partial}) (see Lemma \ref{gammaFjS}).
Also recall the estimation (\ref{^2estimation_on_modified_partial}). That is,
\[
|\tilde S_l f(y) |^2 \le \|f\|_1^2 + \left|H_l(f(\cdot)e^{-\imath(l+1)\cdot})(y)\right|^2 + \left|H_l(f(\cdot)e^{\imath l\cdot})(y)\right|^2.
\]
That is, by Lemma \ref{T-gammaFH} we have
\[
\int_{T\setminus\gamma F}|\tilde S_l f(y)|^2  dy \le C\|f\|_1\lambda.
\]
Recall   the definition of the operator $E_l$ in Lemma \ref{gammaFjS} at (\ref{Eldef}):  
\[
E_lf(y) := l\int_{3I_{|l|}(y)}f(x) dx.
\]
Now we have to check the difference of $S_lf$ and $\tilde S_lf$. It is bounded by (see (\ref{SltildeSldifference}))
\[
\frac{1}{\pi}\left|\int_{3I_{|l|}(y)}f(x)D_l(y-x) dx\right|  \le E_l|f|(y).
\]
That is, we finally have to prove that
\[
\int_{T\setminus\gamma F}|E_l f(y)|^2  dy \le C\|f\|_1\lambda
\]
and apply this inequality for the function $|f|$ (set $\F{}$ for $f$ and $|f|$ is the same). That would complete the proof of Lemma \ref{T-gammaFS}.
Apply Lemma \ref{cz} for the function $f$. We have $f=\sum_{i=0}^{\infty}f_i=f_0+f^0$.
We prove for any $y\in T\setminus\gamma F$ that $E_l f^0(y) =0$.
Let $J\in \F{}$. Then $y\in T\setminus\gamma F$ gives that either $3I_{|l|}(y)\cap J = J$ or $3I_{|l|}(y)\cap J =\emptyset$ because
$I_{|l|}(y), I^+_{|l|}(y), I^-_{|l|}(y)$ can not be a subset of $J$. In both cases
$\int_{3I_{|l|}(y)\cap J}f^0(x) dx = 0$. This holds for each $J\in \F{}$ and consequently,
$l\int_{3I_{|l|}(y)\cap F}f^0(x) dx = E_l f^0(y) = 0$. Finally, by the fact that the operator $E_l$ is of type $(L^2, L^2)$ we have
\[
\int_{T\setminus\gamma F}|E_l f_0(y)|^2  dy \le C\|f\|_1\lambda.
\]
This completes the proof of Lemma \ref{T-gammaFS}.
\end{proof}

 A straightforward consequence of Lemma \ref{T-gammaFS} is

\begin{cor}\label{T-gammaFS-V}
Let $\gamma >5$ be an odd integer,  $n\in\N$. Let $f\in L^1(T), \lambda >\|f\|_1/(2\pi)$.
Then the inequality
\[
\int_{T\setminus\gamma F}|S_{n}f(y)-V_{n}f(y)|^2 dy \le C \|f\|_1\lambda
\]
holds. The constant $C$   is uniform in $f, n$  and  $\lambda$.
\end{cor}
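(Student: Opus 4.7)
The approach mirrors the proof of Corollary \ref{gammaF-gammaFjS-V}: reduce the estimate on $S_{n}f - V_{n}f$ to separate estimates on $S_{n}f$ and on $V_{n}f$, and handle the de la Vall\'ee-Poussin mean by averaging the already established estimate from Lemma \ref{T-gammaFS}. This is purely a combination of trivial inequalities with that lemma.

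First, I would write $|S_{n}f(y) - V_{n}f(y)|^{2} \le 2|S_{n}f(y)|^{2} + 2|V_{n}f(y)|^{2}$ and integrate over $T\setminus\gamma F$. The term with $S_{n}f$ is handled immediately by Lemma \ref{T-gammaFS} with $l=n$, giving a bound $C\|f\|_{1}\lambda$ uniform in $n$.

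Next, for the term with $V_{n}f$, I would use the identity $V_{n}f = \frac{1}{n}\sum_{l=n}^{2n-1} S_{l}f$ together with the elementary convexity inequality $\bigl|\frac{1}{n}\sum_{l=n}^{2n-1} S_{l}f(y)\bigr|^{2} \le \frac{1}{n}\sum_{l=n}^{2n-1}|S_{l}f(y)|^{2}$. After integrating over $T\setminus\gamma F$ and swapping the order of sum and integral, Lemma \ref{T-gammaFS} applies to each $S_{l}f$ and yields the bound $C\|f\|_{1}\lambda$ for each $l$, with a constant that is uniform in $l$. Averaging over $l=n,\dots,2n-1$ preserves the bound, so $\int_{T\setminus\gamma F}|V_{n}f(y)|^{2}\,dy \le C\|f\|_{1}\lambda$ as well.

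Combining the two pieces yields the desired estimate. There is really no obstacle here: the only point to notice is that the constant in Lemma \ref{T-gammaFS} is uniform in the index $l$, so averaging over the block $[n,2n-1]$ causes no deterioration. This parallels precisely the argument used to deduce Corollary \ref{gammaF-gammaFjS-V} from Lemma \ref{gammaF-gammaFjS}.
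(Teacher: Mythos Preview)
Your proposal is correct and matches the paper's own proof, which simply says that it is a direct application of Lemma \ref{T-gammaFS} following the steps of Corollary \ref{gammaFjS-V}. The only cosmetic difference is that the paper cites Corollary \ref{gammaFjS-V} rather than Corollary \ref{gammaF-gammaFjS-V} as the template, but the argument is identical to what you wrote.
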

\begin{proof}
  The proof is a direct application of Lemma \ref{T-gammaFS} and follows the steps of the proof of Corollary \ref{gammaFjS-V}.
\end{proof}

Corollaries \ref{gammaFS-V} and \ref{T-gammaFS-V} give
\begin{cor}\label{S-V}
Let $\beta >  7$ be  an odd integer,  $N\in\N$. Let $f\in L^1(T), \lambda >\|f\|_1/(2\pi)$, $(n_j)$ be a lacunary (with parameter not less than $2$) sequence of natural numbers and
$n_j\beta_j = 20(j+1)\log^2 (j+1)$, 
$m_j = \lfloor n_j/10 \rfloor$
for $j=1,\dots, N$.
Then the inequality
\[
\sum_{j=1}^{N}\int_{T}|S_{n_j}f(y)-V_{n_j}f(y)|^2  \left|\sigma_{m_j}1_{\overline{\beta F_{\beta_j}}}(y)\right|^2 dy \le C_{\beta} N\log^5 (N+1)\|f\|_1\lambda
\]
holds. The constant $C_{\beta}$ depends only on $\beta$ and it is uniform in $f,\lambda, N$ and $(n_j)$.
\end{cor}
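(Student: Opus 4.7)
The corollary is essentially a packaging of Corollaries \ref{gammaFS-V} and \ref{T-gammaFS-V}, so the approach is to split the ambient domain and feed each piece to the appropriate earlier result. Concretely, I would fix an auxiliary odd integer $\gamma$ with $5 < \gamma < \beta$; since $\beta$ is odd with $\beta > 7$ we have $\beta \ge 9$, so $\gamma = 7$ is a valid choice. Then I write
\[
\int_T = \int_{\gamma F} + \int_{T\setminus \gamma F}
\]
in each summand of the left-hand side and estimate the two resulting sums separately.

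For the piece over $\gamma F$, the target bound is exactly the conclusion of Corollary \ref{gammaFS-V}, so the only thing to verify is its hypothesis $n_j \le 50 m_j$. Writing $n_j = 10 k + r$ with $0 \le r < 10$, we have $m_j = \lfloor n_j/10\rfloor = k$, whence $50 m_j - n_j = 40 k - r \ge 40 - 9 > 0$ as soon as $k \ge 1$, i.e. $n_j \ge 10$. Since $(n_j)$ is lacunary with ratio at least $2$, at most a fixed finite number of indices $j$ have $n_j < 10$; for those one can absorb the contribution into a term of the form $C \|f\|_1 \lambda$ using, e.g., the $L^2$ bound on $S_{n_j} f - V_{n_j} f$ applied to the Calder\'on--Zygmund decomposition (or directly by noting that a finite sum of finite quantities is bounded). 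Thus this piece contributes at most $C_{\beta} N \log^5(N+1)\|f\|_1\lambda$.

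For the piece over $T \setminus \gamma F$, I would use the trivial pointwise bound $\|\sigma_{m_j} 1_{\overline{\beta F_{\beta_j}}}\|_\infty \le 1$ (a Fej\'er mean of an indicator function lies in $[0,1]$) to drop that factor, reducing the problem to
\[
\sum_{j=1}^{N}\int_{T\setminus \gamma F}|S_{n_j} f - V_{n_j} f|^2 \, dy.
\]
Corollary \ref{T-gammaFS-V} gives $C\|f\|_1\lambda$ for each $j$ separately, and summing over $j=1,\dots,N$ yields $C N\|f\|_1\lambda$, which is dominated by the bound from the previous paragraph.

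Adding the two contributions gives the claimed estimate $C_{\beta} N \log^5(N+1)\|f\|_1\lambda$. I do not expect any genuine obstacle here: the entire technical difficulty has been front-loaded into sections $3$--$5$, and Corollaries \ref{gammaFS-V} and \ref{T-gammaFS-V} were tailored precisely to clip together along $\partial(\gamma F)$. The only real bookkeeping is the choice of $\gamma$ and the trivial verification of $n_j \le 50 m_j$.
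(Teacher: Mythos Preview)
Your approach is correct and matches the paper's own (one-sentence) proof: the paper simply states that Corollaries \ref{gammaFS-V} and \ref{T-gammaFS-V} together give Corollary \ref{S-V}, which is exactly your splitting $\int_T = \int_{\gamma F} + \int_{T\setminus\gamma F}$ with the trivial bound $\|\sigma_{m_j}1_{\overline{\beta F_{\beta_j}}}\|_\infty\le 1$ on the second piece. Your verification of the hypothesis $n_j\le 50m_j$ and the handling of the finitely many small $n_j$ is more careful than the paper, which glosses over this point.
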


\section{Orthogonality, replacement and the proof of the main theorem}

The ``orthogonality'' lemma:
\begin{lem}\label{orthogonality}
Let $\beta >  7$ be  an odd integer,  $N\in\N$ and $f\in L^1(T), \lambda >\|f\|_1/(2\pi)$. Let  $(n_j)$ be a lacunary sequence of natural numbers with
$n_{j+1} \ge qn_j$, where $q>2.5$ and $\beta_j$ be the number defined as $n_j\beta_j = 20(j+1)\log^2 (j+1)$ and let $m_j = \lfloor n_j/10\rfloor$ for $j=1,\dots, N$.
Then the relation
\[
\begin{split}
&
\left\|\sum_{j=1}^{N}\left( S_{n_j}f-V_{n_j}f\right)  \left(\sigma_{m_j}1_{\overline{\beta F_{\beta_j}}}\right)\right\|_2^2
\\
& = \sum_{j=1}^{N}\left\||S_{n_j}f-V_{n_j}f|  \left|\sigma_{m_j}1_{\overline{\beta F_{\beta_j}}}\right|\right\|_2^2
 \le C_{\beta} N\log^5 (N+1)\|f\|_1\lambda
\end{split}
\]
holds. The constant $C_{\beta}$ depends only on $\beta$ and it is uniform in $f,\lambda, N$ and $(n_j)$.
\end{lem}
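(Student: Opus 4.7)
The plan is to recognize that the lemma's two assertions decouple: the equality between the squared norm of the sum and the sum of squared norms is a pure orthogonality statement, while the numerical bound that follows is already available from Corollary~\ref{S-V}. Accordingly, I would write $\phi_j := (S_{n_j}f - V_{n_j}f)\,\bigl(\sigma_{m_j} 1_{\overline{\beta F_{\beta_j}}}\bigr)$ and focus almost the entire argument on showing that the $\phi_j$'s are pairwise $L^2$-orthogonal, so that Parseval gives
\[
\Bigl\|\sum_{j=1}^{N}\phi_j\Bigr\|_2^2 = \sum_{j=1}^{N}\|\phi_j\|_2^2.
\]

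First I would carry out the spectral analysis of each factor. A direct Fourier computation shows $\widehat{S_{n_j}f - V_{n_j}f}(k)=0$ unless $n_j < |k|\le 2n_j-1$: indeed, for $|k|\le n_j$ the coefficients of $S_{n_j}f$ and $V_{n_j}f$ both equal $\hat f(k)$, while for $|k|\ge 2n_j$ they both vanish. On the other hand, $\sigma_{m_j}g$ has spectrum in $[-m_j,m_j]$ for any $g$. Taking the pointwise product translates to a convolution of spectra, so $\phi_j$ has Fourier support inside the dyadic-like annulus
\[
\Omega_j := \bigl\{\,k\in\mathbb{Z} : n_j - m_j < |k| \le 2n_j - 1 + m_j\,\bigr\}.
\]
With $m_j=\lfloor n_j/10\rfloor$ this is contained in $\{k : 9n_j/10 < |k| < 22n_j/10\}$.

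Next I would verify that $\Omega_j\cap\Omega_k=\emptyset$ whenever $j<k$. This is a short lacunarity check: the largest frequency in $\Omega_j$ is strictly less than $22n_j/10$, whereas the smallest in $\Omega_k$ is at least $9n_k/10\ge 9qn_j/10$. Since $q>2.5$, we have $9q/10>2.25>22/10$, so the two spectra are indeed disjoint. Consequently $\langle\phi_j,\phi_k\rangle=0$ in $L^2(T)$ for $j\ne k$, and Parseval yields the desired equality. The numerical bound
\[
\sum_{j=1}^{N}\|\phi_j\|_2^2 \le C_\beta N\log^5(N+1)\|f\|_1\lambda
\]
then follows directly by applying Corollary~\ref{S-V}, whose hypotheses ($q\ge 2$ lacunarity, $n_j\le 50m_j$ since $m_j=\lfloor n_j/10\rfloor$, and the fixed choice of $\beta_j$) are exactly those assumed here.

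The only step that needs any care is the spectral disjointness, and the margin $9q/10 > 22/10$ provided by $q>2.5$ is what dictates the precise hypothesis of the lemma; there is no genuine obstacle, since all the analytic estimates have been absorbed into Corollary~\ref{S-V}. In particular, no Calderon--Zygmund or kernel estimate needs to be revisited at this stage.
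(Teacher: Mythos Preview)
Your proposal is correct and follows essentially the same route as the paper: identify the Fourier support of each $\phi_j$ as lying in an annulus of the form $[0.9n_j,2.1n_j]\cup[-2.1n_j,-0.9n_j]$ (your $[9n_j/10,22n_j/10]$ is an equivalent bound), use the lacunarity $q>2.5$ to make consecutive annuli disjoint, and then invoke Corollary~\ref{S-V} for the numerical estimate. The paper's proof is essentially a two-line version of exactly this argument.
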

\begin{proof}
  The proof is quite simple and based on the fact that the trigonometric polynomials $(S_{n_j}f-V_{n_j}f)\left(\sigma_{m_j}1_{\overline{\beta F_{\beta_j}}}\right)$
  for different $j$'s are orthogonal because for a $j$ the set of $k$'s for which the $k$th Fourier coefficient is different from zero is a subset
  of
  \[
  \begin{split}
     &
  \set{k\in \mathbb{Z} : k\in [n_j-m_j, 2n_j+m_j]\cup [-2n_j-m_j, -n_j+m_j]}\\
  & \subset \set{k\in \mathbb{Z} : k\in [0.9n_j, 2.1n_j]\cup [-2.1n_j, -0.9n_j]}.
  \end{split}
  \]
   And for example
  $[0.9n_j, 2.1n_j]\cap [0.9n_{j+1}, 2.1n_{j+1}]=\emptyset$ because $2.1n_j< 0.9n_{j+1}$ for every $j$. This proves the equality part of Lemma \ref{orthogonality}. The inequality part of Lemma \ref{orthogonality} is Corollary \ref{S-V}.
\end{proof}

Next, we state and prove the ``replacement'' lemma. That is, - roughly speaking- we show why it is correct to investigate $(S_{n_j}f-V_{n_j}f) \sigma_{m_j}1_{\overline{\beta F_{\beta_j}}}$ instead
of $ S_{n_j}f-V_{n_j}f $. Before this some more notation is needed.

For a number $N\in\mathbb{N}$ let $K = \lfloor\sqrt{N}\rfloor$. That is, $K^2\le N< (K+1)^2$. Besides, for every $0<\delta<1/2$ let $K_0 := \lfloor K^{2\delta}\rfloor$.
For every $1\le i\le N$ there is a unique pair of natural numbers $(j,b)$ such that $i = (j-1)K_0+b$, where
$1\le j  \le N/ K_0 +1$ and $0\le b <  K_0$. Besides, set
\begin{equation}\label{njbprime}
\begin{split}
&n_{j,b}^{\prime} := n_{(j-1)K_0+b} = n_i, \quad
m_{j,b}^{\prime} := m_{(j-1)K_0+b} = m_i = \lfloor n_i/10\rfloor, \\
& \beta_{j,b}^{\prime} := \frac{20(j+1)\log^2(j+1)}{n^{\prime}_{j,b}} = \beta_i^{\prime}.
\end{split}
\end{equation}

Moreover, set
\[
T_{N,\beta} f:= \frac{1}{N}\sum_{i=1}^{N}\left( S_{n_i}f-V_{n_i}f\right)  \left(\sigma_{m_i}1_{\overline{\beta F_{\beta_i^{\prime}}}}\right).
\]

The ``replacement'' lemma:

\begin{lem}\label{replacement}
Let $\beta > 7$ be an  odd integer, $0<\delta<1/2$, $f\in L^1(T), \lambda >\|f\|_1/(2\pi)$. Let  $(n_j)$ be a strictly monotone increasing sequence of natural numbers,
\[
T_{N}f := \frac{1}{N}\sum_{i=1}^{N}
\left( S_{n_i}f-V_{n_i}f\right).
\]
Then we have
\[
\m{y\in T : \sup_{N\in \mathbb{N}}|T_{N, \beta}f(y) - T_{N}f(y)|>\lambda/2} \le C_{\beta}\frac{1}{1-2\delta}\sqrt{\frac{\|f\|_1}{\lambda}}.
\]

That is, the maximal operator of $|T_{N, \beta}f - T_{N}f|$ is a kind of weak type $(L^1, L^1)$.
The constant $C_{\beta}$ depends only on $\beta$ and it is uniform in $f,\lambda$ and $(n_j)$.
\end{lem}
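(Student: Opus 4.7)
The plan is to construct an exceptional set of measure at most $C_\beta\sqrt{\|f\|_1/\lambda}/(1-2\delta)$, off which $\sup_N|T_{N,\beta}f-T_N f|\le \lambda/2$. Writing
\[
T_{N,\beta}f(y)-T_N f(y)=-\frac{1}{N}\sum_{i=1}^{N}\bigl(S_{n_i}f-V_{n_i}f\bigr)(y)\,\sigma_{m_i}1_{\beta F_{\beta_i^{\prime}}}(y),
\]
the first piece of the exceptional set is $\gamma F$ with $\gamma=\beta+2$; since $\|f\|_1/\lambda<2\pi$, its measure is bounded by $\gamma\|f\|_1/\lambda\le\gamma\sqrt{2\pi}\cdot\sqrt{\|f\|_1/\lambda}$.

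The key pointwise estimate is that for $y\in T\setminus\gamma F$ one has $\operatorname{dist}(y,\beta I)\ge(\gamma-\beta)|I|/2$ for every $I\in\F{}$, and in particular $\operatorname{dist}(y,\beta F_{\beta_i^{\prime}})\ge(\gamma-\beta)\beta_i^{\prime}/2$ because $|I|>\beta_i^{\prime}$ for $I\in\F{\beta_i^{\prime}}$. Combined with $K_{m_i}(u)\le\pi^2/(2(m_i+1)u^2)$ this yields, on $T\setminus\gamma F$,
\[
\sigma_{m_i}1_{\beta F_{\beta_i^{\prime}}}(y)\le\frac{C_\beta}{m_i\beta_i^{\prime}}=\frac{C_\beta}{(j+1)\log^2(j+1)},
\]
using $m_i=\lfloor n_i/10\rfloor$ and $n_i\beta_i^{\prime}=20(j+1)\log^2(j+1)$, where $j=j(i)=\lceil i/K_0\rceil$ is the block index. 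Grouping the indices by block, applying Cauchy--Schwarz within each block (of size $\le K_0$) and then over $j$ (using convergence of $\sum_j((j+1)^2\log^4(j+1))^{-1}$) gives on $T\setminus\gamma F$
\[
|T_{N,\beta}f(y)-T_N f(y)|\le\frac{C_\beta\sqrt{K_0}}{N}\Bigl(\sum_{i=1}^{N}|S_{n_i}f(y)-V_{n_i}f(y)|^2\Bigr)^{1/2}.
\]
Integrating on $T\setminus\gamma F$ and applying Corollary \ref{T-gammaFS-V} to each summand, for $N=K^2$ and $K_0\le K^{2\delta}$,
\[
\|T_{K^2,\beta}f-T_{K^2}f\|_{L^2(T\setminus\gamma F)}^2\le\frac{C_\beta K_0\|f\|_1\lambda}{K^2}\le\frac{C_\beta\|f\|_1\lambda}{K^{2(1-\delta)}}.
\]

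Because $\delta<1/2$, $\sum_{K\ge 1}K^{-2(1-\delta)}\le 1+1/(1-2\delta)$, hence $\|\sup_K|T_{K^2,\beta}f-T_{K^2}f|\|_{L^2(T\setminus\gamma F)}^2\le C_\beta\|f\|_1\lambda/(1-2\delta)$; Chebyshev together with $\|f\|_1/\lambda<2\pi$ then bounds the measure of the second exceptional piece $\{y\in T\setminus\gamma F:\sup_K|T_{K^2,\beta}f-T_{K^2}f|(y)>\lambda/8\}$ by $C_\beta\sqrt{\|f\|_1/\lambda}/(1-2\delta)$. To treat a general $N\in\mathbb{N}$, let $K=\lfloor\sqrt N\rfloor$ and $a_i=(S_{n_i}f-V_{n_i}f)\sigma_{m_i}1_{\beta F_{\beta_i^{\prime}}}$; the elementary identity $N(T_{N,\beta}f-T_N f)=K^2(T_{K^2,\beta}f-T_{K^2}f)-\sum_{i=K^2+1}^{N}a_i$ produces
\[
|T_{N,\beta}f(y)-T_N f(y)|\le|T_{K^2,\beta}f(y)-T_{K^2}f(y)|+\frac{1}{K^2}\sum_{i=K^2+1}^{(K+1)^2-1}|a_i(y)|,
\]
and the second sum (at most $2K$ terms, each with $j(i)\ge K^{2-2\delta}$) is controlled in $L^2(T\setminus\gamma F)$ by Cauchy--Schwarz, the pointwise Fej\'er bound, and Corollary \ref{T-gammaFS-V}: its $L^2$-norm squared is $O(\|f\|_1\lambda/K^{6-4\delta})$, and $\sum_K K^{-(6-4\delta)}$ converges uniformly in $\delta\in(0,1/2)$. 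Chebyshev then furnishes a third exceptional piece of measure $\le C_\beta\sqrt{\|f\|_1/\lambda}$. On the complement of the union of the three pieces, $|T_{N,\beta}f(y)-T_N f(y)|\le\lambda/8+\lambda/8<\lambda/2$ for every $N$.

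The principal technical obstacle is obtaining the pointwise decay $\sigma_{m_i}1_{\beta F_{\beta_i^{\prime}}}(y)\le C_\beta/((j+1)\log^2(j+1))$ on $T\setminus\gamma F$: the definition of $\beta_i^{\prime}$ with the logarithmic weight indexed by the \emph{block} $j$ (rather than by $i$) is exactly what makes the Cauchy--Schwarz summation across blocks converge, while the calibration $K_0=\lfloor K^{2\delta}\rfloor$ is tuned so that the $L^2$ bound at $N=K^2$ is $\sim\|f\|_1\lambda/K^{2(1-\delta)}$, from which the critical $\zeta$-type tail $\sum K^{-2(1-\delta)}$ produces the $1/(1-2\delta)$ factor that appears in the statement.
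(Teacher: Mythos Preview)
Your argument is correct, and it shares with the paper the two essential ingredients: the pointwise Fej\'er bound $\sigma_{m_i}1_{\beta F_{\beta_i'}}(y)\le C_\beta/((j+1)\log^2(j+1))$ on $T\setminus\gamma F$, and the $L^2$ estimate of Corollary~\ref{T-gammaFS-V} for $S_{n}f-V_{n}f$ off $\gamma F$. The organization, however, is genuinely different. The paper proceeds by a direct Markov/$L^1$ argument: it uses $1-\sigma_{m_i}1_{\overline{\beta F_{\beta_i'}}}=\sigma_{m_i}1_{\beta F_{\beta_i'}}$ together with the pointwise Fej\'er bound, extends the block sum over $j$ to infinity (the weights $1/((j+1)\log^2(j+1))$ being summable), so that for fixed $K=\lfloor\sqrt N\rfloor$ the bound no longer depends on $N$; then it replaces $\sup_K$ by $\sum_K$ inside the integral and applies the $L^1$ estimate $\int_{T\setminus\alpha F}|S_lf-V_lf|\le C\sqrt{\|f\|_1\lambda}$ (Cauchy--Schwarz on Corollary~\ref{T-gammaFS-V}) term by term, obtaining the factor $\sum_K K^{2\delta-2}\asymp 1/(1-2\delta)$ directly. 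No square-splitting for general $N$ is needed.

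Your route instead performs a double Cauchy--Schwarz (within blocks of size $K_0$, then across blocks) to get a pointwise $\ell^2$ inequality, integrates in $L^2(T\setminus\gamma F)$ at the perfect squares $N=K^2$, sums over $K$ to control $\sup_K$, and then handles general $N$ by the elementary identity $N(T_{N,\beta}f-T_Nf)=K^2(T_{K^2,\beta}f-T_{K^2}f)-\sum_{i>K^2}a_i$ together with a separate tail estimate. This is a bit more elaborate (three exceptional pieces rather than two, an extra splitting step), but it is entirely sound; note that the identity is legitimate because $K_0$, and hence each $\beta_i'$, is constant on the range $K^2\le N<(K+1)^2$. What the paper's approach buys is brevity: extending the $j$-sum to infinity absorbs the $\sup_N$ in one stroke. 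What your approach buys is a slightly sharper quantitative control at the square indices (an honest $L^2$ bound rather than just a weak-type bound), at the cost of the additional bookkeeping for non-square $N$.
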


\begin{proof}

Investigate the $i$th addend in $T_{N}f(y)-T_{N, \beta}f(y)$, that is,
$\left(S_{n_i}f-V_{n_i}f\right)\left(1-\sigma_{m_i}1_{\overline{\beta F_{\beta_i^{\prime}}}}\right)$. We give an estimation for
$1-\sigma_{m_i}1_{\overline{\beta F_{\beta_i^{\prime}}}}$. Since $\sigma_{m_i}1 = 1$ everywhere, one has
\[
\begin{split}
     &
1-\sigma_{m_i}1_{\overline{\beta F_{\beta^{\prime}_i}}}(y)  = \sigma_{m_i}1_{\beta F_{\beta^{\prime}_i}}(y)
= \frac{1}{\pi}\int_{-\pi}^{\pi}  1_{\beta F_{\beta^{\prime}_i}}(x) K_{m_i}(y-x) dx \\
     & = \frac{1}{\pi}\int_{\beta F_{\beta^{\prime}_i}} K_{m_i}(y-x) dx
     \le \frac{C}{m_i}\int_{\beta F_{\beta^{\prime}_i}}\frac{1}{|y-x \, (\Mod T)|^2}dx,  \\
     \end{split}
\]
where $y-x\, (\Mod T)\in T$ and  $y-x\, (\Mod T)= y-x + u2\pi$ for a $u \in\{-1,0,1\}$. That is, if $y-x$ is not in interval $T$, then it is shifted by $2\pi$.
This can be done, since $\sin^2 ((y-x)/2) = \sin^2 ((y-x + u2\pi)/2)$. Besides, notice that $0\le |(y-x\, (\Mod T))/2| \le  \pi/2$.
Let $\alpha>\beta>7$ be odd integers and let $y\in T\setminus \alpha  F_{}$.
Thus, for each $i$ we have  $y\in T\setminus \alpha F_{\beta^{\prime}_i}$.
If $x\in \beta F_{\beta^{\prime}_i}$, then there is an $I\in \F{\beta^{\prime}_i}$  (that is, $I\in \F{}, |I|>\beta^{\prime}_i$) such that $x\in \beta I$.
This gives $|y-x \, (\Mod T)|>(\alpha-\beta)|I|/2 > (\alpha-\beta)\beta^{\prime}_i/2$.
Thus,

\[
\begin{split}
     &
0\le 1-\sigma_{m_i}1_{\overline{\beta F_{\beta^{\prime}_i}}}(y) \\
     & \le \frac{C}{m_i}\int_{\set{z: z> (\alpha-\beta)\beta^{\prime}_i/2}}\frac{1}{z^2} dz \\
     & \le \frac{C}{\alpha-\beta}\frac{1}{m_i \beta^{\prime}_i}
     = \frac{C}{\alpha-\beta}\frac{1}{m^{\prime}_{j,b}\beta^{\prime}_{j,b}}\\
     & \le \frac{C}{\alpha-\beta}\frac{1}{(j+1)\log^2 (j+1)}.
\end{split}
\]
This gives for  $y\in T\setminus \alpha  F$
\[
\begin{split}
     & A:= \m{y\in T\setminus\alpha F: \sup_{N\in \mathbb{N}}|T_{N, \beta}f(y) - T_{N}f(y)|>\lambda/2}\\
     & \le \mes\Biggl\{y\in T\setminus\alpha F: \sup_N \frac{1}{K^2}\sum_{b=0}^{K_0}\sum_{j=1}^{\lfloor N/K_0\rfloor +1}
     |S_{n^{\prime}_{j,b}}f(y) - V_{n^{\prime}_{j,b}}f(y)|\frac{C}{\alpha-\beta}\\
     & \times \frac{1}{(j+1)\log^2 (j+1)} >\lambda/2\Biggr\}\\
& \le \frac{2}{\lambda}\int_{T\setminus \alpha F}
\sup_N \frac{1}{K^2}\sum_{b=0}^{K_0}\sum_{j=1}^{\infty}
     |S_{n^{\prime}_{j,b}}f(y) - V_{n^{\prime}_{j,b}}f(y)|\frac{C}{\alpha-\beta}\frac{1}{(j+1)\log^2 (j+1)} dy.
\end{split}
\]
Use Corollary \ref{T-gammaFS-V} and the Cauchy-Bunyakovsky-Schwarz inequality. Then for every $l\in \mathbb{N}$
\[
\int_{T\setminus\alpha F}|S_{l}f(y)-V_{l}f(y)| dy \le \sqrt{C\|f\|_1\lambda}
\]
and consequently by $K^2 \le N < (K+1)^2, K_0 \le K^{2\delta}$ we have
\[
\begin{split}
     & A \le
\frac{C}{\lambda(\alpha-\beta)}\sum_{K=1}^{\infty}\frac{1}{K^2}\sum_{b=0}^{\lfloor K^{2\delta}\rfloor}\sum_{j=1}^{\infty}\sqrt{C\|f\|_1\lambda}
\frac{1}{(j+1)\log^2 (j+1)}\\
& \le \frac{C}{(\alpha-\beta)(1-2\delta)}\sqrt{\frac{\|f\|_1}{\lambda}}
\end{split}
\]
since 
$2\delta<1$.
Since $\alpha$ can be any odd integer with $\alpha>\beta$, say $\beta + 2$, then by $\mes (\alpha F)\le \alpha\|f\|_1/\lambda$
the  proof of Lemma \ref{replacement} is complete.
\end{proof}

\medskip\noindent
\textit{Proof of the main theorem (Theorem \ref{main}).}
Basically, we use the notation of Lemma \ref{replacement} and (\ref{njbprime}). 
We prove that the sequence $(n_{(j-1)K_0+b})$ 
is lacunary for any fixed $b < K_0$ as $j$ runs from $1$ in a way that
$(j-1)K_0+b$ 
is less than $(K+1)^2$. This observation follows, from
\[
n_{a + K_0} \ge  \left(1 + \frac{1}{(a+K_0)^{\delta}}\right)^{K_0}n_a
> \left(1 + \frac{1}{(1+K)^{2\delta}}\right)^{K^{2\delta}-1}n_a \ge 2.6 n_a
\]
for every $a\in \mathbb{N}$ with $a + K_0 \le (K+1)^2$
for $K\ge k_{\delta}$ for some fixed $k_{\delta}$ because
\[
\left(1 + \frac{1}{(1+K)^{2\delta}}\right)^{K^{2\delta}-1} \to \exp(1)
\]
as $K\to\infty$. 
Consequently Lemma \ref{orthogonality} can be applied to sequence $(n_{j, b}^{\prime})$. Before this,
apply the well-known inequality between the arithmetic and quadratic means. (Suppose that $N\ge k_{\delta}^2$.)

\[
\begin{split}
& \left|T_{N, \beta}f(y)\right|^2
\le \frac{2}{K^4}
\left|\sum_{i=1}^{K^2}\left( S_{n_i}f-V_{n_i}f\right)  \left(\sigma_{m_i}1_{\overline{\beta F_{\beta^{\prime}_i}}}\right)\right|^2 \\
& +
\frac{2}{K^4}
\left|\sum_{i=K^2+1}^{N}\left( S_{n_i}f-V_{n_i}f\right)  \left(\sigma_{m_i}1_{\overline{\beta F_{\beta^{\prime}_i}}}\right)\right|_2^2
=: A_N + B_N.
\end{split}
\]
Again, by  the inequality between the arithmetic and quadratic means we have
\[
\begin{split}
    & A_N  
     \le \frac{C K_0}{K^4}\sum_{b< K_0}
     \sum_{L=\lfloor K^2/K_0\rfloor}^{\lfloor K^2/K_0\rfloor +1}\left|\sum_{j=1}^{L}
     \left( S_{n^{\prime}_{j, b}}f-V_{n^{\prime}_{j, b}}f\right)  \left(\sigma_{m^{\prime}_{j, b}}1_{\overline{\beta F_{\beta^{\prime}_{j, b}}}}\right)\right|^2 =:A^{1}_K.
\end{split}
\]
Apply Lemma \ref{orthogonality}.
\[
\|A^1_K\|_1 \le C_{\beta}\frac{K_0}{K^4}\sum_{b < K_0}
\frac{K^2}{K_0}\log^5 \left(\frac{K^2}{K_0}+1\right)\|f\|_1\lambda
 \le C_{\beta}\frac{1}{K^{2(1-\delta)}}\log^5 (K+1) \|f\|_1\lambda.
\]

Fix natural numbers $K,N$ such that $K^2\le N < (K+1)^2$, where $K\ge k_{\delta}$.
Apply again the  inequality between the arithmetic and quadratic means.
\[
\begin{split}
    & B_N
 \le \frac{C K_0}{K^4}\sum_{b< K_0}\sum_{L=\lfloor (N-K^2)/K_0\rfloor}^{\lfloor (N-K^2)/K_0\rfloor +1}
     \left|\sum_{j=1}^{L}
     \left( S_{n^{\prime}_{j, b}}f-V_{n^{\prime}_{j, b}}f\right) \left(\sigma_{m^{\prime}_{j, b}}
     1_{\overline{\beta F_{\beta^{\prime}_{j, b}}}}\right)\right|^2.
\end{split}
\]

Apply Lemma \ref{orthogonality} (and the fact that $ N-K^2 < 2K$).
\[
\|B_N\|_1 \le C_{\beta}\frac{K_0}{K^4}\sum_{b < K_0}
\frac{K}{K_0}\log^5 \left(\frac{N}{K_0}+1\right)\|f\|_1\lambda
 \le C_{\beta}\frac{1}{K^{1+2(1-\delta)}}\log^5 (K+1) \|f\|_1\lambda.
\]
Consequently, for  $B_K^1 := \sup_{K^2\le N < (K+1)^2} B_N$  ($k_{\delta}\le K\in \mathbb{N}$ is fixed) 
we have
\[
\begin{split}
\|B^1_K\|_1 \le  \sum_{N=K^2}^{(K+1)^2-1}C_{\beta}\frac{1}{K^{1+2(1-\delta)}}\log^5 (K+1) \|f\|_1\lambda \le
C_{\beta}\frac{1}{K^{2(1-\delta)}}\log^5 (K+1) \|f\|_1\lambda.
\end{split}
\]

This immediately gives   ($\delta<1/2$ is an arbitrarily  fixed number) that
\[
\begin{split}
    & \m{y\in T : \sup_{k^2_{\delta}\le N\in \mathbb{N}}\left|T_{N, \beta}f(y)\right|>\lambda/2} \\
    & \m{y\in T : \sup_{k^2_{\delta}\le N\in \mathbb{N}}\left|T_{N, \beta}f(y)\right|^2 >\lambda^2/4} \\
    & \m{y\in T : \sup_{k^2_{\delta}\le K\in \mathbb{N}} \left(A_K^1 + B_K^1\right) >\lambda^2/4} \\
&
 \le \sum_{K=1}^{\infty}C_{\beta}\frac{1}{K^{2(1-\delta)}}\log^5 (K+1) \frac{\|f\|_1}{\lambda}
 \le C_{\beta,\delta}\frac{\|f\|_1}{\lambda}
\end{split}
\]
because $2(1-\delta)>1$.

This inequality for $\sup_N |T_{N,\beta}|$  and  (``replacement'') Lemma \ref{replacement}  by the fact that $\beta$ can be any odd integer greater than $7$, say $9$, give that
\[
\m{y\in T : \limsup_{N\in \mathbb{N}}|T_{N}f(y)|>\lambda}
\le \m{y\in T : \sup_{k^2_{\delta}\le N\in \mathbb{N}}|T_{N}f(y)|>\lambda} \le C_{\delta}\sqrt{\frac{\|f\|_1}{\lambda}}.
\]
Let $\epsilon$ and $\eta$ be positive reals discussed later.
The set of trigonometric polynomials is dense in $L^1$. Thus, we have a trigonometric polynomial $P$ such that $\|f-P\|_1\le \eta, 2\pi\epsilon$.
Besides, for $P$ we also have that $S_{n_i}P-V_{n_i}P  = 0$ holds for sufficiently large $i$.
Therefore,
\[
\begin{split}
& \m{y\in T : \limsup_{N\in \mathbb{N}}|T_{N}f(y)|>\epsilon} \\
& \le \m{y\in T : \limsup_{N\in \mathbb{N}}|T_{N}(f-P)(y)|  + \limsup_{N\in \mathbb{N}}|T_{N}P(y)| >\epsilon}\\
& = \m{y\in T : \limsup_{N\in \mathbb{N}}|T_{N}(f-P)(y)| >\epsilon} \\
&  \le C_{\delta}\sqrt{\frac{\|f-P\|_1}{\epsilon}} \\
& \le C_{\delta}\sqrt{\frac{\eta}{\epsilon}}
\end{split}
\]
for each $\eta>0$ and consequently
\[
\m{y\in T : \limsup_{N\in \mathbb{N}}|T_{N}f(y)|>\epsilon} = 0
\]
for each $\epsilon>0$.
Thus, by
\[
\set{y\in T : \limsup_{N\in \mathbb{N}}|T_{N}f(y)|>0} \subset \bigcup_{l=1}^{\infty}
\set{y\in T : \limsup_{N\in \mathbb{N}}|T_{N}f(y)|> 1/l}
\]
we proved for each integrable function $f$ the a.e. relation
\[
\lim_{N\to \infty}\frac{1}{N}\sum_{j=1}^{N} \left(S_{n_j}f-V_{n_j}f\right) = 0.
\]
Since for the de la Vall\'ee-Poussin means $V_{n_j}f$  the a.e. relation
\[
\lim_{j\to \infty}V_{n_j}f = f
\]
is well-known, the proof of the main theorem is complete.

\qed

\textbf{Acknowledgement.} The author is deeply indebted to the anonymous referees for finding some errors in the first version of the manuscript and for their valuable help.

\providecommand{\bysame}{\leavevmode\hbox to3em{\hrulefill}\thinspace}
\providecommand{\MR}{\relax\ifhmode\unskip\space\fi MR }
\providecommand{\MRhref}[2]{%
  \href{http://www.ams.org/mathscinet-getitem?mr=#1}{#2}
}
\providecommand{\href}[2]{#2}

\end{document}